  \documentclass[12pt]{amsart}
\usepackage{kotex}
\usepackage{amsmath}
\usepackage{amsfonts}
\usepackage{amscd}

\usepackage{amssymb}
\usepackage{graphicx}
\usepackage{caption}
\usepackage{subcaption}
\usepackage{dsfont}
\usepackage{color}
\usepackage{hyperref}
\usepackage{bbm}
\usepackage{a4wide}
\usepackage{sseq}
\usepackage{tikz-cd}
\usepackage[abs]{overpic}

\usepackage{psfrag}
\usepackage{marvosym}
\usepackage{amssymb}
\usepackage{amsthm}
\usepackage{mathrsfs}
\usepackage{enumitem}
\usepackage{bbm}
 \usepackage{comment}
\usepackage{ytableau}
\usepackage{hyperref}

\usepackage{calligra}
\usepackage{mathrsfs}
\DeclareMathAlphabet{\mathcalligra}{T1}{calligra}{m}{n}
\DeclareFontShape{T1}{calligra}{m}{n}{<->s*[1.5]callig15}{}

\newtheorem{theorem}{Theorem}[section]

\newtheorem{lemma}[theorem]{Lemma}
\newtheorem{proposition}[theorem]{Proposition}

\theoremstyle{definition}
\newtheorem{definition}[theorem]{Definition}

\newtheorem{remark}[theorem]{Remark}

\newtheorem{theorem-definition}[theorem]{Theorem-Definition}

\numberwithin{equation}{section}

\newcommand{\CC} {\mathbb{C}}

\newcommand{\RR} {\mathbb{R}}

\newcommand{\ZZ} {\mathbb{Z}}

\newcommand {\shH} {\mathcal{H}}

\newcommand {\shL} {\mathcal{L}}

\newcommand {\shP} {\mathcal{P}}

\newcommand {\fot}  {\mathfrak{t}}

\renewcommand {\d} {\operatorname{d}}

\newcommand{\sExt}{\mathscr{E} \kern -1pt xt}

\newcommand {\Hom} {\operatorname{Hom}}
\newcommand {\sHom}{\mathscr{H}\kern-5pt\mathcalligra{om}}

\renewcommand {\Im} {\operatorname{Im}}

\renewcommand {\ker } {\operatorname{Ker}}
\newcommand {\Ker} {\operatorname{Ker}}

\newcommand{\Diff}{\operatorname{Diff}}

\newcommand {\rank} {\operatorname{rank}}

\newcommand {\rk} {\operatorname{rk}}

\newcommand {\Span} {\operatorname{Span}}

\newcommand{\sTor}{\mathscr{T} \kern -3pt or}

\newcommand {\vol} {\operatorname{vol}}

\begin{document}

\title[Limits of quantization from mixed to real polarizations on toric varieties]{Limits of quantization from mixed to real polarizations on toric varieties}

\author[Wang]{Dan Wang}
\address{Max Planck Institute for Mathematics, Vivatsgasse, 53111 Bonn, Germany}

\address{Department of Mathematics, The Chinese University of Hong
Kong, Shatin, Hong Kong}
\email{dwang116@link.cuhk.edu.hk}

\author[Yau]{Yutung Yau}
\address{Kavli Institute for the Physics and Mathematics of the Universe (WPI), The University of Tokyo Institutes for Advanced Study, The University of Tokyo, Kashiwa, Chiba 277-8583, Japan}
\email{yu-tung.yau@ipmu.jp}

\thanks{}
\maketitle

\begin{abstract}
Let $(M, \omega, J)$ be a $2n$-dimensional toric variety determined by a Delzant polytope $P$, whose $T^{n}$-symmetry determines a real polarization $\shP_{\RR}$. Let $K \subset T^{n}$ be a subtorus. By a construction due to Leung and the first author, the $K$-action induces a mixed polarization $\shP_{K}$.

This paper investigates the relationship between the quantum Hilbert spaces
\(\mathcal{H}_{K}\) and \(\mathcal{H}_{\RR}\) associated with the polarizations
\(\shP_{K}\) and \(\shP_{\RR}\). Starting from \(\shP_{K}\), we use an
imaginary-time flow to construct a one-parameter family of mixed polarizations
\(\shP_{K,t}\) on \(M\) interpolating between \(\shP_{K}\) and \(\shP_{\RR}\),
with \(\shP_{K,0}=\shP_{K}\) and
\(\lim_{t\to\infty}\shP_{K,t}=\shP_{\RR}\). For the corresponding quantum
Hilbert spaces \(\mathcal{H}_{K,t}\), we lift the imaginary-time flow to the
prequantum line bundle to obtain a \(T^{n}\)-equivariant isomorphism
\(\mathcal{H}_{K}\cong\mathcal{H}_{K,t}\). We finally show that
\(\mathcal{H}_{K,t}\) converges to \(\mathcal{H}_{\RR}\) as \(t\to\infty\).
\end{abstract}


\section{Introduction}

\emph{Geometric quantization} provides a mathematical formulation of the space of quantum states in a quantum mechanical system. 
Let $(M,\omega)$ be a symplectic manifold admitting a prequantum line bundle $(L,\nabla,h)$, where $L\to M$ is a complex line bundle equipped with a Hermitian metric $h$ and a Hermitian connection $\nabla$ whose curvature satisfies
\(
F_{\nabla}=- i \omega.
\)
Given a polarization $\shP\subset TM\otimes \CC$, the associated quantum space is
\[
\shH_{\shP}
=
\{s\in \Gamma(M,L)\mid \nabla_{\xi}s=0,\ \forall \xi\in \Gamma(M,\shP)\}.
\]

A fundamental question in this area is how the quantum space depends on the choice of polarizations. This dependence, particularly for K\"ahler and real polarizations, has been extensively studied in various settings \cite{BFMN, FMMN1, FMMN2, GS3, Hal, HK, KW}. Recently, new results have emerged concerning the existence of mixed polarizations and the structure of quantum spaces associated with mixed polarizations \cite{BFHMN,BHKMN,FHMN,LW2,LW1,LW3,MNP}. This motivates the previous (joint) work of the first author, \cite{W1} and \cite{MNGW} (with Mour\~ao--Nunes--Pereira), on the relationship between K\"ahler polarization and mixed polarizations on toric varieties.

This paper seeks to further explore the relationship between the quantum spaces associated with mixed and real polarizations. Throughout this paper, we work under the following standing assumption, which we denote by $(*)$:

let $(M, \omega, J)$ be a $2n$-dimensional toric variety determined by a Delzant polytope $P$, with associated moment map $\mu: M \twoheadrightarrow P$. Let $K \subset T^n$ be a $k$-dimensional subtorus and $\rho_K: K \to \mathrm{Diff}(M, \omega, J)$ be the subaction with corresponding moment map $\mu_K: M \to \mathfrak{t}_K^*$. We fix a splitting $Q \hookrightarrow T^{n}$ of the quotient torus $Q = T^{n} / K$ and let $\mu_Q$ be the corresponding moment map. Let $\phi: \mathfrak{t}_Q^* \to \mathbb{R}$ be a strictly convex smooth function. Denote by $X_\varphi$ the Hamiltonian vector field associated with $\varphi := \phi \circ \mu_Q$.

Under assumption $(*)$, by  \cite[Theorem 3.1]{LW1}, $M$ admits mixed polarization $\shP_{K}$ (see formula (\ref{def-mixp})) and real polarization $\shP_{\RR}$ (see formula (\ref{def-realp})). To study the relationship between the quantum spaces associated with these two polarizations, we apply the imaginary-time flow method of Mour\~ao and Nunes \cite{MN} to $\shP_K$. This yields a one-parameter family of mixed polarizations $\shP_{K,t}$ (Theorem~1.1), which interpolates between $\shP_K = \shP_{K, 0}$ and $\shP_{\RR}$ (Theorem~1.2).

\begin{theorem}(Theorem\ref{thm1}) Under assumption $(*)$, for any $t\ge0$,
$$\shP_{K,t}:=e^{-it\shL_{X_{\varphi}}}\shP_{K}$$
is a (singular) mixed polarization with $\rk_{\RR}(\shP_{K,t})=\rk_{\RR}(\shP_{K})$.
\end{theorem}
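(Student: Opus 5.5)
Work on the open dense subset $\mathring{M}=\mu^{-1}(\mathring P)$, using action-angle coordinates $(x,\theta)\in \mathring P\times T^n$ adapted to the splitting $T^n\cong K\times Q$. Write $x=(x_K,x_Q)$, $\theta=(\theta_K,\theta_Q)$, with $\omega=\sum dx_j\wedge d\theta_j$. Let $g$ be the symplectic potential, so that $J$ is encoded by the Hessian $G=\mathrm{Hess}\,g$. Recall from \cite[Theorem 3.1]{LW1} that
\[
\shP_K=\Span_\CC\Bigl\{\tfrac{\partial}{\partial\theta_K}\Bigr\}\oplus\bigl(\text{the }Q\text{-part of }T^{0,1}\text{ of the reduced K\"ahler structure}\bigr).
\]
Concretely, on $\mathring M$ this is spanned by $\partial/\partial\theta_a$ ($a\in K$) and vectors of the form $\partial/\partial\theta_b - i\sum_c H_{bc}\,\partial/\partial x_c$ ($b,c\in Q$). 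Here $H$ is a positive definite symmetric matrix in $x$, obtained from $G$ (possibly via a Schur complement), and I will read off the precise form from formula (\ref{def-mixp}).

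The key step is computing the flow. Since $\varphi=\phi(x_Q)$, we have $X_\varphi=\sum_{b\in Q}\phi_b(x_Q)\,\partial/\partial\theta_b$. This field commutes with every $\partial/\partial\theta$, and $[\partial/\partial x_c,X_\varphi]=\sum_b\phi_{bc}\,\partial/\partial\theta_b$, where the coefficients depend only on $x$. Hence $\shL_{X_\varphi}^2$ kills all coordinate fields, and the exponential series terminates. We get
\[
e^{-it\shL_{X_\varphi}}\frac{\partial}{\partial\theta_j}=\frac{\partial}{\partial\theta_j},\qquad e^{-it\shL_{X_\varphi}}\frac{\partial}{\partial x_c}=\frac{\partial}{\partial x_c}+it\sum_{b\in Q}\phi_{bc}\frac{\partial}{\partial\theta_b}.
\]
The sign depends on the convention $\shL_X Y=[X,Y]$, and I will fix it so that the result below comes out positive. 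It follows that $\shP_{K,t}$ is spanned by the $\partial/\partial\theta_a$ together with $\partial/\partial\theta_b-i\sum_c H_{bc}\bigl(\partial/\partial x_c+it\sum_d\phi_{dc}\partial/\partial\theta_d\bigr)$. Moving the $\partial/\partial\theta$ terms to the left,
\[
\sum_d(I+tH\,\mathrm{Hess}\phi)_{bd}\frac{\partial}{\partial\theta_d}-i\sum_c H_{bc}\frac{\partial}{\partial x_c}.
\]
Since $H$ and $\mathrm{Hess}\phi$ are both positive definite, $I+tH\,\mathrm{Hess}\phi$ is invertible. Left-multiplying by its inverse shows that the $Q$-part is spanned by $\partial/\partial\theta_b-i\sum_c (H_t)_{bc}\partial/\partial x_c$, where
\[
H_t=(H^{-1}+t\,\mathrm{Hess}\phi)^{-1}.
\]
This matrix is symmetric and positive definite.

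The polarization properties now follow in the same way as for $\shP_K$, with $H$ replaced by $H_t$. Integrability holds because all coefficients depend only on $x$, and the required symmetry of $H_t$ (it is the inverse of a Hessian-type matrix, $H^{-1}+t\,\mathrm{Hess}\phi$) gives involutivity exactly as in the Kähler case. Lagrangian-ness holds because $\omega(\partial_{\theta_b}-iH_t\partial_x,\partial_{\theta_{b'}}-iH_t\partial_x)$ vanishes by symmetry of $H_t$, and $\omega$ also vanishes on the $\partial/\partial\theta_a$ directions. For the real rank, $\shP_{K,t}\cap\overline{\shP_{K,t}}$ is $\Span\{\partial/\partial\theta_a\}$ because $H_t$ is nondegenerate. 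Therefore $\rk_\RR\shP_{K,t}=k=\rk_\RR\shP_K$, and the positivity $-i\omega(\xi,\bar\xi)\ge0$ needed for a mixed polarization follows from $H_t>0$.

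The main obstacle is the boundary $\mu^{-1}(\partial P)$, where action-angle coordinates break down. This is why the statement says ``(singular)''. I would first define $\shP_{K,t}$ on $\mathring M$ as above and take its closure or extension. Alternatively, I can observe that $e^{-it\shL_{X_\varphi}}$ acts globally, since $X_\varphi$ is a $Q$-generated vector field, smooth on $M$, and its action on the smooth distribution $\shP_K$ is given by a finite series. The resulting distribution may degenerate on lower-dimensional orbits, and checking its behaviour there requires the local Delzant models $\CC^m\times (\CC^*)^{n-m}$. I would verify in those coordinates that the real rank stays $k$ on the open dense set, which is what the rank statement means for a singular polarization.
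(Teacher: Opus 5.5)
Your computation on $\mathring M$ is correct. With the paper's conventions (Lemma \ref{local coordinate lem} gives $w_j^t=w_je^{t\,\partial\varphi/\partial\mu_j}$, i.e.\ $y\mapsto y+t\nabla\phi$), the sign is forced rather than chosen, and it does give $H_t=(H^{-1}+t\,\mathrm{Hess}\,\phi)^{-1}$. With the opposite sign, $I-tH\,\mathrm{Hess}\,\phi$ would become singular for some $t>0$, so this point matters.

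The genuine gap is the singular locus $\mu^{-1}(\partial P)$, which is exactly what ``singular'' refers to. Definition \ref{def4-7} requires two things your interior checks do not give. First, you must specify $(\shP_{K,t})_p$ at every $p\in M$. Second, you must prove that $\shP_{K,t}$ and $\shP_{K,t}+\overline{\shP_{K,t}}$ are involutive for smooth sections over all of $M$. Your interior checks only establish condition (b).

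Neither of your boundary strategies works:
\begin{itemize}
\item $\shP_K$ is not a smooth, or even continuous, distribution along $\mu^{-1}(\partial P)$. Take $\mathbb{P}^1\times\mathbb{P}^1$ with its product toric structure and $K$ the first circle factor. Off $\{z_1=0\}$ one has $\shP_K=\Span\{\partial_{\theta_1},\partial_{\bar z_2}\}$, but on $\{z_1=0\}$ one has $\shP_K=\Span\{\partial_{\bar z_1},\partial_{\bar z_2}\}$. The line $\Span\{\partial_{\theta_1}\}=\Span\{z_1\partial_{z_1}-\bar z_1\partial_{\bar z_1}\}$ has no limit as $z_1\to 0$. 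So ``closure or extension'' is ill-defined, and in any case it would not produce the correct fibre.
\item The termination of your series relied on frames whose coefficients depend only on $x$. Such frames do not exist near $\mu^{-1}(\partial P)$. There, for instance in slice coordinates where $e^{-itX_\varphi}z_k=z_ke^{t(\cdots)}$, the flow is a genuinely infinite series.
\item Checking the real rank on the dense set says nothing about global involutivity.
\end{itemize}

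The missing idea is the paper's identification, valid at every point of $M$ including the singular ones:
\[
\shP_{K,t}=\bigl((\ker d\mu_K\otimes\CC)\cap\shP_t\bigr)\oplus(\Im d\rho_K\otimes\CC),
\]
where $\shP_t$ is the K\"ahler polarization of the flowed toric complex structure $J_t$.

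The paper obtains this from Sjamaar's holomorphic slices (Theorem \ref{thm2-2}). These give equivariant local coordinates $(w,z)$ around any point, in which $X_\varphi$ acts diagonally. The flow can then be computed explicitly (Lemma \ref{local coordinate lem}), and the flowed coordinates are $J_t$-holomorphic. All singular-polarization properties then follow from \cite[Theorem 1.1]{LW1} applied to $(M,\omega,J_t)$ and $K$. That theorem is where the singular points are actually handled.

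Your interior formula is the bridge on $\mathring M$. Let $G_{QQ}$ denote the $Q$--$Q$ block of $\mathrm{Hess}\,g$. From \eqref{def-mixp} one gets $H=(G_{QQ})^{-1}$, so $H_t^{-1}=G_{QQ}+t\,\mathrm{Hess}\,\phi$ is the $Q$--$Q$ block of $\mathrm{Hess}(g+t\,\phi\circ\iota_Q^*)$. Here $g+t\,\phi\circ\iota_Q^*$ is again a symplectic potential in Abreu's sense, so $J_t$ is defined on all of $M$. You would still need a local computation at the singular points, as the paper does with slices, to show that the flow of $\shP_K$ agrees with the right-hand side there.
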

A difficulty in this result lie in demonstrating that the distribution $e^{-it\shL_{X_{\varphi}}}\shP_{K}$ exists on $M$. To resolve this issue,
we employ Sjammar's holomorphic slices theorem (see Theorem \ref{thm2-2}) to construct local model at each point of $M$, which allows us to apply the imaginary-time flow $e^{-it\shL_{X_{\varphi}}}$ to $\shP_{K}$.

\begin{theorem}(Theorem \ref{plim})
Under assumption $(*)$, we have:
$$\lim_{t \rightarrow \infty} \shP_{K,t}=\shP_{\RR}.$$
\end{theorem}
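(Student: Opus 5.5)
Throughout I work in action-angle coordinates. On the open dense set $\mathring M=\mu^{-1}(\mathring P)$ we have coordinates $(x,\theta)\in \mathring P\times T^n$ with $\omega=\sum dx_j\wedge d\theta_j$. The complex structure is given by a symplectic potential $g$, so that $J$ is determined by $G=\operatorname{Hess} g$. Split the coordinates adapted to $T^n\cong K\times Q$ as $x=(x_K,x_Q)$ and $\theta=(\theta_K,\theta_Q)$; then $\mu_Q=x_Q$. By the construction of $\shP_K$ in \cite{LW1}, $\shP_K$ is spanned by the real directions $\partial_{\theta_K}$ together with the complex directions obtained from the Kähler polarization restricted to the $Q$-reduced directions.

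\textbf{Step 1: compute $\shP_{K,t}$ explicitly.} Since $\varphi=\phi(x_Q)$, its Hamiltonian vector field is $X_\varphi=\sum_{a}\partial_a\phi(x_Q)\,\partial_{\theta_{Q,a}}$, which is linear in the angles. The imaginary-time flow therefore acts on the coordinates by $\theta_Q\mapsto \theta_Q - it\nabla\phi(x_Q)$, with $x$ and $\theta_K$ unchanged. Applying this to the generators of $\shP_K$ (as in the proof of Theorem~\ref{thm1}), I expect $\shP_{K,t}$ to be spanned on $\mathring M$ by $\partial_{\theta_K}$ together with
\[
\partial_{\theta_{Q}}-i\,(H_Q+t\operatorname{Hess}\phi)^{-1}\ \text{applied to}\ \partial_{x_Q}\text{-type combinations.}
\]
Here $H_Q$ is the relevant Schur-complement block of $G$, which is positive definite. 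Equivalently, these generators are the vectors $\sum_b\big((H_Q+t\operatorname{Hess}\phi)_{ab}\,\partial_{x_{Q,b}}\big)+i\,\partial_{\theta_{Q,a}}$, possibly with additional $\partial_{x_K}$ correction terms that are independent of $t$. Rescaling by $(H_Q+t\operatorname{Hess}\phi)^{-1}$ turns them into
\[
\partial_{x_{Q,a}} + i\,(H_Q+t\operatorname{Hess}\phi)^{-1}_{ab}\,\partial_{\theta_{Q,b}}.
\]
By strict convexity of $\phi$, this inverse is $O(1/t)$ uniformly on compact sets. So as $t\to\infty$ the frame tends to $\{\partial_{x_Q}\}$. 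Hmm — the generators must be tangent so that the limit is the real polarization $\Span\{\partial_{\theta_K},\partial_{\theta_Q}\}$. I will therefore normalize the opposite way, dividing by $t$ instead: $\tfrac1t(H_Q+t\operatorname{Hess}\phi)\partial_{x_Q}+\tfrac{i}{t}\partial_{\theta_Q}$ is not the right choice. Instead I take the frame $\partial_{\theta_{Q,a}} - i\,(H_Q+t\operatorname{Hess}\phi)^{-1}_{ab}\partial_{x_{Q,b}}$, which converges to $\partial_{\theta_{Q,a}}$. Together with $\partial_{\theta_K}$, the limit is $\shP_\RR|_{\mathring M}=\Span_\CC\{\partial_{\theta_j}\}$. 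Here the limit is taken in the Lagrangian Grassmannian of $T_pM\otimes\CC$, pointwise.

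\textbf{Step 2: boundary points.} The precise notion of $\lim$ used for singular polarizations (as in \cite{BFMN}) is pointwise convergence in the Lagrangian Grassmannian on the dense open set $\mathring M$, together with the requirement that the limit's closure agrees with $\shP_\RR$, which is singular on $\mu^{-1}(\partial P)$. If convergence at the boundary strata is also required, I would use the local models from Sjamaar's holomorphic slices (Theorem~\ref{thm2-2}), as in the proof of Theorem~\ref{thm1}. Near a point of a face, use coordinates in which the collapsing circles give $\CC^m$ factors with polar coordinates, and redo the computation. The key fact is that $\operatorname{Hess}\phi$ remains bounded and positive on the compact image $\mu_Q(M)$, so the $O(1/t)$ estimate is uniform.

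\textbf{Main obstacle.} I expect the main difficulty to be the uniformity near $\partial P$. There $G$ blows up like $1/x_j$ by Guillemin's formula, so $H_Q$ is unbounded. Fortunately this only helps, since $(H_Q+t\operatorname{Hess}\phi)^{-1}\le (t\operatorname{Hess}\phi)^{-1}$ in the operator order. However, in the directions $\partial_{\theta_j}$ that degenerate at the boundary, one must check that the limit of the frame coincides with the (singular) distribution $\shP_\RR$. I would handle this with the Loewner-order bound above, so the estimate $\|(H_Q+t\operatorname{Hess}\phi)^{-1}\|\le C/t$ holds uniformly on $\mathring M$. Convergence of the spanned Lagrangian subspaces then follows.
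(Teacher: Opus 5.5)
Your route differs from the paper's. You compute an explicit frame in action--angle coordinates. The paper instead reads off $\shP_{K,t}=((\ker d\mu_K\otimes\CC)\cap\shP_t)\oplus(\Im d\rho_K\otimes\CC)$ from the proof of Theorem~\ref{thm1}, imports the fibrewise limit of $\shP_t$ at \emph{every} point of $M$ from \cite[Theorem 3.21]{LW1}, and commutes the limit with $\cap(\ker d\mu_K\otimes\CC)$ and $\oplus(\Im d\rho_K\otimes\CC)$ via Lemma~\ref{lem-com}. On $\mathring M$ your final frame is correct, but Step~1 does not derive it. The ``equivalent'' vectors $\sum_b(H_Q+t\operatorname{Hess}\phi)_{ab}\partial_{x_{Q,b}}+i\partial_{\theta_{Q,a}}$ span a different subspace from $\partial_{\theta_{Q,a}}-i(H_Q+t\operatorname{Hess}\phi)^{-1}_{ab}\partial_{x_{Q,b}}$. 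Rescaling a frame never changes its span, hence never changes the limit, so you cannot choose between them according to which limit you want. The actual computation is short. We have $\shP_t=\{a\cdot\partial_x+i(G_ta)\cdot\partial_\theta : a\in\CC^n\}$, where $G_t$ is $G$ with $t\operatorname{Hess}\phi$ added to its $QQ$-block. Intersecting with $\ker dx_K$ and reducing modulo $\partial_{\theta_K}$ gives the generators $\partial_{x_{Q,a}}+i\sum_b(G_{QQ}+t\operatorname{Hess}\phi)_{ab}\partial_{\theta_{Q,b}}$. So $H_Q=G_{QQ}$ (the block itself, not a Schur complement), and there are no $\partial_{x_K}$ terms at all. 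This matters: a $t$-independent $\partial_{x_K}$ term in your final frame would survive the limit.

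The genuine gap is at $\mu^{-1}(\partial P)$. The theorem is an equality of singular distributions fibre by fibre on all of $M$, and at boundary points $\shP_\RR$ is given by \eqref{def-realp}, which is \emph{not} real there. In the slice model at a point $x$ with $z=0$, $d\mu_x$ vanishes on the $\CC^l$-directions, so $(\shP_\RR)_x=\Span_\CC\{\partial_{\theta_1},\dots,\partial_{\theta_{n-l}},\partial_{\bar z_1},\dots,\partial_{\bar z_l}\}$. Any limit of the interior fibres $\Span_\CC\{\partial_{\theta_j}\}$ is conjugation-invariant, since that is a closed condition on the Grassmannian. Hence $(\shP_\RR)_x$ is not a limit of nearby interior fibres. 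Your reformulation ``convergence on $\mathring M$ plus closure agrees with $\shP_\RR$'' is therefore false. The Loewner bound $\|(H_Q+t\operatorname{Hess}\phi)^{-1}\|\le C/t$ is uniform on $\mathring M$, but it is expressed in the frame $\partial_x,\partial_\theta$, which degenerates at $\partial P$. It says nothing about $(\shP_{K,t})_x$ for $x\in\mu^{-1}(\partial P)$, and polar coordinates are unavailable at $z=0$ itself.

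You do name Sjamaar's slices, but the missing content is a direct fibre computation at $x$ in the coordinates $(w^t,z^t)$ from the proof of Theorem~\ref{thm1}. At $z=0$ one has $dz_k^t=e^{tc_k}dz_k$ with $c_k=\sum_\gamma b_{\gamma k}\,\partial\varphi/\partial\mu_\gamma$ real. Meanwhile $dw_j^t$ has no $dz,d\bar z$ components, because $d\mu_x$ kills the $\CC^l$-directions. Hence $\partial/\partial\bar z_k^t$ is a positive multiple of $\partial/\partial\bar z_k$ for all $t$, so the $\CC^l$-block of $(\shP_{K,t})_x$ is constant in $t$. Only the $T^{n-l}_\CC$-block moves. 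There your interior computation, with $\operatorname{Hess}\phi$ restricted to the $Q$-directions acting non-trivially at $x$, gives $\Span_\CC\{\partial_{\theta_1},\dots,\partial_{\theta_{n-l}}\}$ in the limit. Alternatively, argue as the paper does with \cite[Theorem 3.21]{LW1} and Lemma~\ref{lem-com}. The lemma's dimension hypothesis holds because $\shP_{K,t}$ and $\shP_\RR$ both have rank $n$ at every point.
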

The key ingredient in the proof is 
the linear-algebraic Lemma~\ref{lem-com}, which shows that, under suitable constant-dimension assumptions, taking limits of subspaces commutes with both intersection with and addition of  a fixed subspace. Applying this fiberwise, we deduce the limit of  $\shP_{K,t}$ from the known limit of the Kähler polarizations $\shP_t$.

Let $L$ be the $T^{n}$-equivariant prequantum line bundle over $(M,\omega)$ determined by the polytope $P$, let $\hat{\varphi}$ be the quantum operator associated with $\varphi$ (see \eqref{eq2-0-9}), and let $\shH_{K,t}$ be the quantum space associated with the polarization $\shP_{K,t}$. We show that $e^{t\hat{\varphi}}$ can be applied to $\shP_{K}$-polarized distributional sections $\delta_{K}^{m}$ (see formula \ref{dis-basis0}); see Definition~\ref{def-imaginary-flow}. Moreover,
\begin{theorem}(Theorem \ref{imaginary-flow})
Under assumption $(*)$, for any $t >0$,  $e^{t\hat{\varphi}} : \shH_{K,0} \rightarrow \shH_{K,t}$ is a $T^{n}$-equivariant linear isomorphism.
\end{theorem}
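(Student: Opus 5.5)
The plan is to work entirely with explicit bases. By the earlier results of Leung and the first author, $\shH_{K,0}$ is spanned by the distributional sections $\delta_K^m$, indexed by lattice points $m$ in $P$ satisfying the Bohr--Sommerfeld condition for $K$; equivalently, by $m \in P\cap\ZZ^n$. Each $\delta_K^m$ is, on the open dense torus $\mu^{-1}(\mathring P)$, a holomorphic monomial in the $Q$-directions multiplied by a delta function supported on the $K$-Bohr--Sommerfeld fiber $\mu_K^{-1}(m_K)$. Both ingredients are $T^n$-weight vectors of weight $m$.

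We work in action-angle coordinates $(x,\theta)$ on $\mu^{-1}(\mathring P)$. Since $\varphi = \phi\circ\mu_Q$ depends only on $x_Q$, the operator $\hat\varphi = i\nabla_{X_\varphi}+\varphi$ (from \eqref{eq2-0-9}) acts on a section $f(x)e^{i\langle m,\theta\rangle}\mathbbm{1}$ of weight $m$ by a first-order operator in $x$ alone. Its exponential is therefore computed explicitly. It multiplies the $Q$-part by a factor of the form $e^{t(\langle \partial\phi, m_Q - x_Q\rangle+\phi)}$, which amounts to replacing the $Q$-symplectic potential $g_Q$ by $g_Q + t\phi$. It leaves the delta factor in the $K$-directions untouched, because $X_\varphi$ is tangent to $Q$-orbits and commutes with $\mu_K$.

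Following Definition~\ref{def-imaginary-flow}, we set
\[
\delta_{K,t}^m := e^{t\hat\varphi}\delta_K^m .
\]
The proof then proceeds in four steps.

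\textbf{Step 1 (polarized sections).} We show that $\delta_{K,t}^m$ is $\shP_{K,t}$-polarized. Using $\shP_{K,t} = e^{-it\shL_{X_\varphi}}\shP_K$ and the intertwining relation
\[
e^{t\hat\varphi}\nabla_\xi e^{-t\hat\varphi} = \nabla_{e^{-it\shL_{X_\varphi}}\xi},
\]
which holds formally because $[\hat\varphi,\nabla_\xi] = \nabla_{[\ldots]}$ in the usual Mour\~ao--Nunes way, the condition reduces to the flat check on the open torus. There $\shP_{K,t}$ is spanned by the $K$-directions $\partial_{\theta_K}$ and by the $Q$-holomorphic vectors for the potential $g_Q+t\phi$, as in Theorem~\ref{thm1}.

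\textbf{Step 2 (extension over the boundary).} We must extend the identity across $M\setminus\mu^{-1}(\mathring P)$. We expect this to be the main obstacle. I would first use the local models from Sjamaar's holomorphic slice theorem (Theorem~\ref{thm2-2}) that were used to build $\shP_{K,t}$. In such a chart, $\delta_{K,t}^m$ is a smooth $Q$-part (a monomial times $e^{-t\phi}$-type weight, still smooth since $\phi$ is smooth on $\mathfrak t_Q^*$) tensored with a delta distribution in the $K$-normal directions. The polarization condition is closed, so it extends distributionally from the dense set.

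\textbf{Step 3 (spanning).} We show that $\{\delta_{K,t}^m\}$ spans $\shH_{K,t}$. Take any $\shP_{K,t}$-polarized distributional section $s$ and decompose it into $T^n$-weight components. The $K$-directions in $\shP_{K,t}$ force each weight-$m$ component to be supported on $\mu_K^{-1}(m_K)$, with only $m$ allowed to be a lattice point in $P$. The $Q$-holomorphicity, an ODE in $x_Q$ for a fixed weight, then forces the component to be a multiple of $\delta_{K,t}^m$. This is the same argument that gives the basis of $\shH_{K,0}$, run with potential $g_Q+t\phi$.

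\textbf{Step 4 (isomorphism and equivariance).} Linear independence is immediate because the sections have distinct weights, and none vanishes since the multiplying factor is nowhere zero. Hence $e^{t\hat\varphi}$ maps a basis to a basis and is an isomorphism. $T^n$-equivariance follows because $\varphi$ is $T^n$-invariant, so $X_\varphi$ commutes with the torus action and $\hat\varphi$ commutes with the lifted $T^n$-action on $L$. Equivalently, $e^{t\hat\varphi}$ preserves each weight $m$.
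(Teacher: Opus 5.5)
Your Step~2 has a genuine gap. You argue that the polarization condition is closed and therefore extends distributionally from the dense set $\mu^{-1}(\mathring P)$, but this is false for distributional sections. If $\nabla_\xi\delta$ vanishes on a dense open set, you only know it is a distribution supported on the complement, and such a distribution need not be zero.

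Here this is not a technicality. Whenever $P^m=P\cap\{x:\iota_K^*(x)=\iota_K^*(m)\}$ lies in $\partial P$, the entire support $M^m=\mu^{-1}(P^m)$ of $\delta^m_K$ lies in the toric boundary. Then $\delta^m_{K,t}$ restricts to zero on $\mu^{-1}(\mathring P)$, so your Step~1 computation says nothing about it. For example, take $P=[0,1]^2$, $K$ the first circle factor and $m=(0,0)$. Then $P^m=\{0\}\times[0,1]$ and $M^m=\{\mathrm{pt}\}\times\mathbb{P}^1$. Your description of $\delta^m_K$ as ``a monomial times a delta function on the open torus'' is vacuous for all such $m$. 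Passing to Sjamaar's slice charts does not help while the argument still ends by appealing to density. (Minor: \eqref{eq2-0-9} has $-i\nabla_{X_\varphi}$, not $+i\nabla_{X_\varphi}$. With the weight convention of Lemma~\ref{lem-img}, your sign would flip the sign of $\nabla\phi\cdot(x_Q-m_Q)$ in the exponent.)

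The missing idea is one you only use in Step~3. By the proof of Theorem~\ref{thm1}, $\shP_{K,t}=\bigl((\Ker d\mu_K\otimes\CC)\cap\shP_t\bigr)\oplus(\Im d\rho_K\otimes\CC)$, where $J_t$ is again a toric K\"ahler structure compatible with $\omega$ (this is your observation that the potential is shifted by $t\phi$). Hence \cite[Theorem~3.2]{W1} applies verbatim to $J_t$. It gives at once, including over the toric boundary, that the functionals $\tau\mapsto\int_{\check M^m}\langle\sigma^m_t,\tau\rangle\vol^m_K$ lie in $\shH_{K,t}$ and form a basis. Here $\sigma^m_t$ denotes the $J_t$-holomorphic weight-$m$ sections.

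The paper therefore only needs two further ingredients:
\begin{itemize}
\item The identity $(e^{t\hat\varphi}\delta^m_K)(\tau)=\int_{\check M^m}\langle e^{t\hat\varphi}\sigma^m,\tau\rangle\vol^m_K$. It is obtained by transposing $\nabla_{X_\varphi}$ onto $\sigma^m$ and summing the series via Lemma~\ref{lem-img}. The transposition is legitimate because $X_\varphi$ is tangent to $M^m$ and divergence-free for $\vol^m_K$; this is your remark that the $K$-delta factor is untouched.
\item \cite[Theorem~3.7]{LW3}, which identifies $e^{t\hat\varphi}\sigma^m$ with $\sigma^m_t$.
\end{itemize}

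With this, your Steps~1--3 become unnecessary, including the formal intertwining identity, whose meaning on distributions you never justify. Your Step~4 then goes through as written.
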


For each integral point $m\in P\cap\mathfrak t_{\mathbb Z}^{*}$, let
$\delta^m\in\Gamma_c(M,L^{-1})'$ be the canonical basis element of
quantum space $\shH_{\mathbb R}$ associated with $\shP_{\RR}$ supported on the Bohr--Sommerfeld fiber
$\mu^{-1}(m)$, as defined in \eqref{eqn:delta^m(tau)}.
Let
$\delta_{K,t}^{m}\in\Gamma_c(M,L^{-1})'$ be the canonical basis element
of $\shH_{K,t}$, obtained by imaginary-time flow,
$ 
\delta_{K,t}^{m}=e^{t\widehat{\varphi}}\delta_K^m,
$ 
and described explicitly in \eqref{dis-basis}.

The following result relates the quantum spaces $\shH_{K}(=\shH_{K,0})$ and $\shH_{\mathbb{R}}$ by showing that, after a natural normalization, the mixed-polarized basis element $\delta_{K,t}^{m}$ converges to the real-polarized basis element $\delta^{m}$ as $t \to \infty$.

\begin{theorem}(Theorem \ref{thm-hlim})
Under assumption $(*)$, for any $m \in P \cap \fot^{*}_{\ZZ}$, the family of
distributional sections $\{C_{t}^{m}\delta_{K,t}^{m}\}_{t>0}\subset
\Gamma_{c}(M,L^{-1})'$ converges weakly to $C^{m}\delta^{m}$, that is: for
every test section $\tau\in \Gamma_{c}(M,L^{-1})$, one has
\begin{equation}\label{eq:weakconv}
\lim_{t\to\infty} C_{t}^{m}\delta_{K,t}^{m}(\tau)= C^{m}\delta^{m}(\tau).
\end{equation}
Here, $C_{t}^{m}$ is as in \eqref{eq-famconst}, and $C^{m}$ is the constant
defined by $C^{m}=|\sigma^{m}|(m)$.
\end{theorem}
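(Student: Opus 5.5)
The plan is to reduce the statement to a Laplace-type asymptotic along the fibre $\mu_K^{-1}(m_K)$, where $m_K$ denotes the image of $m$ under the projection $\fot^*\to\fot_K^*$ dual to $K\hookrightarrow T^n$. I would start from the explicit formula \eqref{dis-basis} for $\delta_{K,t}^m=e^{t\widehat\varphi}\delta_K^m$. On the open dense subset $\mathring M=\mu^{-1}(\mathring P)$ we use action--angle coordinates $(x,\theta)$ with $x=\mu$, and split $x=(x_K,x_Q)$ according to the chosen splitting $Q\hookrightarrow T^n$. The section $\delta_K^m$ is supported on $\mu_K^{-1}(m_K)$ and is given there by pairing a test section with the holomorphic section $\sigma^m$ twisted by the $K$-character. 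Applying $e^{t\widehat\varphi}$ multiplies the integrand by a factor of the form
\[
e^{-t\,\psi_m(x_Q)},\qquad \psi_m(x_Q)=\phi(x_Q)-\phi(m_Q)-\langle x_Q-m_Q,\nabla\phi(m_Q)\rangle ,
\]
possibly together with a unimodular phase that is independent of $x_Q$ on the support. Since $\phi$ is strictly convex, $\psi_m\ge 0$ and it vanishes only at $x_Q=m_Q$. So for every $\tau\in\Gamma_c(M,L^{-1})$ I expect
\[
\delta_{K,t}^m(\tau)=\int_{\mu_K^{-1}(m_K)} e^{-t\psi_m(x_Q)}\,\langle\sigma^m,\tau\rangle\, d\mathrm{vol},
\]
and the first step is to confirm this exact form from \eqref{dis-basis}, including the induced volume form on the fibre, which in coordinates is $dx_Q\,d\theta$.

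Next I would apply Laplace's method in the $x_Q$-variables, with $\theta$ and the remaining data as parameters. Integrating over $\theta$ first, for $m$ in the interior of $P$ the integrand becomes a compactly supported smooth function $F(x_Q)$, and
\[
\int e^{-t\psi_m}F\,dx_Q=\Big(\tfrac{2\pi}{t}\Big)^{(n-k)/2}\det\big(\operatorname{Hess}\phi(m_Q)\big)^{-1/2}\big(F(m_Q)+o(1)\big).
\]
The normalizing constant $C_t^m$ in \eqref{eq-famconst} should be exactly the inverse of this prefactor, possibly up to a factor of $|\sigma^m|$, which is why $C^m=|\sigma^m|(m)$ appears. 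Evaluating $F(m_Q)$ gives the integral of $\langle\sigma^m,\tau\rangle$ over the Bohr--Sommerfeld torus $\mu^{-1}(m)$. Writing $\sigma^m|_{\mu^{-1}(m)}=|\sigma^m|(m)\cdot(\text{flat unit section})$ and comparing with \eqref{eqn:delta^m(tau)} should then yield $C^m\delta^m(\tau)$. Away from $x_Q=m_Q$ the integrand is bounded by $e^{-t\epsilon}$ times a bounded quantity, so the tails are negligible after multiplying by $C_t^m\sim t^{(n-k)/2}$.

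I expect the main obstacle to be the case where $m$ lies on $\partial P$, or where the support of $\tau$ meets $\mu^{-1}(\partial P)$. There the action--angle coordinates degenerate and $\mu_K^{-1}(m_K)$ meets lower-dimensional orbits. I would first use a partition of unity to reduce to $\tau$ supported in a single local model, either the holomorphic slice charts from Theorem~\ref{thm2-2} or the standard Delzant chart $\CC^{l}\times(\CC^*)^{n-l}$ near the relevant face. In such a chart, $|\sigma^m|^2$ and the fibre volume are written in the coordinates $|z_j|^2$, which are smooth up to the boundary, so that the $x_Q$-integral becomes a Laplace integral over a cornered domain $\{x_Q:\ x\in P\}$.

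If $m_Q$ is a boundary point of this domain, the minimum of $\psi_m$ sits at a corner. I would handle this by noting that the measure $\langle\sigma^m,\tau\rangle\,dx_Q\,d\theta$ behaves near the corner like $\prod_j|z_j|^{2m_j}$, which vanishes or is integrable in the appropriate way. Rather than redo the asymptotics, I would show that $C_t^m$ as defined in \eqref{eq-famconst} is precisely $\big(\int e^{-t\psi_m}|\sigma^m|\,d\mathrm{vol}\big)^{-1}$, restricted to the relevant directions. The limit statement then becomes an approximate-identity argument: $C_t^m e^{-t\psi_m}|\sigma^m|\,dx_Q$ is a family of probability measures concentrating at $m_Q$, and the remaining continuous factor converges to its value at $m$. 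This argument works uniformly for interior and boundary $m$, so I would structure the final write-up around it, with Laplace's method used only to check that the measures concentrate.
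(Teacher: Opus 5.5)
Your architecture matches the paper's. By Lemma \ref{lem-img} and the decomposition $\vol_K^m|_{\mathring{M}^m}=\vol_{\widetilde T}\wedge\mu^*\vol_{P^m}$ of Proposition \ref{decomp-vol}, $\delta^m_{K,t}(\tau)$ becomes $\int_{P^m}e^{t(\cdots)\circ\iota_Q^*}\bigl(\int_\mu\langle\sigma^m,\tau\rangle\bigr)\vol_{P^m}$, and a Dirac-sequence argument on the polytope $P^m$ (Lemma \ref{conv-refine}) evaluates the limit at $m$.

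However, the normalization step you plan to build the final write-up on is false, and it would produce the wrong constant. $C_t^m$ is not $\bigl(\int e^{-t\psi_m}|\sigma^m|\,d\mathrm{vol}\bigr)^{-1}$. It is the reciprocal of the $L^1(\vol_K^m)$-norm of the bare weight $e^{t\mu_Q^*(\phi-\nabla\phi\cdot(x-\iota_Q^*(m)))}$ by which $e^{t\hat\varphi}$ multiplies $\sigma^m$ (read \eqref{eq-famconst} with this sign, as the paper's proof does). That weight is $T^n$-invariant and each $\vol_{\mu^{-1}(x)}$ has total mass $1$, so $(C_t^m)^{-1}=\int_{P^m}e^{t(\cdots)\circ\iota_Q^*}\vol_{P^m}$, with no $|\sigma^m|$ anywhere. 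If you put $|\sigma^m|$ into your probability measures, the limit would be $\bigl(\int_\mu\langle\sigma^m,\tau\rangle\bigr)(m)/|\sigma^m|(m)=\delta^m(\tau)$. That is missing exactly the factor $C^m$: the two normalizations differ asymptotically by $|\sigma^m|(m)$. The constant $C^m$ does not come from $C_t^m$. It comes from evaluating the unnormalized factor $\int_\mu\langle\sigma^m,\tau\rangle$ at $m$, which by Lemma \ref{lem-fib-int} equals $C_{\sigma^m}|\sigma^m|(m)\delta^m(\tau)$. You also need the rescaling \eqref{rescale} to force $C_{\sigma^m}=1$; your ``flat unit section'' comparison leaves that phase undetermined.

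Smaller corrections follow.

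\begin{itemize}
\item By Lemma \ref{lem-img} the weight is $e^{t\phi(\iota_Q^*(m))}e^{-t\beta_m(x)}$ with $\beta_m(x)=\phi(\iota_Q^*(m))-\phi(x)-\nabla\phi(x)\cdot(\iota_Q^*(m)-x)$. So the gradient is evaluated at $x$, not at $\iota_Q^*(m)$, and there is no phase. This is harmless: the constant cancels against $C_t^m$, and strict convexity still gives $\beta_m\ge 0$ with equality only at $\iota_Q^*(m)$.
\item Your explicit Laplace prefactor needs a nondegenerate Hessian, which strict convexity does not provide. Also, for $m\in\partial P$, $\dim P^m$ can drop below $n-k$, even to a point. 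The approximate-identity argument on $P^m$ inside its affine hull needs only a unique maximum at $m$ and $\vol(P^m\cap B_r(m))>0$ for $r>0$, so dropping Laplace's method is the right call.
\item The chart-by-chart analysis near $\mu^{-1}(\partial P)$ is unnecessary. Lemma \ref{lem-fib-int} gives continuity of $x\mapsto\int_{\mu^{-1}(x)}\langle\sigma^m,\tau\rangle\,\vol_{\mu^{-1}(x)}$ on all of $P$ at once, via the continuous map $P\times T^n\to M$. After that, corners need no special treatment.
\end{itemize}
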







\section{Acknowledgements}
We would like to thank Qingyuan Jiang and Ki Fung Chan for their helpful discussions.
D. Wang is grateful to the Max Planck Institute for Mathematics (MPIM) in Bonn for their hospitality and financial support, where part of this work was carried out. The work of D. Wang was partially supported by a grant from the Research Grants Council of the Hong Kong Special Administrative Region, China (Project No. CUHK 2130983).
The work of Y. Yau was supported by World Premier International Research Center Initiative (WPI), MEXT, Japan.

{\color{blue}} 

\section{Preliminaries}
\subsection{Toric K\"ahler geometry}

Let $P$ be a Delzant polytope, and let $(M,\omega_P)$ be the associated symplectic toric manifold with moment map $\mu$. The points of $M$ where the torus action is free form a dense open subset $\mathring{M} = \mu^{-1}(\mathring{P})$, where $\mathring{P}$ is the interior of $P$. Moreover, $\mathring{M} \cong \mathring{P} \times T^n$ as symplectic manifolds, with the latter endowed with the symplectic structure inherited from $\mathbb{R}^n \times T^n$. Toric Kähler structures can be described with the aid of {\em symplectic potentials} \cite{Gui2, Ab2}. 
In the action-angle coordinates $(x,\theta)$ on $\mathring{P} \times T^n$, $P$ is given by the linear inequalities
\begin{equation}
\label{eqn:l_r}
l_r(x) = \langle x, \nu_r\rangle +\lambda_r\geq 0, \quad r=1\, \dots, d,
\end{equation}
where $\nu_r$ is the primitive inward pointing normal to the $r$th facet of $P$. 
The function
\begin{align}\label{po2-0}
    g_P(x) = \frac{1}{2}\sum_{r=1}^d l_r(x)\log l_r(x),
\end{align}
defined for $x \in \mathring{P}$, endows $M$ with a torus-invariant compatible complex structure $J_P$ given by the block matrix
\begin{align}\nonumber
    J_P =
    \begin{bmatrix}
    0 & -G_P^{-1} \\
    G_P & 0
    \end{bmatrix},
\end{align}
where $G_P$ is the Hessian of $g_P$ \cite{Gui2}. As shown by Abreu in \cite{Ab2}, this is not the only compatible toric complex structure. More precisely,
\begin{theorem}[\cite{Ab2}]
Let $(M,\omega)$ be the symplectic toric manifold associated with a Delzant polytope $P$. If $J$ is a compatible toric complex structure, then
\begin{align}\label{com2-3}
    J =
    \begin{bmatrix}
    0 & -G^{-1} \\
    G & 0
    \end{bmatrix},
\end{align}
where $G$ is the Hessian of
\begin{align}\label{po2-1}
    g = g_P + h,
\end{align}
with $g_P$ being defined as in \ref{po2-0} and $h \in C^\infty(P)$. Moreover, $G$ is positive-definite on $\mathring{P}$ and satisfies the regularity condition
\begin{align}\nonumber
    \det G = \left(\delta(x)\prod_{r=1}^d l_r(x)\right)^{-1},
\end{align}
where $\delta$ is a smooth and strictly positive function on $P$.

Conversely, given a function $g$ as in \ref{po2-1} satisfying the above conditions, $J$ as defined in \ref{com2-3} is a compatible toric complex structure on $(M,\omega)$.
\end{theorem}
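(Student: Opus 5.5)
We will prove Abreu's theorem by working on the open dense orbit $\mathring{M}\cong\mathring{P}\times T^n$ and then controlling the behaviour at the boundary of $P$.

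\emph{Step 1: the form of $J$ on $\mathring{M}$.} Since $J$ is $T^n$-invariant and $\omega$-compatible, $J$ is determined by its action on the coordinate fields $\partial_{x}$, $\partial_{\theta}$, with coefficients depending only on $x$. The fields $\partial_\theta$ span a Lagrangian distribution, and the tangent spaces to the orbits are isotropic. Integrability and invariance then show that $J\partial_\theta$ commutes with all $\partial_{\theta_j}$ and is a gradient-like field. The key point is the following: the complexified torus $(\CC^*)^n$ acts holomorphically on $(\mathring{M},J)$, freely and transitively. This gives holomorphic coordinates $y+i\theta$, where $y=y(x)$ is a diffeomorphism from $\mathring{P}$ onto $\RR^n$. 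The Kähler form in these coordinates has a $T^n$-invariant potential $f(y)$ with $x=\partial f/\partial y$. The Legendre transform $g$ of $f$ then satisfies $y=\partial g/\partial x$. In the coordinates $(x,\theta)$ this yields $J\partial_{\theta}=G\,\partial_x$ up to sign convention, i.e. the block form \eqref{com2-3}. Compatibility with $\omega$ (positivity of the metric $G\,dx^2+G^{-1}d\theta^2$) gives that $G$ is positive definite on $\mathring{P}$.

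\emph{Step 2: the boundary behaviour.} Set $h:=g-g_P$. This is smooth on $\mathring{P}$, and we must show that it extends smoothly to $P$ and that the determinant condition holds. We do this locally near each face using the Delzant condition. Near a point of $\mu^{-1}(F)$, with $F$ a face of codimension $k$, the equivariant Darboux theorem gives a model $\CC^k\times(\CC^*)^{n-k}$. Since $J$ is smooth there, $\omega$ has a smooth $T^n$-invariant Kähler potential in the holomorphic coordinates $z_j=e^{y_j+i\theta_j}$, $j\le k$. Invariant smooth functions are smooth functions of $|z_j|^2$ and of $y_{j}$ for $j>k$. One then compares the Legendre transform of such an $f$ with that of the standard model $\sum|z_j|^2$, whose symplectic potential is exactly $\tfrac12\sum l_j\log l_j$. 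The difference is smooth in $x$, by an implicit function argument applied to $x_j=|z_j|^2\,\partial_{|z_j|^2}f$. Consequently
\[
\det G=\Big(\prod_{j\le k} l_j\Big)^{-1}\cdot(\text{smooth positive}),
\]
and patching these local statements gives $\det G=(\delta\prod_r l_r)^{-1}$ with $\delta>0$ smooth on $P$.

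\emph{Step 3: the converse.} Given $g=g_P+h$ satisfying the conditions, define $J$ by \eqref{com2-3} on $\mathring{M}$. Integrability follows because $G$ is a Hessian: the forms $dy+i\,d\theta$ with $y=\partial g/\partial x$ are closed and of type $(1,0)$. Compatibility follows from the positivity of $G$. Smooth extension across $\mu^{-1}(\partial P)$ is obtained by reversing Step 2. The determinant condition ensures that the singular part of $G$ is exactly that of $G_P$, and $J_P$ is known to extend, being the structure obtained from Kähler reduction of $\CC^d$ (Guillemin).

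The main obstacle is Step 2 together with its converse, namely the precise smoothness analysis at the boundary strata. We will handle this through the local models $\CC^k\times(\CC^*)^{n-k}$ and the Legendre duality between smooth invariant potentials and symplectic potentials.
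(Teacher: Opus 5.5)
The paper does not prove this theorem; it quotes it from Abreu, so your proposal has to be judged on its own. The overall architecture is sound: the $(\CC^*)^n$-action on $\mathring{M}$, Legendre duality, and a local analysis at faces. The core computation of Step 2 is also right. With $s_j=|z_j|^2$ and, up to normalization, $x_j=2s_j\,\partial_{s_j}F$, positivity of $\partial_{s_j}F$ at $s=0$ and of $\partial^2F/\partial y''\partial y''$ makes $(s,y'')\mapsto x$ a local diffeomorphism. Hence $s_j=x_ja_j(x)$ with $a_j>0$ smooth, $y_j-\tfrac12\log x_j$ is smooth, and both the smoothness of $h$ and the form of $\det G$ follow.

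Two inputs in Step 2 are misattributed, though both are repairable. First, the equivariant Darboux theorem is symplectic and gives no holomorphic coordinates. You need a $T^n$-equivariant \emph{holomorphic} chart near $\mu^{-1}(\mathring F)$, for instance from Bochner linearization of the stabilizer or Sjamaar's slice theorem (Theorem~\ref{thm2-2}). You also need the remark that a holomorphic $T^n$-eigenfunction on the open orbit is locally $c\,w^{m}$, so that $|z_j|^2=|c_j|^2e^{2\langle m_j,y\rangle}$ in your global $y$. Second, the smooth invariant local potential does not follow from smoothness of $J$. It needs exactness of $\omega$ on an invariant neighbourhood retracting onto the isotropic orbit, a local $\partial\bar\partial$-lemma, and averaging.

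The genuine gap is in the converse. Step 3 says the determinant condition makes the singular part of $G$ equal to that of $G_P$, and that extension of $J_P$ then gives extension of $J$. Neither half is right.
\begin{itemize}
\item The singular parts agree because $G-G_P=\operatorname{Hess}(h)$ is smooth. That uses only the smoothness of $h$, not the determinant condition.
\item Knowing that $J_P$ extends says nothing about $J$. Smoothness on $M$ cannot be read off from coefficients in the coordinates $(x,\theta)$, which are singular along $\mu^{-1}(\partial P)$. Moreover $J\partial_\theta=-G^{-1}\partial_x$, so it is $G^{-1}$, not $G$, that must be controlled.
\end{itemize}

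What the determinant condition actually encodes is tangential nondegeneracy. Use coordinates adapted to a face $F$, so that $l_j=x_j$ for $j\le k$ (possible by the Delzant condition), and write $g=\tfrac12\sum_{j\le k}x_j\log x_j+\tilde h$ with $\tilde h$ smooth. Then $\big(\prod_{j\le k}2x_j\big)\det G$ extends smoothly and equals $\det\big(\partial^2\tilde h/\partial x''\partial x''\big)$ on $\{x'=0\}$. So the condition says exactly that $g|_{\mathring F}$ is strictly convex. This is not implied by the other hypotheses. Near an edge $\{x_1=0\}$ of a polygon, take $\tilde h=x_2^4+x_1x_2^2$. Then $G>0$ for small $x_1>0$ and $G$ has the same singular part as $G_P$. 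Yet $(G^{-1})_{22}=(12x_2^2+2x_1-8x_1x_2^2)^{-1}$ is unbounded near the origin, so $J\partial_{\theta_2}$ does not extend.

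The missing step is to genuinely reverse Step 2, with the condition used in the right place. Work in the Delzant chart $u_j=\sqrt{2x_j}\,e^{i\theta_j}$, which defines the smooth structure of $M$ near the face. The candidate holomorphic coordinates are $w_j=e^{y_j+i\theta_j}=\tfrac{1}{\sqrt2}u_j\,e^{1/2+\partial_j\tilde h}$ for $j\le k$, together with $y_m=\partial_m\tilde h$ for $m>k$. These are smooth because $h$ is. At $u=0$ the Jacobian of $(u,x'',\theta'')\mapsto(w',y'',\theta'')$ is block diagonal, with blocks given by positive scalars and by $\partial^2\tilde h/\partial x''\partial x''$. So this map is a local diffeomorphism precisely when the determinant condition holds. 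Then $J$ is the pull-back of the standard complex structure, hence extends smoothly and is integrable. Finally, $\omega(\cdot,J\cdot)$ is positive definite on $\mu^{-1}(\partial P)$ by continuity together with nondegeneracy.
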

A function $g$ satisfying these properties is called a \emph{symplectic potential}.

Let $J$ be a compatible complex structure induced by a symplectic potential $g$ on $(M,\omega)$. Then $(\mathring{M},J) \cong \mathring{P} \times T^n$ as Kähler manifolds, where the complex structure on the latter is induced from $\mathbb{C}^n$ via the  biholomorphism
\begin{align}
    \mathring{P} \times T^n &\xrightarrow{\cong} (\mathbb{C}^*)^n \\
    (x,\theta) &\mapsto w = (e^{y_1+i\theta_1}, \dots, e^{y_n+i\theta_n}),
\end{align}
where $y_j = \partial g/\partial x^j$. The assignment $x \mapsto y = \partial g/\partial x$ is an invertible Legendre transform, with inverse given by $x = \partial h/\partial y$, where $h = x(y)\cdot y - g(x(y))$ is a Kähler potential given in terms of $g$. 

Associated with $P$ there is the line bundle $L = \mathcal{O}(D) \to M$, where
\begin{align}\nonumber
D = -\sum_{r=1}^d \lambda_rD_r,
\end{align}
with each $\lambda_r$ a non-negative integer, and $D_r$ the divisor given by the inverse image under the moment map of the facet of $P$ defined by $l_r = 0$. With this in mind, the meromorphic section $\sigma_D$ of $L$, whose divisor is $D$, trivializes $L$ over $\mathring{M}$. Under this trivialization, the divisor of the meromorphic function whose expression in the dense open subset is given by
\begin{align}\label{eq:monomials}
    w^m = w_1^{m_1}\cdots w_n^{m_n}, \quad m = (m_1, \dots, m_n) \in \mathbb{Z}^n,
\end{align}
satisfies
\begin{align}\nonumber
    \mathrm{div}(w^m) = \sum_{r=1}^d \langle m, \nu_r \rangle D_r.
\end{align}
Holomorphic sections of $L$ are generated by the sections which, under the trivialization $\sigma_D$, are given by monomials as in \eqref{eq:monomials} with effective divisors, i.e.
\begin{align}
    H^0(M,L) = \Span_{\CC}\{w^m \sigma_D : m \in \mathbb{Z}^n, \mathrm{div}(w^m\sigma_D) \geq 0\},
\end{align}
and $\mathrm{div}(w^m\sigma_D) \geq 0$ precisely when $l_r(m) = \langle m, \nu_r \rangle - \lambda_r \geq 0$ for $r = 1, \dots, d$. Consequently, the assignment $m \mapsto w^m \sigma_D$ defines a bijection between the set of integral points of $P$ and a toric basis of $H^0(M,L)$.

\subsection{Polarizations on symplectic manifolds}

A step in the process of geometric quantization is to choose a polarization.
We first recall the definitions of distributions and polarizations on
symplectic manifolds $(M,\omega )$ (See \cite{Wo}).

\begin{definition}
\label{def4-1}A \emph{complex distribution} $\mathcal{P}$ on a manifold $M$ 
is a complex sub-bundle of the complexified tangent bundle $TM\otimes 
\mathbb{C}$. 
When $\left( M,\omega \right) $ is a $2n$-dimensional symplectic manifold, such a $\mathcal{P}
$ is called a \label{def4-2}  \emph{polarization} if it satisfies the
following conditions:
\begin{enumerate}
\item  $\mathrm{rk}_{\RR}\left( \mathcal{P}\right):=\mathrm{rank}(\mathcal{P}\cap \overline{
\mathcal{P}}\cap TM)$ is constant;
\item $\mathcal{P}$ and $\mathcal{P}+\bar{\mathcal{P}}$ are involutive; and
\item for every $x \in M$, $\mathcal{P}_{x} \subset T_{x}M \otimes {
\mathbb{C}}$ is Lagrangian.
\end{enumerate}
Furthermore, $\shP$ is called
\begin{enumerate}
\item  [$\cdot$] a {\em real polarization} if $\mathcal{P}=\overline{\mathcal{P}}$,
i.e. $\mathrm{rk}_{\RR}\left( \mathcal{P}\right) = n$; 
\item  [$\cdot$] a {\em K\"ahler polarization} if $\mathcal{P}\cap \overline{
\mathcal{P}}=0$, i.e. $\mathrm{rk}_{\RR}\left( \mathcal{P}\right) =0$; 
\item [$\cdot$] a {\em mixed polarization} if 
$0<\mathrm{rk}_{\RR}\left( \mathcal{P}\right) < n$. 
\end{enumerate}
\end{definition}

It is easy to see that there exist K\"ahler polarization and singular real polarization defined by moment map on toric varieties. To study singular polarizations $\shP_{\mathrm{mix}}$ on toric manifolds, we recall the definitions of singular polarizations and smooth section of singular polarizations as follows (refer to section 3.1 of \cite{LW1} or Appendix of \cite{LW3}). 
\begin{definition}\label{def4-3}
A {\em singular complex distribution} on $M$ is a subset $\shP \subset  TM \otimes\CC$ which satisfies:
$\shP_{p} $ is a vector subspace of $ T_{p}M \otimes \CC$, for all points $p \in M$.
\end{definition}

\begin{definition}\label{def4-4}
Let $\shP$ be a singular complex distribution on $M$. For any open subset $U$ of $M$, {\em the space of smooth sections of $\shP$ on $U$} is defined by the smooth section of $TM\otimes \CC$ with value in $\shP$, that is,
$$ \Gamma(U, \shP) = \{ v \in \Gamma(U, TM \otimes \CC) \mid  v_{p} \in \shP_{p}, \forall  p\in U \}.$$
\end{definition}

\begin{remark} A singular polarization studied in this paper refers to a polarization with mild singularity in the sense that it is only singular outside an open dense subset $\check{M}\subset M$. Under our setting, we review the definition of  involutive distributions as follows.
\end{remark}

\begin{definition}\label{def4-5}
A singular complex distribution $\shP$ on $M$ is {\em involutive} if it satisfies
$$[u,v] \in \Gamma(M,\shP), \mathrm{~for~ any~} u,v \in \Gamma(M, \shP).$$
\end{definition}

\begin{definition}\label{def4-7} Let $(M, \omega)$ be a symplectic manifold. A singular complex distribution $\shP$ on $M$ is called a {\em singular polarization on $(M, \omega)$} if it satisfies the following conditions:
\begin{enumerate} [label = (\alph*)]
\item $\shP$ and $\shP+\bar{\shP}$ are involutive; and
\item there is an open dense subset $\check{M} \subset M$ such that $\shP|_{\check{M}}$ is a polarization on $(\check{M}, \omega |_{\check{M}})$ in the sense of Definition \ref{def4-1}.
\end{enumerate}
\end{definition}

\begin{definition} \label{def4-8}
Let $\shP$ be a singular polarization on an $2n$-dimensional symplectic manifold $(M, \omega)$ which is smooth on an open dense subset $\check{M} \subset M$. $\shP$ is called 
 \begin{enumerate}
 \item a {\em singular real polarization} if $\shP = \overline{\shP}$ on $\check{M}$;
 \item a {\em singular mixed polarization} if $0 < \rank(\shP \cap \overline{\shP} \cap TM)|_{\check{M}} < n$.
 \end{enumerate}
\end{definition}

\subsection {Sjamaar's holomorphic slices} In this subsection, we recall Sjamaar's work (see \cite{Sj}) on the
existence of holomorphic slices. Let $G$ be a compact, connected Lie group.

\begin{theorem}
\label{thm2-2}\cite[theorem 1.12]{Sj} Let $M$ be a K\"{a}hler
manifold with a Hamiltonian $G$-action by holomorphic isometries. Given any
point $p$ lying on an isotropic $G$-orbit in $M$, there exists a slice at $p$
for the $G^{\mathbb{C}}$-action. 
\end{theorem}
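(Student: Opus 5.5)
Since Theorem \ref{thm2-2} is quoted from \cite{Sj}, the paper will simply invoke it. If I had to supply a proof, I would follow the Luna-type slice argument adapted to the Kähler/Hamiltonian setting.

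\textbf{Step 1: reduce to $\mu(p)=0$.} Let $\mu_G$ be the moment map and $H=G_p$ the stabilizer, which is compact. Isotropy of $G\cdot p$ means $\omega(\xi_M,\eta_M)(p)=\langle \mu_G(p),[\xi,\eta]\rangle=0$ for all $\xi,\eta$. From this I would deduce that $\mu_G(p)$ is fixed by the coadjoint action. After shifting $\mu_G$ by this central value, I may assume $\mu_G(p)=0$. Two consequences follow:
\[
T_p(G^{\CC}\cdot p)=T_p(G\cdot p)\oplus J\,T_p(G\cdot p),\qquad (G^{\CC})_p=H^{\CC}.
\]
The second holds because $\operatorname{Re}\langle\mu_G(\exp(it\xi)p),\xi\rangle$ is strictly increasing in $t$ unless $\xi_M(p)=0$. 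This convexity will be used again below.

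\textbf{Step 2: linearize at $p$.} Choose the $H$-invariant Hermitian complement $S$ of $T_p(G^{\CC}p)$ in $T_pM$; it is a complex $H$-representation, hence an $H^{\CC}$-representation. Since $H$ is compact and acts by holomorphic isometries fixing $p$, averaging holomorphic coordinates over $H$ (Bochner linearization) gives an $H$-equivariant, hence $H^{\CC}$-equivariant, holomorphic embedding $\iota$ from an $H$-invariant neighbourhood $S_0\subset S$ of $0$ into $M$, with $\iota(0)=p$ and $d\iota_0$ the inclusion.

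\textbf{Step 3: the local biholomorphism.} Consider the holomorphic map
\[
\Psi: G^{\CC}\times_{H^{\CC}} S_0\to M,\qquad [g,s]\mapsto g\cdot\iota(s).
\]
At $[e,0]$ its differential is an isomorphism, by the dimension count from Step 1. By $G^{\CC}$-equivariance, $\Psi$ is then a local biholomorphism near $G^{\CC}\times_{H^{\CC}}\{0\}$, after shrinking $S_0$ to a suitable $H^{\CC}$-saturated neighbourhood.

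\textbf{Step 4: injectivity (main obstacle).} The real difficulty is making $\Psi$ injective on a saturated neighbourhood, so that its image is an open $G^{\CC}$-invariant set and $\iota(S_0)$ is a genuine slice. Since $G^{\CC}$ is noncompact, injectivity near the zero section does not follow formally. I would handle it with the Cartan decomposition $G^{\CC}=G\exp(i\mathfrak g)$ together with the convexity of $t\mapsto\langle\mu_G(\exp(it\xi)x),\xi\rangle$.

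Suppose $g\,\iota(s)=\iota(s')$ with $s,s'$ small. Write $g=k\exp(i\xi)$ with $k\in G$. Choose $S_0$ so that $\mu_G$ is small on $\iota(S_0)$, and use strict convexity along $\exp(it\xi)$ near the orbit where $\mu_G=0$. Together these force $\xi$ to be close to the infinitesimal stabilizer. A compactness argument on $G$ then puts $g$ near $H^{\CC}$, where local injectivity applies.

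Finally, I would shrink $S_0$ so that its $H^{\CC}$-saturation is preserved. The image $\Psi\bigl(G^{\CC}\times_{H^{\CC}}S_0\bigr)$ is then open, and $\Psi$ is a biholomorphism onto it; this is exactly the asserted slice at $p$.
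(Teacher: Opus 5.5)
The paper gives no proof of this statement; it simply quotes Sjamaar, so your opening remark is right. Read as a proof, however, your sketch has a genuine gap. It sits in Step 4, exactly where you locate the difficulty, and it already affects Step 2.

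\textbf{Where the construction breaks.} A slice must be stable under $(G^{\CC})_p=H^{\CC}$, and $G^{\CC}\times_{H^{\CC}}S_0$ only makes sense for $H^{\CC}$-stable $S_0$. As soon as $H$ has a nonzero weight $v$ on $S$, any such $S_0$ contains the whole orbit $\CC^{*}\cdot v$. So $S_0$ is not a small ball, and $\mu_G$ is not small on $\iota(S_0)$; already in the linear model, $\langle\mu_S,\zeta\rangle$ is unbounded along $\CC^{*}v$. You therefore cannot have both saturation and ``$\mu_G$ small on $\iota(S_0)$''. Likewise, Bochner linearization gives $\iota$ only on a small $H$-invariant ball. ``Hence $H^{\CC}$-equivariant'' hides the need to extend $\iota$ to $H^{\CC}\cdot S_0$ by $\iota(hs):=h\,\iota(s)$ and to check that this is well defined.

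\textbf{Why the convexity estimate does not close Step 4.} You can legitimately reduce injectivity of $\Psi$ to the claim that $g\,\iota(s)=\iota(s')$ with $s,s'$ small forces $g\in H^{\CC}$ and $gs=s'$. Your convexity argument does not prove this claim. With $g=k\exp(i\xi)$ and $|\mu_G|\le\epsilon$ at $\iota(s)$ and $\iota(s')$, monotonicity of $t\mapsto\langle\mu_G(\exp(it\xi)\iota(s)),\xi\rangle$ yields only
\[
\int_0^1\bigl\|\xi_M\bigl(\exp(it\xi)\,\iota(s)\bigr)\bigr\|^2\,dt\le 2\epsilon\,|\xi| .
\]
This gives no uniform control once the $\mathfrak h$-component $\zeta$ of $\xi$ is large, and that is precisely the non-compact direction of $H^{\CC}$ you must control. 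The lower bound $\|\xi_M\|\ge c\,|\xi^{\perp}|$ near $p$ degrades, because $\|\zeta_M\|$ is only $O(|\zeta|\,d(\cdot,p))$, and the path may leave any fixed neighbourhood of $p$. So you get at most a bound on the angle between $\xi$ and $\mathfrak h$. Moreover, in the nonabelian case even a small $\mathfrak h^{\perp}$-component does not put $\exp(i\xi)$ in a small neighbourhood of $H^{\CC}$. The ``compactness argument on $G$'' only handles the harmless factor $k$.

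\textbf{What is missing.} The missing ingredient is global, not a sharper local estimate. You need that $G^{\CC}p$ is closed in a $G^{\CC}$-stable neighbourhood admitting a good (categorical) quotient. You also need a holomorphic analogue of Luna's fundamental lemma: an equivariant local biholomorphism that is injective on a closed orbit and maps closed orbits to closed orbits is biholomorphic over a saturated neighbourhood. Your word ``saturated'' in Step 3 only has meaning relative to such a quotient. Roughly, Sjamaar obtains these from Stein theory: $G$-invariant Stein neighbourhoods of the orbit, Heinzner's complexification of them, and Snow's slice theorem for reductive actions on Stein spaces.

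For the only use this paper makes of the theorem, $G=T^n$ acting on a toric manifold, none of this is needed. If $\mu(p)$ lies in the relative interior of a codimension-$l$ face, the affine chart $\CC^n$ of a vertex of that face contains the $T^n_{\CC}$-stable open set $(\CC^*)^{n-l}\times\CC^l$ through $p$. Then $\{\mathrm{pt}\}\times\CC^l$ is an explicit slice, with stabilizer $(\CC^*)^l$.
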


We recall the definition of slices.

\begin{definition}
A slice at $p \in M$ for the $G^{\mathbb{C}}$-action is a locally closed
analytic subspace $S$ of $M$ with the following properties:

\begin{enumerate}
\item $p \in S$;

\item the saturation $G^{\mathbb{C}}S$ of $S$ is open in $M$;

\item $S$ is invariant under the action of the stabilizer $(H^{\mathbb{C}
})_{p}$;

\item the natural $G^{\mathbb{C}}$-equivariant map from $G^{\mathbb{C}}\times_{(H^{\mathbb{C}})_{p}} S$ into $M$, which sends $[g, y]$ to the point $g\cdot y$, is an analytic isomorphism onto 
$G^{\mathbb{C}}S$.
\end{enumerate}
\end{definition}

\begin{remark}
The condition that $p$ lying on an isotropic $G$-orbit in $M$ is equivalent
to $\mu \left( p\right) \in \mathfrak{g}^{\ast }$ is fixed under the
coadjoint action of $G$ \cite[theorem 1.12]{Sj}. Therefore every orbit is
isotropic if $G=T^{n}$ is Abelian, as in our situation. 
\end{remark}

\subsection{Quantum Hilbert spaces associated with mixed polarizations}
 Let $(M, \omega, J)$ be a 2n-dimensional toric variety determined by a Delzant polytope $P$, with associated moment map $\mu: M \twoheadrightarrow P $.
 Consider the action of a Hamiltonian $k$-dimensional subtorus $K \subset T^{n}$, denoted $\rho_{K}: K \rightarrow \Diff(M, \omega, J)$, with corresponding moment map $\mu_{K}: M \rightarrow    \fot_{K}^{*}$. According to \cite[Theorem 3.8]{LW1} (see also \cite{W1}), a polarization $\shP_{K}$ can be constructed as follows:
 \begin{equation}\label{def-mixp}
 \shP_{K}=\left((\ker d\mu_{K} \otimes \CC)\cap \shP_{J} \right) \oplus (\Im d\rho_{K}\otimes \CC).
 \end{equation}
 In general, $\shP_{K}$ defines a (possibly singular) mixed polarization, with $\mathrm{rk}_{\RR}({\shP_{K}}|_{\mathring{X}})=k$ (see \cite[Theorem 1.1]{LW1}). 
 When $k=n$, the polarization
 \begin{equation}\label{def-realp}
 \left((\ker d\mu_{T^{n}} \otimes \CC)\cap \shP_{J} \right) \oplus (\Im d\rho_{T^{n}}\otimes \CC)
 \end{equation}
 defines a singular real polarization, which we denote by $\shP_{\RR}$. On the open dense subset $\check{M}$ of $M$, $\shP_{\RR}$ coincides with the real polarization given by $\Ker d\mu$ studied in \cite{BFMN}. Let $(L, h, \nabla)$ be the prequantum line bundle on $(M, \omega, J)$ determined by the moment polytope $P$ with curvature $F_{\nabla} = -i \omega$. The quantum space $\shH_{K}$ associated with $\shP_{K}$ is defined by
$$ 
\shH_{K}=
\Bigl\{\delta\in \Gamma_{c}(M,L^{-1})' \ \Big|\ 
\nabla_{\xi}\delta=0 \text{ for all } \xi\in \Gamma(M,\shP_{K})\Bigr\},
$$
and is studied in \cite{LW2, W1}.
In \cite{LW2}, Leung and the first author give a geometric description of the weight decomposition of $\shH_{K}$. In \cite{W1}, the first author shows that the dimension of $\shH_{K}$ coincides with the number of integral point of the Delzant polytope and constructs a canonical basis $\{ \delta^{m}_{K}\}$ of $\shH_{K}$,
where $\delta_{K}^{m}$ is defined by formula (\ref{dis-basis0}) for $t=0$.
\begin{theorem}\cite[Theorem 3.2]{W1}
Under assumption $(*)$, $\{\delta_{K}^{m}\}_{m\in P_{\ZZ}}$ forms a basis of quantum space $\shH_{K}$. In particular, $\mathrm{dim} \shH_{K}=\mathrm{dim} H^{0}(X,L)$.
\end{theorem}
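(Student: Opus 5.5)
This statement is quoted from \cite[Theorem 3.2]{W1}. To prove it I would proceed in three steps: find an explicit description of $\shP_K$ on the open dense torus $\mathring{M}$, show that every $\shP_K$-polarized distributional section is a combination of the $\delta_K^m$, and then check that the $\delta_K^m$ really are polarized and are linearly independent.

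\emph{Local description and reduction.} Choose a splitting $\mathfrak{t}=\mathfrak{t}_K\oplus\mathfrak{t}_Q$ compatible with $Q\hookrightarrow T^n$, and action--angle coordinates $(x_K,x_Q,\theta_K,\theta_Q)$ on $\mathring{M}$. By \eqref{def-mixp}, on $\mathring{M}$ we have
\[
\shP_K=\Span_{\CC}\Bigl\{\tfrac{\partial}{\partial\theta_K}\Bigr\}\oplus\Bigl(\shP_J\cap(\Ker d\mu_K\otimes\CC)\Bigr).
\]
The second summand is spanned by the $(0,1)$-type vectors in the $Q$-directions. Using the Legendre transform $y=\partial g/\partial x$, these can be written as
\[
\frac{\partial}{\partial\bar w_Q}\quad\text{computed relative to the complex structure on the slice } \{x_K=\text{const}\}.
\]

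\emph{Every polarized section is a sum of the $\delta_K^m$.} Let $\delta\in\shH_K$. Decompose it into $T^n$-weights $m\in\mathfrak{t}^*_\ZZ$; the covariant derivative along $\shP_K$ commutes with the torus action, so each weight component is again polarized. For a fixed weight $m$:
\begin{enumerate}
\item The condition $\nabla_{\partial/\partial\theta_K}\delta=0$ forces the component to be supported on $\mu_K^{-1}(m_K)$. This is the usual Bohr--Sommerfeld argument, as in \cite{BFMN}: the distributional equation $(\partial_{\theta_K}-i(x_K-m_K))\cdot=0$ admits only $\delta(x_K-m_K)$-type solutions.
\item Along that level set, the $Q$-directional holomorphicity condition says the component is $\delta(x_K-m_K)$ times a holomorphic section of the reduced bundle on the $Q$-toric variety $\mu_K^{-1}(m_K)/K$, whose moment polytope is the slice $P\cap\{x_K=m_K\}$.
\item The $m$-weight space of holomorphic sections of that reduced bundle is one-dimensional exactly when $m\in P$, and zero otherwise.
\end{enumerate}
So each weight component is a multiple of $\delta_K^m$, and it vanishes unless $m\in P$.

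\emph{Extension over $M\setminus\mathring{M}$ and independence.} Conversely, I need $\delta_K^m$ to be polarized on all of $M$, not just on $\mathring{M}$. To get this I would use Sjamaar's slices (Theorem \ref{thm2-2}), which give a local model $\CC^{a}\times(\CC^*)^{n-a}$ near each boundary stratum. In this model one can check directly that $\delta_K^m$ is a globally defined distribution annihilated by all smooth sections of $\shP_K$ in the sense of Definition \ref{def4-4}. It is also necessary to exclude polarized distributions supported entirely on the boundary divisors. For these I would use the same weight argument inside the chart, together with the fact that each $\delta_K^m$ has nonzero restriction to the open part. Linear independence is automatic, since the $\delta_K^m$ have distinct weights. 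This gives $\dim\shH_K=\#(P\cap\mathfrak{t}^*_\ZZ)=\dim H^0(M,L)$.

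\emph{Main obstacle.} I expect the hardest step to be the boundary analysis: checking, in the slice coordinates, that no distribution supported on $\mu^{-1}(\partial P)$ satisfies the polarization conditions. The concern is that the real directions $\Im d\rho_K$ degenerate there. I would handle this first by an explicit computation in the model $\CC^a$, using homogeneity under the $T^a$-action.
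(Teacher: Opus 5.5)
The paper gives no proof of this statement; it quotes it from \cite{W1}, so your outline has to stand on its own. Its first half is sensible: the weight decomposition, support of the weight-$m$ part on $\mu_K^{-1}(m_K)$, and reduction to holomorphic sections on the reduced space. The boundary step, however, aims at a false statement.

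Suppose $m\in P\cap\fot^*_\ZZ$ has $m_K=\iota_K^*(m)$ on the boundary of $\mu_K(M)=\iota_K^*(P)$. Then $P^m\subset\partial P$, so $M^m=\mu^{-1}(P^m)\subset\mu^{-1}(\partial P)$, and $\delta_K^m$ is supported entirely in $M\setminus\mathring M$. Such $m$ exist as soon as $K$ is nontrivial: take vertices of $P$ lying over vertices of $\iota_K^*(P)$.

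For example, take $M=\CC P^1\times\CC P^1$, $P=[0,1]^2$, and $K$ the first circle factor. Every lattice point of $P$ has $m_K\in\{0,1\}$, so all four $\delta_K^m$ live on the divisors $\mu^{-1}(\{x_1=0\})$ and $\mu^{-1}(\{x_1=1\})$ and vanish on $\mathring M$. Similarly, for $K=T^n$, $\delta_K^m$ is (up to normalization) $\delta^m$, which is boundary-supported for every $m\in\partial P$.

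So your claim that each $\delta_K^m$ has nonzero restriction to the open part is false. The lemma you name as the main obstacle, that no polarized distribution is supported on $\mu^{-1}(\partial P)$, would contradict the theorem. In the example, your analysis on $\mathring M$ finds no admissible weight at all, since no integer lies strictly between $0$ and $1$. You would conclude $\shH_K=0$ instead of $\dim\shH_K=4$.

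The chart analysis near singular orbits must instead classify all weight-$m$ polarized distributions there and \emph{produce} the boundary-supported ones, showing each weight space is spanned by $\delta_K^m$. This is the analogue of how \cite{BFMN} obtains the Bohr--Sommerfeld fibres over $\partial P$.

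Your steps (1)--(2) do not reach this case. At such levels $\mu_K^{-1}(m_K)$ is not a regular level set and $K$ does not act locally freely. In the example, $\mu_K^{-1}(0)=\{\mathrm{pt}\}\times\CC P^1$ has real codimension $2$ and $K$ fixes it pointwise. So the ansatz ``$\delta(x_K-m_K)$ times a section'' is unavailable. Moreover, $(\mu_K-m_K)u=0$ has transversal-derivative solutions: on $\CC$ with the standard circle action, $|z|^2u=0$ is solved by every $\partial_z^j\delta_0$ and $\partial_{\bar z}^j\delta_0$. These must be eliminated using the weight together with the remaining $\shP_K$-equations.

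Likewise, the constraint $m\in P$ in step (3) cannot be seen on $\mathring M$. If $m_K$ is interior to $\iota_K^*(P)$, then $\tau\mapsto\int_{\mu_K^{-1}(m_K)\cap\mathring M}\langle w^m\sigma_D,\tau\rangle\,\vol_K^m$ is $\shP_K$-polarized on $\mathring M$ for every $m$ with that $K$-component, including $m\notin P$. Excluding these is again a statement about behaviour along $\mu^{-1}(\partial P)$. It also concerns the reduced spaces $\mu_K^{-1}(m_K)/K$ at singular levels, whose polytopes $P^m$ need not be Delzant. You cannot simply import this from the smooth compact toric case.
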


\section{Main result}
Recall that we work under the following standing assumption $(*)$: let $(M, \omega, J)$ be a $2n$-dimensional toric variety determined by a Delzant polytope $P$, with associated moment map $\mu: M \twoheadrightarrow P$. Let $K \subset T^n$ be a $k$-dimensional subtorus and $\rho_K: K \to \mathrm{Diff}(M, \omega, J)$ be the subaction with corresponding moment map $\mu_K: M \to \mathfrak{t}_K^*$. Notice that the short exact sequence
\begin{center}
    \begin{tikzcd}
        K \ar[r] &  T^{n} \ar[r] & Q := T^{n}/K
    \end{tikzcd}
\end{center}
always splits. Once and for all, we fix a splitting $i_Q: Q \to T^n$.
Let $\phi: \mathfrak{t}_Q^* \to \mathbb{R}$ be a strictly convex smooth function. Denote by $X_\varphi$ the Hamiltonian vector field associated with $\varphi := \phi \circ \mu_Q$, where $\mu_Q = i_Q^* \circ \mu$ and $i_Q^*: \mathfrak{t}^* \to \mathfrak{t}_Q^*$ is the natural projection.

\subsection{Degeneration from mixed to real polarizations}

To study how geometric quantization depends on the choice of polarizations, it is natural to examine the relationship between different types of polarizations.
In this paper, we investigate the connection between the mixed polarization $\shP_{K}$
(see formula (\ref{def-mixp})), which is associated with the Hamiltonian subtorus $K$-action, and the real polarization $\shP_{\RR}$ (see formula (\ref{def-realp})) associated with the Hamiltonian $T^{n}$-action.
 
 In this subsection, we show that applying the imaginary-time flow generated by $X_\varphi$ to $\shP_K$ yields a one-parameter family of mixed polarizations $\shP_{K,t}$ on $(M, \omega)$ (see Theorem \ref{thm1}), and that $\shP_{K,t}$ converges to $\shP_{\RR}$ as $t \to \infty$ (see Theorem \ref{plim}).

\subsubsection{Construction of a one-parameter family of mixed polarizations}
We construct a one-parameter family mixed polarizations by applying the imaginary time flow of $X_\varphi$ on $\shP_K$.

We begin with considering a local model: a K\"ahler manifold $T_{\CC}^{n-l}\times\CC^{l}$ equipped with a Hamiltonian $T^{n}$-action by holomorphic isometries. We assume that its complex structure is determined by the standard holomorphic coordinates $(w_1, \cdots, w_{n-l}, z_1, \cdots, z_l)$, although its K\"ahler form is not assumed to be the standard one. We further assume that the action of $T^{n}=T^{p} \times T^{n-l-p}  \times T^{l}$ is given by the standard multiplication of $T^{p} \times T^{n-l-p}$ on $T_{\CC}^{n-l}$ together with a unitary representation of $T^{l}$ on $\CC^{l}$.

The standard basis $\{\xi_{1}, \cdots, \xi_{n}\}$ of the Lie algebra of $T^n$ 
induces global linear coordinates $(x_1, \dots, x_n)$ on its dual. 
With respect to these coordinates, we express the moment map $\mu$ for the Hamiltonian $T^n$-action in terms of its components:
$$\mu=(\mu_{1},\cdots, \mu_{n}):T^{n-l}_{\CC} \times \CC^{l} \rightarrow \mathbb{R}^n.$$

We now focus on the subtorus $Q = T^q \subset T^n$ consisting of the $(p+1)$st through $(p+q)$th circle factors, where $n-l-p\le q\le n-p$. Its subtorus action has moment map
$$\mu_Q = (\mu_{p+1}, \cdots, \mu_{p+q}): T^{n-l}_{\CC} \times \CC^{l} \rightarrow \mathbb{R}^q.$$
In other words, for each $\gamma = p+1, \cdots, p+q$, the fundamental vector field $\xi_{\gamma}^{\#}$ associated with $\xi_{\gamma}$ is the Hamiltonian vector field associated with the component function $\mu_{\gamma}$. According to our assumption,
\begin{equation}
    \label{eq: fundamental vector field 1}
    \xi_{\gamma}^{\#} = i \left( w_{\gamma} \frac{\partial}{\partial w_{\gamma}} - \overline{w}_{\gamma} \frac{\partial}{\partial \overline{w}_{\gamma
    }} \right) \quad \text{for } \gamma = p+1, \cdots, n-l;
\end{equation}
and there exists a matrix $(b_{\gamma k})$ over $\mathbb{Z}$, where the index $\gamma$ ranges over $n-l+1, \cdots, p+q$ and $k$ ranges over  $1, \cdots, l$, such that
\begin{equation}
    \label{eq: fundamental vector field 2}
    \xi^{\#}_{\gamma}=\sum_{k=1}^{l} i b_{\gamma k} \left( z_{k} \frac{\partial}{\partial z_{k}} - \overline{z}_{k} \frac{\partial}{\partial \overline{z}_{k}} \right) \quad \text{for } \gamma=n-l+1, \cdots, p+q.
\end{equation}

Let $\phi: \mathbb{R}^q \to \mathbb{R}$ be a strictly convex smooth function, and let $X_\varphi$ be the Hamiltonian vector field associated with $\varphi := \phi \circ \mu_Q$.

\begin{lemma}\label{local coordinate lem}
Given the action of $T^{n}= T^{p} \times T^{n-l-p}  \times T^{l}$ on $T^{n-l}_{\CC} \times \CC^{l}$ as described above, 
$e^{-itX_{\varphi}}$ can be applied to $w_{j}$ and $z_{k}$ completely, for $ j=1,\cdots,n-l, k=1, \cdots, l$. Moreover,
\begin{enumerate}
\item $w_{j}^{t}:=e^{-itX_{\varphi}}w_{j}=w_{j}$, for $j=1,\cdots, p$;
\item $w_{j}^{t}:=e^{-itX_{\varphi}}w_{j}=w_{j}e^{t\frac{\partial \varphi}{\partial \mu_{j}}}$, for $j=p+1, \cdots, n-l$;
\item $ z_{k}^{t}:=e^{-itX_{\varphi}}z_{k}=z_{k}e^{t \sum_{\gamma=n-l+1}^{p+q} \frac{\partial \varphi}{\partial \mu_{\gamma}} b_{\gamma k}}$, for $k=1,\cdots,l$.
\end{enumerate}
 \end{lemma}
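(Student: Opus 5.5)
The plan is to compute $X_\varphi$ explicitly and then evaluate the exponential series $e^{-itX_\varphi}f=\sum_{N\ge0}\frac{(-it)^N}{N!}X_\varphi^N f$ for $f=w_j,z_k$, showing it closes up into an exponential.

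\emph{Step 1: express $X_\varphi$.} Since $\varphi=\phi\circ\mu_Q$ and each $\mu_\gamma$ has Hamiltonian vector field $\xi_\gamma^{\#}$, linearity of $f\mapsto X_f$ and the Leibniz rule give
\[
X_\varphi=\sum_{\gamma=p+1}^{p+q}\frac{\partial\varphi}{\partial\mu_\gamma}\,\xi_\gamma^{\#},
\]
where $\frac{\partial\varphi}{\partial\mu_\gamma}$ denotes $(\partial_\gamma\phi)\circ\mu_Q$. Into this we substitute \eqref{eq: fundamental vector field 1} and \eqref{eq: fundamental vector field 2}.

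\emph{Step 2: first derivatives.} For $j\le p$, neither $w_j$ nor $\bar w_j$ appears in any $\xi_\gamma^{\#}$ with $\gamma\ge p+1$, so $X_\varphi w_j=0$ and $e^{-itX_\varphi}w_j=w_j$; this gives (1). For $p+1\le j\le n-l$, only the $\gamma=j$ term acts, and $X_\varphi w_j=i\,\frac{\partial\varphi}{\partial\mu_j}w_j$. Similarly,
\[
X_\varphi z_k=i\Bigl(\sum_{\gamma=n-l+1}^{p+q}\frac{\partial\varphi}{\partial\mu_\gamma}b_{\gamma k}\Bigr)z_k=:i\,c_k\,z_k .
\]

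\emph{Step 3: iterate, the key point.} The coefficients $\frac{\partial\varphi}{\partial\mu_\gamma}$ are functions of $\mu_Q$. Each $\mu_\gamma$ is $T^n$-invariant because the torus is abelian, so $\xi_{\gamma'}^{\#}\mu_\gamma=\{\mu_{\gamma'},\mu_\gamma\}=0$. Hence $X_\varphi$ annihilates every function of $\mu_Q$, in particular $\frac{\partial\varphi}{\partial\mu_j}$ and $c_k$. Induction then gives $X_\varphi^N w_j=\bigl(i\frac{\partial\varphi}{\partial\mu_j}\bigr)^N w_j$ and $X_\varphi^N z_k=(ic_k)^N z_k$. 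The series
\[
\sum_N\frac{(-it)^N}{N!}\,(ia)^N f=\sum_N\frac{(ta)^N}{N!}\,f
\]
converges absolutely and pointwise (indeed locally uniformly with all derivatives, since $a$ is smooth). Its sum is $e^{ta}f$, which yields (2) and (3). This convergence is what "can be applied completely" means: the formal series converges to a smooth function.

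I expect the only real subtlety to be Step 3, namely justifying that $X_\varphi$ kills the coefficient functions. This is exactly the Poisson-commutativity of the torus moment map components, so it should be routine. I will also check the sign convention relating $X_{\mu_\gamma}$ to $\xi_\gamma^{\#}$, so that the exponent comes out as $+t$ rather than $-t$, consistent with the stated formulas.
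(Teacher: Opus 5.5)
Your proposal is correct and follows the paper's proof essentially step for step. You write $X_\varphi=\sum_{\gamma}\frac{\partial\varphi}{\partial\mu_\gamma}\xi_\gamma^{\#}$, read off $X_\varphi w_j$ and $X_\varphi z_k$ from the explicit fundamental vector fields, use that the coefficient functions are torus-invariant (hence annihilated by $X_\varphi$) to iterate by induction, and then sum the exponential series. Your added remarks on the Poisson-commutativity justification and on locally uniform convergence are slightly more detailed than the paper's but do not change the argument.
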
 
\begin{proof}  
  The Hamiltonian vector field associated with $\varphi = \phi \circ \mu_Q$ is given by
  $$X_{\varphi}=\sum_{\gamma=p+1}^{p+q} \frac{\partial \varphi}{\partial \mu_{\gamma}} \xi_{\gamma}^{\#}.$$
  We then apply equations \eqref{eq: fundamental vector field 1} and \eqref{eq: fundamental vector field 2} to each of the following cases:
\begin{enumerate}
    \item Fix $j=1,\cdots p$. This gives $X_{\varphi} (w_{j})=0$ and therefore $ w_{j}^{t}:=e^{-itX_{\varphi}}w_{j}=w_{j}$.
    \item Fix $j=p+1,\cdots n-l$. This gives $-iX_{\varphi} (w_{j})=w_{j} \frac{\partial \varphi}{\partial \mu_{j}}.$
 As a $Q$-invariant function, $\frac{\partial \varphi}{\partial \mu_{j}}$ is annihilated by $X_{\varphi}$, hence we obtain by induction that
$$(-i)^{r}X_{\varphi}^{r}(w_{j})=w_{j}\left( \frac{\partial \varphi}{\partial \mu_{j}} \right)^{r} \quad \text{for all } r \in \mathbb{N}.$$
Therefore $e^{-itX_{\varphi}}$ can be applied to $w_{j}$ completely and 
 $$w_{j}^{t}:=e^{-itX_{\varphi}}w_{j}=\sum_{r=0}^\infty \frac{1}{r!}(-itX_{\varphi})^{r}(w_{j})=w_{j}e^{t\frac{\partial \varphi}{\partial \mu_{j}}}.$$
 \item Fix $k=1,\cdots,l$. We obtain $-iX_{\varphi} z_{k}=z_{k} \sum_{\gamma=n-l+1}^{p+q}\frac{\partial \varphi}{\partial \mu_{\gamma}} b_{\gamma k}$. Again, by the $Q$-invariance, $\sum_{\gamma=n-l+1}^{p+q}  \frac{\partial \varphi}{\partial \mu_{\gamma}} b_{\gamma k}$ is annihilated by $X_{\varphi}$. By the same argument as in $(2)$, we prove that 
$e^{-itX_{\varphi}}$ can be applied to $z_{k}$ completely for $k=1,\cdots,l$, and
$$z_{k}^{t}:=e^{-itX_{\varphi}} z_{k}=z_{k}\sum_{r=0}^\infty \frac{1}{r!}\left( t \sum_{\gamma=n-l+1}^{p+q} \frac{\partial \varphi}{\partial \mu_{\gamma}} b_{\gamma k}\right)^{r}=z_{k}e^{t\left(\sum_{\gamma=n-l+1}^{p+q} \frac{\partial \varphi}{\partial \mu_{\gamma}} b_{\gamma k}\right)}.$$
\end{enumerate}
\end{proof}

\begin{remark}
    In the above lemma, for the ease of notations, we write $\tfrac{\partial \varphi}{\partial \mu_\gamma}$ in place of $\mu_Q^*\left(\tfrac{\partial \phi}{\partial x_{\gamma}} \right)$ for $\gamma = p+1, \cdots, p+q$, where we regard $(x_{p+1},\ldots,x_{p+q})$ as the standard coordinates on $\mathbb{R}^{q}$. 
\end{remark}

\begin{theorem}\label{thm1} Under assumption $(*)$, for any $t\ge0$,
$$\shP_{K,t}:=e^{-it\shL_{X_{\varphi}}}\shP_{K}$$
is a (singular) mixed polarization with $\rk_{\RR}(\shP_{K,t})=\rk_{\RR}(\shP_{K})$.
\end{theorem}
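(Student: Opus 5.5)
The plan is to work locally, using Sjamaar's holomorphic slice theorem (Theorem \ref{thm2-2}), and then to globalize using $T^n$-invariance. Every $T^n$-orbit is isotropic, so for each $p\in M$ there is a slice for the $(T^n)^{\CC}$-action. This identifies a $T^n_{\CC}$-saturated open neighbourhood $U$ of $p$ with $T^{n-l}_{\CC}\times\CC^{l}$, with holomorphic coordinates $(w_1,\dots,w_{n-l},z_1,\dots,z_l)$. Here $l$ is the dimension of the stabilizer of $p$, which acts on $\CC^l$ through a unitary representation.

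Next I choose a basis of $\fot$ adapted to the splitting $T^n=K\times i_Q(Q)$ and to the stabilizer, so that the action takes the form assumed in the local model. In that basis $Q$ is the torus of the $(p+1)$st through $(p+q)$th factors, the remaining factors span $K$, and the fundamental vector fields are given by \eqref{eq: fundamental vector field 1} and \eqref{eq: fundamental vector field 2}. This change of basis is integral but need not be unimodular relative to the stabilizer. If needed, I will pass to a finite cover or simply work with the Lie algebra, since only vector fields and moment maps enter. In this frame Lemma \ref{local coordinate lem} gives the functions $w_j^t$ and $z_k^t$ explicitly.

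On $U$ I describe $\shP_K$ by generators. Its real part $\Im d\rho_K\otimes\CC$ is spanned by the $\xi^\#$ for $\xi\in\fot_K$. Its complex part $(\ker d\mu_K\otimes\CC)\cap\shP_J$ is spanned by those holomorphic directions $\partial/\partial \bar w_j$ and $\partial/\partial\bar z_k$ (or suitable combinations of them) that annihilate $\mu_K$. Equivalently, I identify $\shP_K$ as the annihilator of the differentials of a set of functions, namely $\mu_K$ together with the $K$-invariant holomorphic coordinates among the $w_j$ and $z_k$ (the $Q$-directions and their monomial analogues on $\CC^l$).

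Since $X_\varphi$ is built from $Q$-fundamental fields with $Q$-invariant coefficients, it commutes with $\xi^\#$ for $\xi\in\fot_K$ and preserves $\mu_K$. Hence
\[
e^{-it\shL_{X_\varphi}}\shP_K=\bigl((\ker d\mu_K\otimes\CC)\cap\ker d\{w_j^t,z_k^t\}\bigr)\oplus(\Im d\rho_K\otimes\CC),
\]
where the flowed generators are given by Lemma \ref{local coordinate lem}. In this way $\shP_{K,t}$ becomes the polarization determined by $\mu_K$ and the functions $w^t_j$ ($j\le p$ unchanged) and $z^t_k$.

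It remains to check the polarization axioms. The distribution is the common kernel of the differentials of the functions $\mu_K$, $w^t_j$, $z^t_k$ (together with the $K$-orbit directions), so it is involutive, and so is $\shP_{K,t}+\overline{\shP_{K,t}}$. For the Lagrangian property I will use that all the defining functions Poisson commute: they are all obtained from $T^n$-weight functions composed with functions of $\mu$. The key point is that $\{w^t_a,w^t_b\}=0$ follows from $\{w_a,w_b\}=0$, because $e^{-itX_\varphi}$ acts as a Poisson automorphism on functions that are analytic in $t$. A dimension count shows the space has rank $n$.

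For the real rank, I need $d w^t$ to remain complex-independent of its conjugates in the $Q$-directions. Write $w_j^t=\exp\bigl(y_j+t\,\partial\varphi/\partial\mu_j+i\theta_j\bigr)$. By strict convexity of $\phi$, the map $x\mapsto y+t\nabla\phi$ is still a Legendre-type diffeomorphism, so $\{w^t\}$ are holomorphic coordinates for the complex structure with symplectic potential $g+t\phi\circ\mu_Q$. This is Kähler on the $Q$-part, which gives $\rk_\RR(\shP_{K,t})=k=\rk_\RR(\shP_K)$ on $\mathring M$.

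The local descriptions glue because they are canonically defined: $\shP_{K,t}$ is expressed intrinsically through $\mu_K$, $\rho_K$ and the Kähler structure $J_t$ with potential $g+t\varphi$.

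The main obstacle I expect is the singular locus, where $l>0$. There I must show that the flowed generators, in particular $\partial/\partial\bar z_k$-type fields corrected by terms in $t\,\partial^2\phi$, remain smooth sections on all of $U$, and that the open dense set $\check M$ where $\shP_{K,t}$ is a genuine polarization can be taken independent of $t$. I will first try to handle this using the explicit formula $z_k^t=z_k e^{t(\cdots)}$, whose exponent is a smooth function of $|z|^2$.
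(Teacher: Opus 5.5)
Up to the key identity your proposal follows the paper. Holomorphic slices, Lemma \ref{local coordinate lem}, and the facts that $X_\varphi$ commutes with the $K$-fundamental fields and preserves $\mu_K$ give
\[
\shP_{K,t}=\bigl((\ker d\mu_K\otimes\CC)\cap\shP_t\bigr)\oplus(\Im d\rho_K\otimes\CC),
\]
where $\shP_t$ is the K\"ahler polarization of $J_t$. The gap is in what comes after. You set out to verify the axioms of Definition \ref{def4-7} by hand, and you explicitly leave the singular strata open. The remedy you sketch, smoothness of the flowed generators via $z_k^t=z_ke^{t(\cdots)}$, does not address what is needed there. Involutivity in the sense of Definition \ref{def4-5} is a statement about all smooth sections over all of $M$. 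At points where the $K$-stabilizer has positive dimension, the fibres of $\ker d\mu_K$ and $\Im d\rho_K$ jump. There, the differentials of $\mu_K$ and of the $K$-invariant monomials in the $z^t_k$ vanish or become dependent, so your ``common kernel of differentials'' description no longer computes $\shP_{K,t}$. Even on $\mathring M$, that description must use only $\mu_K$ and the $K$-invariant $J_t$-holomorphic coordinates; including $w_j$ for $j\le p$ would kill $\partial/\partial\theta_j$.

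The missing observation is that no new verification is required. You already note that $\shP_{K,t}$ is expressed intrinsically through $\mu_K$, $\rho_K$ and $J_t$; use this not only for gluing but to invoke the existing theorem. The displayed identity says that $\shP_{K,t}$ is exactly the Leung--Wang mixed polarization \eqref{def-mixp} for the same $K$-action, with $J$ replaced by $J_t$. Moreover, $J_t$ is again a smooth, $\omega$-compatible, $T^n$-invariant complex structure on all of $M$. This is what \cite[Theorem 3.19]{LW1} provides. Equivalently, $g+t\,\phi\circ\iota_Q^*$ on $P$ is again a symplectic potential in Abreu's sense; note it is this function on $P$, not $g+t\varphi$, which lives on $M$. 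Hence \cite[Theorem 1.1]{LW1} applies verbatim to $J_t$. It gives directly that $\shP_{K,t}$ is a singular mixed polarization of real rank $k=\rk_{\RR}(\shP_K)$, singular locus included. This is exactly how the paper concludes. Adding this step completes your argument and makes your hand checks of involutivity, the Lagrangian condition and the real rank on $\mathring M$ unnecessary.
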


\begin{proof}
Fix any point $x \in M$. Denote the stabilizer of the $T^{n}$-action at $x$ by $H^{T}_{x}$, and its dimension by $l$. Let $p$ be the integer such that $H^{K}_{x} := H^{T}_{x}\cap K$, the stabilizer of $K$ at $x$, has dimension $k-p$. Define $T^{p}=K/H^{K}_{x}$ and $T^{n-l-p}=Q/H_{x}^{Q}$, where $H^{Q}_{x}=H^{T}_{x}/H^{K}_{x}$. We have $T^{n} \cong T^{p} \times T^{n-l-p} \times T^{l},$ where $T^{l}\cong  H^{Q}_{x} \times H^{K}_{x} $.

By the holomorphic slice theorem \cite[Theorem 1.12]{Sj} (see Theorem \ref{thm2-2}), 
there exists a $T^{n}$-equivariant map from a $T^n$-invariant open neighborhood of $x$ to a $T^n$-invariant open subset of 
$T^{n-l}_{\CC} \times \CC^{l}$, where $T^{n}= T^{p} \times T^{n-l-p} \times T^{l}$ acts on $T^{n-l}_{\CC} \times \CC^{l}$ by the standard multiplication of $ T^{p} \times T^{n-l-p} $ on $T^{n-l}_{\CC}$ together with a unitary representation of $T^{l}$ on $\CC^{l}$. 

Let $(w_{1},\cdots,w_{n-l}, z_{1}, \cdots,z_{l})$ be the standard holomorphic coordinates on $T^{n-l}_{\CC} \times \CC^{l}$. Since each $w_{j}$ is nowhere vanishing, we introduce logarithmic polar coordinates $(\rho_j,\theta_j)$ by $w_{j}=e^{\rho_j+i\theta_{j}}$.
By \cite[Theorem 3.1]{MNGW} (see also the proof of \cite[Theorem 3.21]{LW1}), $(\shP_{K})_{x}$ is given by
\begin{align*}(\shP_{K})_{x} &= \Span_{\CC} \left\{\frac{\partial }{\partial \theta_{1}} , \cdots, \frac{\partial }{\partial \theta_{p}}, \frac{\partial }{\partial \bar{w}_{p+1}},\cdots, \frac{\partial }{\partial \bar{w}_{n-l}},\frac{\partial }{\partial \bar{z}_{1}},\cdots, \frac{\partial }{\partial \bar{z}_{l}} \right\}\\
&=\Span_{\CC} \left\{\frac{\partial }{\partial \theta_{1}} , \cdots, \frac{\partial }{\partial \theta_{p}}, X_{{w}_{p+1}},\cdots, X_{{w}_{n-l}} ,X_{{z}_{1}},\cdots,X_{{z}_{l}}  \right\}.
\end{align*}

By Lemma~\ref{local coordinate lem}, the action of imaginary time flow $e^{-itX_{\varphi}}$ on the local coordinate $w_j$ is
$$w_{j}^{t}:=e^{-itX_{\varphi}}w_{j}=w_{j}e^{t\frac{\partial \varphi}{\partial \mu_{j}}}$$
for $j = p+1, \ldots, n-l$. Similarly, for $k = 1, \ldots, l$, we have
$$z_{k}^{t}:=e^{-itX_{\varphi}} z_{k}=z_{k}e^{t\left(\sum_{\gamma=n-l+1}^{p+q} \frac{\partial \varphi}{\partial \mu_{\gamma}} b_{\gamma k}\right)}.$$
Consequently, the induced action on the corresponding vector fields yields
\[
e^{-it\mathcal{L}_{X_{\varphi}}}X_{w_{j}} = X_{w_{j}^{t}}, \qquad e^{-it\mathcal{L}_{X_{\varphi}}}X_{z_{k}} = X_{z_{k}^{t}},
\]
for $j = p+1, \ldots, n-l$ and $k = 1, \ldots, l$.
Let $\shP_{t}$ be the K\"ahler polarization associated with the complex structure $J_{t}$. By the proof of \cite[Theorem 3.19]{LW1}, $(w_{1}^{t},\cdots,w_{n-l}^{t},z_{1}^{t},\cdots,z_{l}^{t})$ form a $J_{t}$-holomorphic coordinate around $x$.
Then we have:
\begin{align*}
(\shP_{K,t})_{x}&=(e^{-it\shL_{X_{\varphi}}}\shP_{K})_{x}\\
&= \Span_{\CC} \left\{\frac{\partial }{\partial \theta_{1}} , \cdots, \frac{\partial }{\partial \theta_{p}}, X_{{w}_{p+1}^{t}},\cdots, X_{{w}_{n-l}^{t}} ,X_{{z}_{1}^{t}},\cdots,X_{{z}_{l}^{t}}  \right\}\\
 &= \Span_{\CC} \left\{\frac{\partial }{\partial \theta_{1}} , \cdots, \frac{\partial }{\partial \theta_{p}}, \frac{\partial }{\partial \bar{w}_{p+1}^{t}},\cdots, \frac{\partial }{\partial \bar{w}_{n-l}^{t}},\frac{\partial }{\partial \bar{z}_{1}^{t}},\cdots, \frac{\partial }{\partial \bar{z}_{l}^{t}} \right\}\\
 &=\left(\left((\ker d\mu_{K} \otimes \CC)\cap \shP_{t} \right) \oplus (\Im d\rho_{K}\otimes \CC)\right)_{x}
\end{align*}
This implies that $\shP_{K,t}=\left((\ker d\mu_{K} \otimes \CC)\cap \shP_{t} \right) \oplus (\Im d\rho_{K}\otimes \CC)$.
Thus, by \cite[Theorem 1.1]{LW1}, 
$\shP_{K,t}$ is a (singular) mixed polarization on $X$.
\end{proof}

\subsubsection{Degeneration of the mixed polarizations}
Now, we show (see Theorem \ref{plim}) that the one-parameter family of mixed polarizations $\shP_{K,t}$ interpolates between the mixed polarization $\shP_{K}$ and the real polarization $\shP_{\RR}$, i.e. $\shP_{K,0}=\shP_{K}$ and $\lim_{t\rightarrow \infty} \shP_{K,t}=\shP_{\RR}$. 

We begin with the following lemma, which will be used in the proof of Theorem \ref{plim}.

\begin{lemma}\label{lem-com}
    Let $V$ be a finite-dimensional real vector space and let $W \subset V$ be a linear subspace. Let $\{P_t\}_{t \in \RR}$ be a continuous family of subspaces of $V$, all of the same dimension, such that the limit $P_\infty := \lim_{t \rightarrow \infty} P_t$ exists and $\dim P_\infty = \dim P_t$ for all $t \in \mathbb{R}$. Further suppose that $\dim (W \cap P_\infty) = \dim (W \cap P_t)$ for all $t \in \mathbb{R}$. Then
    $$\lim_{t \rightarrow \infty} (W \cap P_t) = W \cap P_\infty \quad \text{and} \quad \lim_{t \rightarrow \infty} (P_t + W) = P_\infty + W.$$
\end{lemma}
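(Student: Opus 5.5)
The plan is to work in the Grassmannians $\mathrm{Gr}(r,V)$, which are compact, and to use the standard description of convergence there. Fix an inner product on $V$ and write $\pi_A$ for the orthogonal projection onto a subspace $A$. Then $A_t\to A$ in $\mathrm{Gr}(r,V)$ is equivalent to $\pi_{A_t}\to\pi_A$ in operator norm. From this I will extract two elementary facts.

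\emph{(i) Closedness.} If $A_t\to A$, $v_t\in A_t$ and $v_t\to v$, then $v\in A$. This holds because $v_t=\pi_{A_t}v_t\to\pi_A v$, so $v=\pi_A v$.

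\emph{(ii) Approximation.} If $A_t\to A$ and $v\in A$, then $v_t:=\pi_{A_t}v\in A_t$ satisfies $v_t\to\pi_A v=v$.

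The argument will then be the usual one: by compactness, each family has limit points; each limit point is squeezed by containment and a dimension count; so the limit point is unique.

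\emph{Intersection.} Set $r:=\dim(W\cap P_t)=\dim(W\cap P_\infty)$, which is independent of $t$ by hypothesis. Take any sequence $t_j\to\infty$. By compactness of $\mathrm{Gr}(r,V)$, after passing to a subsequence we may assume $W\cap P_{t_j}\to S$ for some $r$-dimensional subspace $S$. Let $v\in S$. By (ii) there are $v_j\in W\cap P_{t_j}$ with $v_j\to v$. Since $v_j\in W$ and $W$ is closed, $v\in W$. Since $v_j\in P_{t_j}$ and $P_{t_j}\to P_\infty$, fact (i) gives $v\in P_\infty$. Hence $S\subseteq W\cap P_\infty$, and equality of dimensions forces $S=W\cap P_\infty$. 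So every sequence $t_j\to\infty$ has a subsequence along which $W\cap P_{t_j}\to W\cap P_\infty$. In a metric space (the compact Grassmannian), this implies $\lim_{t\to\infty}(W\cap P_t)=W\cap P_\infty$.

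\emph{Sum.} By the dimension formula,
\[
\dim(P_t+W)=\dim P_t+\dim W-\dim(W\cap P_t),
\]
and the right-hand side equals $\dim(P_\infty+W)$ by the two dimension hypotheses. Call this common value $s$. Again take $t_j\to\infty$ and a subsequence with $P_{t_j}+W\to S'$ in $\mathrm{Gr}(s,V)$.
\begin{itemize}
\item For $w\in W$, the constant sequence $w$ lies in $P_{t_j}+W$, so $w\in S'$ by (i).
\item For $v\in P_\infty$, fact (ii) applied to $P_{t_j}\to P_\infty$ gives $v_j\in P_{t_j}\subseteq P_{t_j}+W$ with $v_j\to v$, so $v\in S'$ by (i).
\end{itemize}
Thus $P_\infty+W\subseteq S'$, and since both have dimension $s$, they are equal. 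The same subsequence argument as before then yields $\lim_{t\to\infty}(P_t+W)=P_\infty+W$.

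The only delicate point is that containment of limits goes in opposite directions in the two cases. For the intersection we get "limit $\subseteq$", which is upper semicontinuity. For the sum we get "limit $\supseteq$", which is lower semicontinuity. In each case the constant-dimension hypothesis is exactly what upgrades the one-sided containment to equality; without it, the dimension could jump at $t=\infty$ and the statement would fail. Continuity of $t\mapsto P_t$ is not really needed beyond making sense of the limit.
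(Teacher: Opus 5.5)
Your proof is correct, and it takes a different route from the paper's.

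\textbf{The paper's argument.} The paper works on the constant-dimension locus $\Omega=\{P\in\mathrm{Gr}_p(V):\dim(P\cap W)=\ell\}$ and builds an explicit affine chart around $P_\infty$.
\begin{itemize}
\item It splits $V=P_\infty\oplus\widetilde W\oplus L$, where $\widetilde W$ complements $P_\infty\cap W$ in $W$ and $L$ complements $P_\infty+W$ in $V$.
\item A nearby $P$ is then the graph of some $\varphi=\varphi_{\widetilde W}+\varphi_L\in\Hom(P_\infty,\widetilde W\oplus L)$.
\item The condition $P\in\Omega$ becomes the linear condition $\varphi_L|_{P_\infty\cap W}=0$.
\item In adapted charts on the target Grassmannians, $P\mapsto P\cap W$ becomes $\varphi\mapsto\varphi_{\widetilde W}|_{P_\infty\cap W}$, and $P\mapsto P+W$ becomes $\varphi\mapsto\varphi_L$, viewed on $P_\infty\oplus\widetilde W$ and vanishing on $\widetilde W$.
\end{itemize}
Continuity at $P_\infty$ is therefore immediate, because both maps are linear projections in these coordinates.

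\textbf{Your argument.} You argue softly instead.
\begin{itemize}
\item Compactness of the Grassmannians gives subsequential limits.
\item Your facts (i) and (ii) give upper semicontinuity of $P\mapsto P\cap W$: any limit is contained in $W\cap P_\infty$.
\item They also give lower semicontinuity of $P\mapsto P+W$: any limit contains $P_\infty+W$.
\item The dimension hypotheses upgrade these one-sided inclusions to equalities.
\item Uniqueness of subsequential limits in a compact metric space then gives convergence.
\end{itemize}

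\textbf{What each buys.} Your route is coordinate-free and uses only the projection description of the Grassmannian topology. It also shows exactly where the constant-dimension hypothesis enters, and you correctly note that continuity in $t$ plays no role. The paper's route proves more. It shows $\Omega$ is locally an affine subspace in a chart, and that both maps are smooth on $\Omega$ (indeed linear in charts). This gives quantitative control: $W\cap P_t$ and $P_t+W$ converge at least as fast as $P_t$, up to a constant.

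Both arguments carry over verbatim to complex subspaces; in your case you would use a Hermitian inner product. This matters because the lemma is actually applied to complex subspaces in the proof of Theorem~\ref{plim}.
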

\begin{proof}
    Let $p$ denote the common dimension of $P_t$ and of $P_\infty$, and let $\ell$ denotes the common dimension of $W \cap P_t$ and of $W \cap P_\infty$. Consider the Schubert-type locus
	\[
	\Omega = \{\, P \in \mathrm{Gr}_p(V) \mid \dim(P \cap W) = \ell \,\},
	\]
    where $\mathrm{Gr}_p(V)$ denotes the Grassmannian of $p$-planes in $V$. By assumption, $P_t \in \Omega$ for all $t$ and $P_\infty \in \Omega$. It therefore suffices to prove that the maps
	\begin{align*}
		\pi: \Omega \to \mathrm{Gr}_{\ell}(W), & \quad P \mapsto P \cap W,\\
		\Phi: \Omega \to \mathrm{Gr}_{p + a - \ell}(V), & \quad P \mapsto P + W,
	\end{align*}
    are continuous at $P_\infty \in \Omega$, where $a$ denotes the dimension of $W$.\par
	Choose a subspace $\widetilde{W} \subset W$ complementary to $P_\infty \cap W$ and a subspace $L \subset V$ complementary to $P_\infty + W$. Then, $V = P_\infty \oplus \widetilde{W} \oplus L$. Let
    $$U_{P_\infty} := \{ P \in \mathrm{Gr}_p(V): P \cap (\widetilde{W} \oplus L) = \{0\} \}$$
    be the standard affine neighbourhood of $P_\infty$ in $\mathrm{Gr}_p(V)$. Via the graph parametrization, $U_{P_\infty}$ is identified with $\Hom(P_\infty,\widetilde{W}\oplus L)$ by $\varphi \mapsto \operatorname{Graph}(\varphi) = \{ v + \varphi(v): v \in P_\infty\}$. Writing $\varphi=\varphi_{\widetilde{W}}+\varphi_L$ according to the decomposition $\widetilde{W }\oplus L$, one checks that
    \begin{equation*}
        \Omega \cap U_{P_\infty} \cong \{ \varphi \in \operatorname{Hom}(P_\infty, \widetilde{W} \oplus L): \varphi_L \vert_{P_\infty \cap W} = 0 \}.
    \end{equation*}

    Similarly, the neighbourhood
	\begin{equation*}
		U_{P_\infty \cap W} := \{ P \in \mathrm{Gr}_{\ell}(W): P \cap \widetilde{W} = \{0\} \}
	\end{equation*}
	of $P_\infty \cap W$ in $\mathrm{Gr}_{\ell}(W)$ is identified with $\operatorname{Hom}(P_\infty \cap W, \widetilde{W})$, and the neighbourhood
	\begin{equation*}
		U_{P_\infty + W} := \{ P \in \mathrm{Gr}_{p + a - \ell}(V): P \cap L = \{0\} \}
	\end{equation*}
	of $P_\infty + W$ in $\mathrm{Gr}_{p + a - \ell}(V)$ is identified with $\operatorname{Hom}(P_\infty \oplus \widetilde{W}, L)$. Under these identifications, for $\varphi\in\Omega\cap U_{P_\infty}$ we obtain
	$$\pi(\varphi) = \varphi_{\widetilde{W}} \vert_{P_\infty \cap W} \quad \text{and} \quad \Phi(\varphi)(v + w) = \varphi_L(v) \quad \text{for all } v \in P_\infty \text{ and } w \in \widetilde{W}.$$
    In particular, both $\pi$ and $\Phi$ are given by linear projections in these affine coordinates and are therefore continuous at $P_\infty$. This completes the proof.
\end{proof}

\begin{theorem}\label{plim}
Under assumption $(*)$, we have:
$$\lim_{t \rightarrow \infty} \shP_{K,t}=\shP_{\RR}.$$
\end{theorem}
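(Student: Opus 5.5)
Our strategy is to deduce the limit of $\shP_{K,t}$ pointwise from the known limit of the Kähler polarizations $\shP_t$, using Lemma~\ref{lem-com} fiberwise. By the proof of Theorem~\ref{thm1},
\[
\shP_{K,t}=\left((\ker d\mu_{K}\otimes\CC)\cap \shP_{t}\right)\oplus(\Im d\rho_{K}\otimes\CC),
\]
where $\shP_t$ is the Kähler polarization of $J_t$, obtained from $J$ by the imaginary-time flow of $X_\varphi$. The first ingredient is the limit of $\shP_t$. On $\mathring{M}$ with action-angle coordinates, $J_t$ is the complex structure of the symplectic potential $g+t\,\phi\circ i_Q^*$. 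Arguing as in \cite{BFMN} (and \cite{MNGW} for the partial case), we expect $\lim_{t\to\infty}\shP_t=\shP_{K^\perp}$. Here $\shP_{K^\perp}$ denotes the limiting distribution in which the $Q$-directions become real, i.e.\ spanned by $\partial/\partial\theta$'s, while the remaining directions stay holomorphic. Only the combination with $\ker d\mu_K$ and $\Im d\rho_K$ will matter.

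We work at a fixed point $x$, in the local model of Theorem~\ref{thm1}, with coordinates $w_j=e^{\rho_j+i\theta_j}$ and $z_k$. By Lemma~\ref{local coordinate lem}, $(\shP_t)_x$ is spanned by the $\partial/\partial\bar w^t_j$ and $\partial/\partial\bar z^t_k$. Their explicit form is
\[
\frac{\partial}{\partial\bar w_j^{t}}\;\propto\;\frac{\partial}{\partial\rho_j}+i\,(G+tH)^{-1}\text{-type combinations of }\frac{\partial}{\partial\theta},
\]
where $H$ is the (positive-definite on the $Q$-block) Hessian of $\phi$. Normalizing by $t$ and sending $t\to\infty$ yields the $Q$-directions $\partial/\partial\theta_\gamma$, $\gamma=p+1,\dots,p+q$. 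Working in the Grassmannian of the real $2n$-dimensional space $T_xM\otimes\CC$ viewed as a real vector space (or directly with the complex analogue of Lemma~\ref{lem-com}, whose proof is identical), we then apply Lemma~\ref{lem-com} twice. First, with $W=\ker d\mu_K\otimes\CC$ and $P_t=(\shP_t)_x$, it gives $\lim(W\cap P_t)=W\cap P_\infty$. Second, adding the fixed subspace $\Im d\rho_K\otimes\CC$ gives $\lim(\shP_{K,t})_x=(W\cap P_\infty)+\Im d\rho_K\otimes\CC$. Finally, we identify this with $(\shP_{\RR})_x=\bigl((\ker d\mu\otimes\CC)\cap\shP_J\bigr)\oplus(\Im d\rho\otimes\CC)$ at $x$. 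This uses the splitting $T^n=K\times Q$: the $K$-directions come from $\Im d\rho_K$, the $Q$-directions from the limit, and the directions along the $z_k$ collapse to the real span forced by the stabilizer.

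The main obstacle is verifying the constant-dimension hypotheses of Lemma~\ref{lem-com}. We need $\dim(W\cap P_t)$ to be independent of $t$ and equal to $\dim(W\cap P_\infty)$, and similarly for the sum, especially at singular points where $l>0$ and the $z_k$ directions degenerate. For finite $t$, Theorem~\ref{thm1} shows $\shP_{K,t}$ has the same span structure as $\shP_K$, giving constancy. For $t=\infty$ we would compute $W\cap P_\infty$ explicitly in the local coordinates and count dimensions.

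If the count fails at points of $M\setminus\mathring{M}$, the fallback is to prove the limit on the open dense $\mathring{M}$ (or $\check M$) first. There all stabilizers are trivial and the computation is a clean Hessian-matrix limit. We would then interpret the convergence on the singular locus via the local-model description, in the same sense as for the singular real polarization $\shP_\RR$.
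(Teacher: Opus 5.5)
Your outline follows the paper's: the formula for $\shP_{K,t}$ from Theorem~\ref{thm1}, the limit of $\shP_t$, Lemma~\ref{lem-com} applied fiberwise, then identification with $\shP_\RR$. The gap is that the two steps that carry the proof are left as ``we would compute'': the identification of the limit with $\shP_\RR$, and the dimension hypotheses of Lemma~\ref{lem-com}. The heuristic you give for the identification is also wrong.

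The missing idea is an invariant, pointwise computation, in three moves.

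\emph{Step 1.} Take $\lim_{t\to\infty}\shP_t=\bigl((\ker d\mu_Q\otimes\CC)\cap\shP_J\bigr)\oplus(\Im d\rho_Q\otimes\CC)$ at \emph{every} point of $M$. The paper cites \cite[Theorem 3.21]{LW1}; your Hessian sketch only gives this on $\mathring M$.

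\emph{Step 2.} Use $\Im d\rho_Q\subset\ker d\mu_K$ (torus orbits are isotropic) to move the intersection inside, and $\ker d\mu_K\cap\ker d\mu_Q=\ker d\mu$ (from $\fot=\fot_K\oplus\fot_Q$). This gives
\[
(\ker d\mu_K\otimes\CC)\cap\lim_{t\to\infty}\shP_t=\bigl((\ker d\mu\otimes\CC)\cap\shP_J\bigr)\oplus(\Im d\rho_Q\otimes\CC).
\]

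\emph{Step 3.} Add $\Im d\rho_K\otimes\CC$ and use $\Im d\rho=\Im d\rho_Q+\Im d\rho_K$; the result is exactly $\shP_\RR$. The paper then gets the dimension hypothesis by comparing with $\dim(\shP_{K,t})_x=\dim(\shP_\RR)_x=n$.

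Your claim that the $z_k$-directions ``collapse to the real span forced by the stabilizer'' is incorrect. At $x$ we have $z=0$, so $dz_k^t$ is a nonzero multiple of $dz_k$ there, and $\partial/\partial\bar z_k\in(\shP_t)_x$ for all $t$. In $(\shP_\RR)_x$ these directions form precisely the summand $(\ker d\mu\otimes\CC)\cap\shP_J=\Span_\CC\{\partial/\partial\bar z_1,\dots,\partial/\partial\bar z_l\}$; nothing becomes real.

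Your fallback does not prove the statement either. The theorem asserts a fiberwise limit at every $x\in M$. Convergence over the dense set $\mathring M$ says nothing about the fibers over $M\setminus\mathring M$, where these singular distributions jump.

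Your worry about the dimension count is nevertheless well founded. The paper deduces $\dim(W\cap P_t)=\dim(W\cap P_\infty)$ from both fibers having dimension $n$, which tacitly uses $(\Im d\rho_Q)_x\cap(\Im d\rho_K)_x=0$. This holds on $\mathring M$ but not at every singular point. Here is an example.

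Take $M=\CC P^1\times\CC P^1$, $K$ the diagonal circle, $Q=\{(1,e^{i\beta})\}$, and $x=(0,z_2)$ with $z_2=e^{\rho_2+i\theta_2}\neq 0$. At $x$ both orbit directions equal $\RR\,\partial/\partial\theta_2$, and $W=(\ker d\mu\otimes\CC)_x$. One computes $(\shP_t)_x=\Span_\CC\{\partial/\partial\bar z_1,\ \partial/\partial\rho_2+i(1+\kappa t)\partial/\partial\theta_2\}$ with $\kappa>0$. Hence $W\cap(\shP_t)_x=\Span_\CC\{\partial/\partial\bar z_1\}$ for every finite $t$, while $W\cap P_\infty=\Span_\CC\{\partial/\partial\bar z_1,\partial/\partial\theta_2\}$. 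So intersection does not commute with the limit, and the two-step use of Lemma~\ref{lem-com} fails. The conclusion survives at this $x$ only because $(\shP_{K,t})_x=(\shP_\RR)_x$ for all $t$.

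At such points one has to pass to the limit in the sum directly. Let $V=T_x(T^n\cdot x)$ and let $V_K,V_Q\subset V$ be the $K$- and $Q$-orbit directions. On $V\oplus JV$ one has
\[
(\shP_{K,t})_x=\{-Ju+i(u+tAu):u\in V,\ u\perp_g V_K\}+V_K\otimes\CC.
\]
Here $A$ is a $g$-self-adjoint, positive semidefinite endomorphism of $V$ with image $V_Q$, coming from the Hessian of $\phi$. Strict convexity gives $g(Au,u)>0$ for $0\neq u\perp_g V_K$, because $V_K+V_Q=V$. So no such $u$ has $Au\in V_K$. The limit is therefore $V\otimes\CC$ plus $\Span_\CC\{\partial/\partial\bar z_k\}$, which is $(\shP_\RR)_x$.
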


\begin{proof}
By the proof of Theorem \ref{thm1}, we know that
\begin{align*}
   \shP_{K,t}=\left((\ker d\mu_{K} \otimes \CC)\cap \shP_{t} \right) \oplus (\Im d\rho_{K}\otimes \CC),
\end{align*}
where $\shP_{t}$ denotes the K\"ahler polarization obtained from the imaginary-time flow $e^{-itX_\varphi}$. We first show that
\begin{equation}\label{eq: desired P_R}
    \shP_{\RR} = \left( (\ker d\mu_{K} \otimes \CC) \cap \left(\lim_{t \rightarrow \infty} \shP_{t}\right) \right) \oplus (\Im d \rho_{K} \otimes \CC).
\end{equation}
Recall that $\shP_{\RR} = \left((\ker d\mu\otimes \CC) \cap \shP_{J}\right) \oplus (\Im d \rho \otimes \CC)$. Since $\Im d \rho = \Im d \rho_{Q} \oplus \Im d \rho_{K}$,
\begin{equation}\label{eq:limit-claim}
    \shP_{\RR} = \left((\ker d\mu\otimes \CC) \cap \shP_{J}\right) \oplus (\Im d \rho_{Q} \otimes \CC) \oplus (\Im d \rho_{K} \otimes \CC).
\end{equation}
By \cite[Theorem 3.21]{LW1}, we have
\begin{equation*}
\lim_{t \rightarrow \infty} \shP_{t}=\left((\ker d \mu_{Q}\otimes \CC) \cap \shP_{J}\right)\oplus (\Im d \rho_{Q} \otimes \CC).
\end{equation*}
Since $\Im d \rho_{Q} \subset \ker d\mu_{K}$, 
it follows that
\begin{align*}
    (\ker d\mu_{K} \otimes \CC) \cap \left(\lim_{t \rightarrow \infty} \shP_{t}\right) = & \left((\ker d\mu_{K} \otimes \CC) \cap (\ker d\mu_{Q} \otimes \CC) \cap \shP_{J}\right) \oplus (\Im d \rho_{Q} \otimes \CC)\\
    = & \left((\ker d\mu\otimes \CC) \cap \shP_{J}\right) \oplus (\Im d \rho_{Q} \otimes \CC).
\end{align*}
Substituting this into \eqref{eq:limit-claim} yields the desired decomposition of $\shP_{\RR}$ in \eqref{eq: desired P_R}.

Finally, from the coordinate frame descriptions of $(\shP_{K,t})_{x}$ in the proof of Theorem~\ref{thm1} and of $(\shP_{\RR})_{x}$ in \cite[Theorem 3.21]{LW1}, we see that for every $x \in M$ both fibers have dimension $n$. By Lemma~\ref{lem-com}, intersections with the fibxed subspace $(\ker d\mu_{K} \otimes \CC)_{x}$ commute with taking limits. Also, direct sums with the fixed subspace $(\Im d \rho_{K} \otimes \CC)_{x}$ are preserved under limits. Therefore,
$$\lim_{t \rightarrow \infty} \shP_{K,t} = \left( (\ker d\mu_{K} \otimes \CC) \cap \left(\lim_{t \rightarrow \infty} \shP_{t}\right) \right) \oplus (\Im d \rho_{K} \otimes \CC) = \shP_{\RR}.$$
This completes the proof.
\end{proof}
\subsection{Convergence of quantum Hilbert spaces}
In this section, we show that the one-parameter family of quantum Hilbert
spaces associated with the mixed polarizations converges to the quantum Hilbert space for the real polarization.

Let $L$ be the $T^n$-equivariant prequantum line bundle of $(M, \omega)$ determined by the polytope $P$. Consider the natural embedding of smooth sections of $L$ into distributional
sections of $L$, defined using the Liouville measure: for any open set
$U\subset M$, 
\begin{align*}
i: \Gamma(U, L) & \rightarrow \Gamma_{c}(U,L^{-1})'\\
s & \mapsto i(s)(\phi)= \int_{U} \langle s, \phi \rangle \cdot \frac{1}{n!} \omega^n,
\end{align*}
where $\phi\in \Gamma_c(U,L^{-1})$ denotes a test section.

Following \cite{BFMN,W1}, we extend the operator $\nabla_{\xi}$ from smooth
sections to distributional sections, and we denote this extension by the same
symbol $\nabla_{\xi}$. 
For a general distributional section $\delta \in \Gamma_{c}(M,L^{-1})'$, 
the distribution $\nabla_{\xi}\delta$ is
defined by duality. Namely,
\begin{equation}\label{eqcon}
(\nabla_{\xi}\delta)(\tau)=\delta\!\left({}^{t}\nabla_{\xi}\tau\right),
\end{equation}
for every test section $\tau\in \Gamma_{c}(M,L^{-1})$. Here ${}^{t}\nabla_{\xi}$
denotes the transpose of $\nabla_{\xi}$ with respect to the Liouville measure, given by
\begin{equation}\label{trans-conn}
{}^{t}\nabla_{\xi}\tau
=
-\bigl(\operatorname{div}(\xi)\,\phi+\nabla_{\xi}^{-1}\tau\bigr),
\end{equation}
where $\operatorname{div}(\xi)$ denotes the divergence of $\xi$ with respect to the Liouville measure.

Given a polarization $\shP$, we define the associated quantum Hilbert space by
\begin{equation}
\shH
=
\Bigl\{\delta\in \Gamma_{c}(M,L^{-1})' \ \Big|\ 
\nabla_{\xi}\delta=0 \text{ for all } \xi\in \Gamma(M,\shP)\Bigr\}.
\end{equation}

In particular, for the real polarization
$\shP_{\RR}$ defined by equation (\ref{def-realp}) on a toric variety $M$, by \cite[Proposition 3.1]{BFMN}, the associated quantum Hilbert space $\shH_{\RR}$
admits a canonical basis
$\{\delta^{m}\}_{m\in P\cap \mathfrak{t}_{\mathbb{Z}}^*}$, indexed by integral points in the moment polytope $P$. The element $\delta^{m}$ is
the distributional section supported on the Bohr--Sommerfeld fiber
$\mu^{-1}(m)$, characterized as follows: for any $T^{n}$-invariant open subset
$W\subset M$ containing $\mu^{-1}(m)$,
\begin{equation}\label{eqn:delta^m(tau)}
\delta^{m}(\tau)
=
\int_{\mu^{-1}(m)} e^{i\langle \ell(m),\theta\rangle}\, \tau_v \vol_{\mu^{-1}(m)},
\qquad
\forall\, \tau\in \Gamma_c(W,L^{-1}).
\end{equation}
Here, $\vol_{\mu^{-1}(m)}$ is a $T^{n}$-invariant volume form on the torus
$\mu^{-1}(m)$, to be defined in \eqref{fibre-volume}. The map $\ell(x)=Ax-\lambda$ is the affine change of variables,
where $A=((\nu_i)_j)$ and $\nu_j,\lambda_j$ are as in \eqref{eqn:l_r}. The function $\tau_v$ is determined as follows. Choose a vertex $v\in P$
such that $m$ lies in the union $\mathring{P}_v$ of the relative interiors of all
faces whose closure contains $v$. 
Trivialize $L^{-1}$ over
$V_v:=\mu^{-1}(\mathring{P}_v)$ by a local frame $1_v$. Then on $V_v$, write $\tau=\tau_v\cdot 1_v$.

Let $\{\sigma^{m}\}_{m\in P\cap \mathfrak{t}_{\ZZ}^{*}}$ be a basis of quantum space $\shH$ associated with $J$. For the mixed polarization $\shP_{K}$ defined by formula (\ref{def-mixp}),
by \cite[Theorem~3.2]{W1}, the associated quantum Hilbert space $\shH_{K}$
admits a canonical basis $\{\delta_{K}^{m}\}_{m\in P\cap \mathfrak{t}_{\ZZ}^{*}}$,
where $\delta_{K}^{m}$ is defined by
\begin{equation}\label{dis-basis0}
\delta_{K}^{m}(\tau)
:=
\int_{\check{M}^{m}} \bigl\langle \sigma^{m},\tau\bigr\rangle\,
\vol_{K}^{m}
\end{equation}
for every test section $\tau\in \Gamma_c(M,L^{-1})$, where 
$\vol_K^{m}$ is the volume form given in Proposition \ref{decomp-vol}.

Recall from our standing assumption (*) that $\phi\colon \mathfrak{t}_{Q}^{*}\to \RR$ is a strictly convex function, and $X_{\varphi}$ is the
Hamiltonian vector field of the smooth function $\varphi=\phi\circ \mu_{Q}$. For each $t > 0$, let $\shP_{t}$ denote the K\"ahler polarization associated with the complex structure $J_t$ defined by the imaginary-time flow $e^{-itX_{\varphi}}$ of $J$. 
Let $\shP_{K,t}$ be the polarization given in Theorem~\ref{thm1}. From the
proof of the same theorem we have
\begin{equation}\label{mixp-fam}
\shP_{K,t}
=
\bigl((\ker \d\mu_{K}\otimes \CC)\cap \shP_{t}\bigr)\oplus
(\Im \d\rho_{K}\otimes \CC).
\end{equation}
In order to study the relationship between the associated quantum spaces $\shH_{K,t}$, 
we extend the Kostant-Souriau operator $\hat{\varphi}$ associated with $\varphi=\phi\circ \mu_{Q}$ to distributional sections using the same formula and notation. 

Namely,
\begin{equation}\label{eq2-0-9}
\hat{\varphi}\colon \Gamma_{c}(M,L^{-1})'\to \Gamma_{c}(M,L^{-1})',\qquad
\delta\mapsto \hat{\varphi}\delta:=-i\nabla_{X_{\varphi}}\delta+\varphi \delta.
\end{equation}

\begin{definition}\label{def-imaginary-flow}
For $\delta \in \Gamma_{c}(M,L^{-1})'$, if the Lie series
$ 
\sum_{j=0}^{\infty} \frac{t^{j}}{j!}\hat{\varphi}^{j}\delta
$ 
converges absolutely and uniformly on compact subsets of $M \times \mathbb{R}$ in the sense that, for every $\tau \in \Gamma_{c}(M,L^{-1})$, the series
$ 
\sum_{j=0}^{\infty} \frac{t^{j}}{j!}(\hat{\varphi}^{j}\delta)(\tau)
$ 
does so, then we denote its sum by $e^{t\hat{\varphi}}\delta$ and say that $e^{t\hat{\varphi}}$ can be applied to $\delta$.
\end{definition}

\begin{theorem}\label{imaginary-flow}
Under assumption $(*)$, for any $t >0$,  $e^{t\hat{\varphi}} : \shH_{K,0} \rightarrow \shH_{K,t}$ is a $T^{n}$-equivariant linear isomorphism.
\end{theorem}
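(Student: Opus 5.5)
We prove the statement by computing $e^{t\hat{\varphi}}$ explicitly on the canonical basis $\{\delta_K^m\}$ from \cite[Theorem~3.2]{W1}. We then check that the images are $\shP_{K,t}$-polarized, linearly independent, and span $\shH_{K,t}$.

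\textbf{Step 1: explicit action on the basis.} Each $\delta_K^m$ is a distribution given by integrating $\langle\sigma^m,\tau\rangle$ against $\vol_K^m$. Here $\sigma^m$ is the $J$-holomorphic section of weight $m$, and it is $T^n$-equivariant of weight $m$. Hence for $\xi$ in the Lie algebra of $T^n$ we have $\nabla_{\xi^\#}\sigma^m = i\langle m,\xi\rangle\sigma^m$ up to the moment-map term. Since $X_\varphi=\sum_\gamma \frac{\partial\varphi}{\partial\mu_\gamma}\xi_\gamma^\#$ and the coefficients are $T^n$-invariant, the Kostant--Souriau operator acts on $\sigma^m$ by multiplication by a function. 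Explicitly,
\[
\hat\varphi\,\sigma^m=\Bigl(\varphi-\sum_\gamma \tfrac{\partial\varphi}{\partial\mu_\gamma}(\mu_\gamma-m_\gamma)\Bigr)\sigma^m=:\psi_m\,\sigma^m .
\]
The function $\psi_m$ is $Q$-invariant, so it is annihilated by $X_\varphi$. The operator $\hat\varphi$ is defined on distributions by duality, and by \eqref{trans-conn} this is compatible with the smooth computation. Moreover, $\vol_K^m$ and $\check M^m$ are $T^n$-invariant, so $\nabla_{X_\varphi}$ passes through the integral. By induction, exactly as in Lemma~\ref{local coordinate lem},
\[
(\hat\varphi^j\delta_K^m)(\tau)=\int_{\check M^m}\psi_m^j\langle\sigma^m,\tau\rangle\vol_K^m .
\]
For compactly supported $\tau$ the function $\psi_m$ is bounded on $\operatorname{supp}\tau$, so the Lie series converges absolutely and locally uniformly in $t$. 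This gives
\[
\delta_{K,t}^m(\tau)=\int_{\check M^m} e^{t\psi_m}\langle\sigma^m,\tau\rangle\vol_K^m .
\]
This is the explicit formula \eqref{dis-basis}. The key point is that $e^{t\psi_m}\sigma^m$ is the $J_t$-holomorphic weight-$m$ section, as in \cite{LW1,MN}.

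\textbf{Step 2: polarization.} We show $\delta_{K,t}^m\in\shH_{K,t}$ using the description \eqref{mixp-fam}. Work locally in the holomorphic-slice coordinates from the proof of Theorem~\ref{thm1}. There $(\shP_{K,t})_x$ is spanned by $\partial_{\theta_1},\dots,\partial_{\theta_p}$ together with $\partial/\partial\bar w_j^t$ and $\partial/\partial\bar z_k^t$. Since $e^{t\psi_m}\sigma^m$ is $J_t$-holomorphic, it is killed by the latter vectors. These vectors are tangent to the level sets of $\mu_K$, and $\check M^m$ is a union of such level sets (it lies in a $\mu_K$-fiber), so no boundary terms arise. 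The $\partial_{\theta}$ directions are handled by the $K$-weight of $\sigma^m$ together with the factor in $\vol_K^m$ that realizes the Bohr--Sommerfeld condition for $K$, exactly as in the $t=0$ argument of \cite{W1}. The cleanest route is to conjugate: for $\xi\in\Gamma(\shP_{K,t})$ write $\xi=e^{-it\shL_{X_\varphi}}\xi_0$ with $\xi_0\in\Gamma(\shP_K)$. Then use the operator identity
\[
e^{t\hat\varphi}\nabla_{\xi_0}e^{-t\hat\varphi}=\nabla_{e^{-it\shL_{X_\varphi}}\xi_0},
\]
which holds formally from $[\hat\varphi,\nabla_\xi]=\nabla_{[-iX_\varphi,\xi]}$ plus a term $\xi(\varphi)$ (that term vanishes on the relevant frames). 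This yields $\nabla_\xi\delta_{K,t}^m=0$. \textbf{We expect the main obstacle to be here:} justifying this conjugation identity on distributions when the series only converges after pairing. We would prove it first on the local frames of Theorem~\ref{thm1}, using Lemma~\ref{local coordinate lem}, and then pass to general sections by $C^\infty$-linearity.

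\textbf{Step 3: isomorphism and equivariance.} $T^n$-equivariance follows because $\varphi$ and $X_\varphi$ are $T^n$-invariant, so $\hat\varphi$ commutes with the $T^n$-action. Injectivity holds because the $\delta_{K,t}^m$ have distinct $T^n$-weights $m$ and each is nonzero. Nonvanishing follows by testing against $\tau$ concentrated near a point of $\check M^m$, using $e^{t\psi_m}>0$. For surjectivity, apply the dimension count of \cite[Theorem~3.2]{W1} to the polarization $\shP_{K,t}$. Its hypotheses are satisfied because, by the proof of Theorem~\ref{thm1}, $\shP_{K,t}$ is the mixed polarization built from the toric complex structure $J_t$ (with symplectic potential $g+t\phi\circ i_Q^*$). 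This gives $\dim\shH_{K,t}=\#(P\cap\fot^*_\ZZ)=\dim\shH_{K,0}$, so the map is a linear isomorphism.
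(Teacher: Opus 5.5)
Your Steps 1 and 3 follow the paper's proof. The paper also moves $\hat\varphi$ onto $\sigma^m$ inside the integral over $\check M^m$. Your remark that $X_\varphi=\sum_j\mu_Q^*(\partial\phi/\partial x_j)\,\xi_j^{\#}$ is tangent to $\check M^m$ and divergence-free for the $T^n$-invariant form $\vol_K^m$ is what justifies this move; the paper leaves it implicit. The paper then sums the Lie series to $\int_{\check M^m}\langle\sigma_t^m,\tau\rangle\vol_K^m$, uses \cite[Theorem~3.7]{LW3} to know that $\sigma_t^m=e^{t\hat\varphi}\sigma^m$ is the $J_t$-holomorphic weight-$m$ section, and invokes \cite[Theorem~3.2]{W1} for $J_t$ via \eqref{mixp-fam}.

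Where you differ is Step 2, and as written it is a hole rather than a proof. The conjugation identity is left unproved, and your plan for it does not work: checking on the frame from the proof of Theorem~\ref{thm1} and extending by $C^\infty$-linearity fails. That frame is adapted to the stabilizer at the chosen point $x$. At nearby points where the $K$-stabilizer is smaller, $(\shP_{K,t})_y$ is not the span of that frame: its real rank is $k$ there, versus $p$ at $x$. So general smooth sections of the singular distribution $\shP_{K,t}$ are not $C^\infty$-combinations of it. Your other sketch ("as in the $t=0$ argument of \cite{W1}") amounts to re-proving \cite[Theorem~3.2]{W1} for $J_t$.

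The fix is already in your Step 3. There you use \cite[Theorem~3.2]{W1} for $J_t$ only to count dimensions, but the theorem gives more. For the mixed polarization built from $J_t$, it says the distributions $\tau\mapsto\int_{\check M^m}\langle s^m,\tau\rangle\vol_K^m$ form a basis of the quantum space, where $s^m$ is the $J_t$-holomorphic weight-$m$ section. Two facts then finish the argument: $\vol_K^m$ depends only on $\omega$ and $K$, not on the complex structure, and $s^m$ is a multiple of $\sigma_t^m=e^{t\psi_m}\sigma^m$. Together these show at once that the $\delta_{K,t}^m$ lie in $\shH_{K,t}$ and form a basis. This is exactly how the paper concludes, and it makes both Step 2 and your separate weight argument for injectivity unnecessary.
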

\begin{proof}
Note that, the polarization $\shP_{K,0}$ coincides with mixed polarization $\shP_{K}$ defined by  formula \ref{def-mixp}.
Let $\{\sigma^{m}\}_{m\in P\cap \mathfrak{t}_{\ZZ}^{*}}$ be a basis of quantum space $\shH$ associated with $J$. 
By \cite[Theorem~3.2]{W1}, the associated quantum Hilbert space $\shH_{K}$
admits a canonical basis $\{\delta_{K}^{m}\}_{m\in P\cap \mathfrak{t}_{\ZZ}^{*}}$, 
where $\delta_{K}^{m}$ is defined by 
\begin{equation}
\delta_{K}^{m}(\tau)
:=
\int_{\check{M}^{m}} \bigl\langle \sigma^{m},\tau\bigr\rangle\,
\vol_{K}^{m}
\end{equation}
for every $\tau\in \Gamma_c(M,L^{-1})$. For any $m\in P\cap \mathfrak{t}_{\ZZ}^{*}$ and every test section $\tau\in \Gamma_c(M,L^{-1})$, by equation (\ref{trans-conn}),
\begin{align*}
(\nabla_{X_{\varphi}}\delta_{K}^{m})(\tau)=\int_{\check{M}^{m}} \bigl\langle \sigma^{m},-\nabla_{X_{\varphi}}^{-1}\tau\bigr\rangle\,
\vol_{K}^{m}.
\end{align*}
This implies
\begin{equation}
(\hat{\varphi}\delta_{K}^{m})(\tau)=\int_{\check{M}^{m}} \bigl\langle \hat{\varphi}\sigma^{m},\tau\bigr\rangle\,
\vol_{K}^{m}.
\end{equation}
and  
\begin{align*}
\sum_{j=0}^{\infty} \frac{t^{j}}{j!}(\hat{\varphi}^{j}\delta_{K}^{m})(\tau)= \sum_{j=0}^{\infty} \int_{\check{M}^{m}} \bigl\langle \frac{t^{j}}{j!}\hat{\varphi}^{j}\sigma^{m})(\tau)\bigr\rangle \vol_{K}^{m}=\int_{\check{M}^{m}} \bigl\langle e^{t \hat{\varphi}}\sigma^{m},\tau\bigr\rangle\,
\vol_{K}^{m}.
\end{align*} 
By \cite[Theorem~3.7]{LW3}, $e^{t\hat{\varphi}}\sigma^{m} \in \shH_{t}$, and the family
\begin{equation}\label{eqn:sigma_t^m}
\{\sigma_{t}^{m}:=e^{t\hat{\varphi}}\sigma^{m}\}_{m\in P\cap \mathfrak{t}_{\ZZ}^{*}}
\end{equation}
form a basis of the quantum Hilbert space $\shH_{t}$ associated with
$\shP_{t}$.
We conclude that $e^{t\hat{\varphi}}$ can be applied to $\delta_{K}^{m}$ and 
\begin{equation}\label{dis-ba}
(e^{t\varphi}\delta^{m}_{K})(\tau)=\int_{\check{M}^{m}} \bigl\langle \sigma^{m}_{t},\tau\bigr\rangle\,
\vol_{K}^{m}.
\end{equation}
By \eqref{mixp-fam}, \eqref{dis-ba}, and \cite[Theorem~3.2]{W1}, it follows that
$e^{t\hat{\varphi}}\delta_{K}^{m}$ lies in the quantum space $\shH_{K,t}$ associated with the polarization $\shP_{K,t}$, and that
$\{e^{t\hat{\varphi}}\delta_{K}^{m}\}_{m\in P\cap \mathfrak{t}_{\ZZ}^{*}}$
forms a basis of $\shH_{K,t}$. The proof of the $T^{n}$-equivariance of $e^{t\hat{\varphi}}$ is standard and follows the same argument as in \cite[Theorem~3.7]{LW3}.
\end{proof}

For the purpose of proving our last main result (Theorem \ref{thm-hlim}), we will need to integrate along the fibres of the moment map $\mu: M \to \fot^{*}$. Since $\mu$ is a smooth submersion only over the open dense subset $\mathring{M} \subset M$ (cf. \cite[Section 3]{DH}), we first clarify the meaning of integration along its fibres.

Recall that if $p: E \to B$ is a fibre bundle over an oriented smooth manifold $B$ with compact oriented fibres, then integration along the fibres defines a linear map
$$p_*: \Omega^*(E) \to \Omega^*(B).$$
It satisfies the projection formula: for any differential forms $\alpha \in \Omega^*(E)$ and $\beta \in \Omega^*(B)$,
\begin{equation}
    \label{projection formula}
    p_*(\alpha \wedge p^*\beta) = (p_*\alpha) \wedge \beta.
\end{equation}
The same construction extends to continuous differential forms, producing continuous differential forms on the base.

We now fix an orientation of $T^{n}$ and let $\vol_{T^{n}}$ denote the normalized $T^{n}$-invariant volume form on $T^{n}$, satisfying $\int_{T^{n}} \vol_{T^{n}} = 1$. For a point $x \in P$, let $H_x\subset T^n$ denote the stabilizer of the fibre $\mu^{-1}(x)$. With the induced orientation, $H_x$ is an oriented submanifold of $T^{n}$. The fibre $\mu^{-1}(x) \cong T^{n} / H_x$ is a torus. The quotient map $q_x: T^{n} \to T^{n} / H_x \cong \mu^{-1}(x)$ then defines a fibre integration map, and we set
\begin{equation}\label{fibre-volume}
    \vol_{\mu^{-1}(x)} := (q_x)_*(\vol_{T^{n}}).
\end{equation}
This gives a $T^{n}$-invariant volume form on the fibre $\mu^{-1}(x)$. Notice that, for any $m \in P \cap \mathfrak{t}_{\ZZ}^{*}$ and any weight-$m$ section $\sigma \in \Gamma(M, L)$, since the pointwise norm $|\sigma|$ is $T^n$-invariant, we may regard it as a function on $P$.

\begin{lemma}\label{lem-fib-int}
    Let $f$ be a complex-valued smooth function on $M$. Then the function
    \begin{equation}
        \label{eqn-fib-int}
        \int_{\mu} f:
        P \to \mathbb{C}, \quad x \mapsto \int_{\mu^{-1}(x)} f_x \vol_{\mu^{-1}(x)},
    \end{equation}
    where $f_x := f \vert_{\mu^{-1}(x)}$, is continuous on $P$. We call $\int_{\mu} f$ the \emph{fibre integration of} $f$ \emph{along} $\mu$.
    
    Moreover, if $m \in P \cap \fot^{*}_{\ZZ}$ and $\sigma \in \Gamma(M, L)$ has weight $m$, then there exists a complex constant $C_\sigma$ of modulus $1$ such that for all $\tau \in \Gamma(M, L^{-1})$,
    \begin{equation}
    \label{eqn-fib-int-delta}
        \left( \int_{\mu} \langle \sigma, \tau \rangle \right)(m) = C_\sigma \cdot \lvert \sigma \rvert (m) \cdot \delta^{m}(\tau),
    \end{equation}
    where $\lvert \sigma \rvert$ denotes the pointwise norm of $\sigma$ and $\delta^{m}$ is defined in \eqref{eqn:delta^m(tau)}.
\end{lemma}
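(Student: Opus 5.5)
For continuity, I will pull the fibre integral back to the torus itself. Consider the map
\[
F: P \to \CC,\qquad F(x) = \int_{\mu^{-1}(x)} f_x \vol_{\mu^{-1}(x)}.
\]
By the definition \eqref{fibre-volume} and the projection formula \eqref{projection formula}, we have $F(x)=\int_{T^n} f(g\cdot s(x))\,\vol_{T^n}(g)$ for any point $s(x)\in\mu^{-1}(x)$. This holds because $\vol_{\mu^{-1}(x)}=(q_x)_*\vol_{T^n}$ and $f_x\circ q_x(g)=f(g\cdot s(x))$. Thus $F$ is the $T^n$-average of $f$, evaluated on the fibre.

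To prove continuity, I will show that the averaged function $\bar f(p)=\int_{T^n} f(g\cdot p)\vol_{T^n}(g)$ is continuous on $M$. This follows from the continuity of the action map $T^n\times M\to M$ and the compactness of $T^n$. Moreover, $\bar f$ is $T^n$-invariant, so $F=\bar f\circ s$, i.e.\ $\bar f=F\circ\mu$. Since $\mu:M\to P$ is a continuous surjection from a compact space onto a Hausdorff space, it is a quotient map. Hence $F$ is continuous.

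For \eqref{eqn-fib-int-delta}, I will work on $V_v=\mu^{-1}(\mathring P_v)$, where $v$ is chosen as in the definition of $\delta^m$ with $m\in\mathring P_v$. Trivialize $L^{-1}$ there by $1_v$, and write $\tau=\tau_v 1_v$. Then $\langle\sigma,\tau\rangle=\tau_v\langle\sigma,1_v\rangle$ on $V_v$.

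The key step is to identify the function $\langle\sigma,1_v\rangle$ restricted to the fibre $\mu^{-1}(m)$. Since $\sigma$ has weight $m$ and $1_v$ is an equivariant frame, this restriction transforms under $T^n$ by a character. In the angle coordinates of $\mu^{-1}(m)\cong T^n/H_m$, that character should be exactly $e^{i\langle\ell(m),\theta\rangle}$. The frame conventions of \cite{BFMN} are arranged so that this character is well defined on the quotient by $H_m$.

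The modulus of $\langle\sigma,1_v\rangle$ is then constant on the fibre. If $1_v$ is a unitary frame along $\mu^{-1}(m)$, this modulus equals $|\sigma|(m)$. Otherwise one absorbs the $T^n$-invariant quantity $|1_v|(m)$, as in \cite{BFMN}, where $|1_v|=1$ on Bohr--Sommerfeld fibres. Hence
\[
\langle\sigma,1_v\rangle\big|_{\mu^{-1}(m)}=C_\sigma\,|\sigma|(m)\,e^{i\langle\ell(m),\theta\rangle},
\]
where $C_\sigma$ is the phase at the base point $\theta=0$. Integrating against $\tau_v\vol_{\mu^{-1}(m)}$ then gives exactly $C_\sigma|\sigma|(m)\delta^m(\tau)$.

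The main obstacle is this identification of the character together with the normalization of $1_v$. In particular, one must check that the weight-$m$ condition produces precisely $e^{i\langle\ell(m),\theta\rangle}$, including on singular fibres where $H_m\neq 1$. I would verify it in the local slice coordinates of Theorem \ref{thm2-2} around $\mu^{-1}(m)$. In those coordinates, $\sigma$ is a monomial times the frame.

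If $|\sigma|(m)=0$, both sides of \eqref{eqn-fib-int-delta} vanish, and $C_\sigma$ may be taken to be $1$.
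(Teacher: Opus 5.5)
Your proof is correct. For the identity \eqref{eqn-fib-int-delta} it follows the paper's argument. The paper likewise asserts without proof that on $\mu^{-1}(m)$ the weight-$m$ section has the form $C_\sigma\lvert\sigma\rvert e^{i\langle\ell(m),\theta\rangle}$ in the frame $1_v$, and then integrates; so the step you flag as the main obstacle is taken for granted there too. Your remark that $\lvert C_\sigma\rvert=1$ requires $1_v$ to be unitary along $\mu^{-1}(m)$ makes explicit a convention the paper uses without comment. You should simply fix that convention rather than ``absorbing'' $\lvert 1_v\rvert(m)$, since absorbing it would break $\lvert C_\sigma\rvert=1$.

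For continuity you take a genuinely different route. The paper pulls back along an imported continuous map $\operatorname{Bl}_M\colon P\times T^n\to M$ with $\mu\circ\operatorname{Bl}_M=\operatorname{pr}_P$. Then $\int_\mu f=(\operatorname{pr}_P)_*\bigl((\operatorname{Bl}_M^*f)\vol_{T^n}\bigr)$ is the fibre integral of a continuous function over the product $P\times T^n\to P$. You instead average $f$ over $T^n$, obtaining a continuous (even smooth) invariant function $\bar f$ on $M$ with $\bar f=(\int_\mu f)\circ\mu$. You then descend through $\mu$, which is a quotient map because $M$ is compact and $P$ is Hausdorff.

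Your argument is more elementary and self-contained. It needs only Haar invariance (which also gives independence of the base point $s(x)$), compactness, and the fact that each fibre of $\mu$ is a single $T^n$-orbit. You use that last fact when passing from $F=\bar f\circ s$ to $\bar f=F\circ\mu$, and you should state it explicitly. The paper's argument depends on the external construction of $\operatorname{Bl}_M$. In return it realizes $\int_\mu f$ literally as a fibre integral over a trivial bundle, which is the setting in which the projection formula \eqref{projection formula} is stated.
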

\begin{proof}
    Let $\operatorname{Bl}_M: P \times T^{n} \to M$ denote the continuous map defined in \cite[Subsection 2.3]{LY2} such that the following diagram commutes:
    \begin{center}
        \begin{tikzcd}
            P \times T^{n} \ar[r, "\operatorname{Bl}_M"] \ar[rd, "\operatorname{pr}_P"'] & M \ar[d, "\mu"]\\
            & P
        \end{tikzcd}
    \end{center}
    where $\operatorname{pr}_P: P \times T^{n} \to P$ denotes the canonical projection. 

    We pull $\vol_{T^{n}}$ back along the canonical projection $P \times T^{n} \to T^{n}$, denoted by the same symbol. Since $\operatorname{Bl}_M^* f$ is a continuous function on $P \times T^{n}$, the fibre integration
    $$(\operatorname{pr}_P)_*((\operatorname{Bl}_M^* f) \cdot \vol_{T^{n}})$$
    is a continuous function on $P$. For all $x \in P$, by equation \eqref{projection formula} we compute
    \begin{align*}
        \left( \int_\mu f \right)(x) = & \int_{\mu^{-1}(x)} (q_x)_* ( (q_x^* f_x) \vol_{T^{n}} )\\
        = & \int_{T^{n}} (q_x^* f_x) \vol_{T^{n}}\\
        = & (\operatorname{pr}_P)_*((\operatorname{Bl}_M^* f) \cdot \vol_{T^{n}}) (x).
    \end{align*}
    Thus, $\int_{\mu} f$ is a continuous function on $P$.
    
    Now, suppose that $\sigma \in \Gamma(M, L)$ has weight $m \in P \cap \fot^{*}_{\ZZ}$. Then $\sigma$ is locally expressed as
    $$\sigma = C_{\sigma} \cdot \lvert \sigma \rvert \cdot e^{i \langle \ell(m), \theta \rangle} \cdot 1_v$$
    for some $C_\sigma \in \mathbb{C}$ satisfying $| C_\sigma | = 1$. By \eqref{eqn:delta^m(tau)}, for all $\tau \in \Gamma(M, L^{-1})$, \eqref{eqn-fib-int-delta} holds.
\end{proof}

For the K\"ahler polarization $\shP_{J}$ on $M$, the associated quantum Hilbert space $\shH_{J}$ also admits a canonical basis $\{\sigma^{m}\}_{m\in P\cap \mathfrak{t}_{\ZZ}^{*}}$, where for each $m \in P\cap \mathfrak{t}_{\ZZ}^{*}$, $\sigma^{m}$ is a $J$-holomorphic section of weight $m$. According to Lemma \ref{lem-fib-int}, we rescale each $\sigma^{m}$ so that it satisfies $C_{\sigma^{m}} = 1$, that is
\begin{equation}
    \label{rescale}
    \left( \int_{\mu} \langle \sigma^{m}, \tau \rangle \right)(m) = \lvert \sigma^{m} \rvert (m) \cdot \delta^{m}(\tau) \quad \text{for all } \tau \in \Gamma(M, L^{-1}).
\end{equation}

Our next goal is to clarify the relationship between the quantum spaces
$\shH_{K,t}$ associated with the mixed polarizations and $\shH_{\RR}$ associated
with the real polarization, via the one-parameter family
$\{\shP_{K,t}\}_{t\ge 0}$.

We begin by recalling the mixed-polarization setup. For any $m\in P$, set
$$
M_{K}^{m}:=\mu_{K}^{-1}\!\bigl(i_{K}^{*}(m)\bigr)\subset M,
$$
where $i_{K}^{*}\colon \mathfrak{t}^{*}\to \mathfrak{t}_{K}^{*}$ is the dual map
induced by the inclusion $i_{K}\colon \mathfrak{t}_{K}\hookrightarrow
\mathfrak{t}$, and $\mu_{K}\colon M\to \mathfrak{t}_{K}^{*}$ is the corresponding
moment map. The submanifold $M_{K}^{m}$ is $K$-invariant. Since $K$ is fixed throughout this paper, we henceforth write $M^{m}$ in place of $M_{K}^{m}$. Let
$$
\pi\colon M^{m}\longrightarrow M^{m}/K
$$
denote the quotient map.

If $i_{K}^{*}(m)\in \mathfrak{t}_{K}^{*}$ is a regular value of $\mu_{K}$ and the
$K$-action on $M^{m}$ is free, then $M^{m}$ is a principal $K$-bundle over the quotient space $M^{m}/K$. In this case there is a unique symplectic form
$\omega_{m,K}$ on $M^{m}/K$ characterized by
$$
\pi^{*}\omega_{m,K}=\omega|_{M^{m}}.
$$
Following \cite{LW2,LW3}, we equip $M^{m}$ with a volume form
$$
\vol_{K}^{m}
:=
\pi^{*}\!\left(\vol_{\omega_{m, K}}\right)\wedge \iota_{\mathrm{v}}\alpha^{k},
$$
where $\vol_{\omega_{m, K}} := \tfrac{1}{(n-k)!}\,\omega_{m,K}^{\,n-k}$ is the Liouville volume form on $(M^{m}/K, \omega_{m, K})$, $\mathrm{v}$ is a fixed $K$-invariant volume
form on $\fot_{K}$ and $\alpha\in \Omega^{1}(M^{m},\mathfrak{t}_{K})$ is any principal $K$-connection. Also, $\alpha^k \in \Omega^k(M^{m}, \bigwedge^k \mathfrak{t}_{K})$ denotes the $k$-fold exterior product of $\alpha$, and $\iota_{\mathrm{v}}$ denotes contraction with $\mathrm{v} \in \bigwedge^k \mathfrak{t}_{K}^*$.

\begin{remark}
\label{Remark: abbreviated notation}
This expression is independent of the choice of principal connection $\alpha$. Accordingly, we shall often use the abbreviated notation
$$
\vol_{K}^{m}
=
\pi^{*}\!\left(\vol_{\omega_{m, K}}\right)\wedge \mathrm{v}.
$$
The same convention will be adopted for analogous volume forms on principal bundles throughout the paper.
\end{remark}

For a general $m\in P$, \cite[Theorem~5.9]{SL} asserts that there exists a closed subgroup $H_K$ of $K$ and an open dense subset $\check{M}^{m} \subset M^{m}$ which is a principal $K/H_K$-bundle. Moreover, $\check{M}^{m}/K$ constitutes the largest open dense stratum of the symplectic reduction of $M$ at the level $\iota_K^*(m)$. 
Similar to the regular-value case, we denote by $\omega_{m,K}$ the symplectic form on this reduced space.

We further construct another open dense subset $\mathring{M}^{m}$ of $M^{m}$ that is better suited for the proof of our main result. To begin, consider the image
$$P^{m} := \mu(M^{m}) = \{ x \in P: \iota_{K}^*(x) = \iota_K^*(m) \} \subset \mathfrak{t}^*.$$
This is a polytope, being the intersection of the polytope $P$ with an affine subspace of $\fot^*$. Since $\mu_{K} = \iota_{K}^* \circ \mu$, the level set $M^{m}$ can be expressed as $M^{m} = \mu^{-1} ( P^{m} )$, which shows that $M^{m}$ is $T^{n}$-invariant. Let $\mathring{P}^{m}$ be the relative interior of $P^{m}$, and define
$$\mathring{M}^m = \mu^{-1}(\mathring{P}^{m}).$$
The fact that $\mathring{M}^{m}$ is an open dense subset of $\check{M}^{m}$ is a direct consequence of Lemma \ref{prin-bdl} in Appendix \ref{appendix}; it states that $\mathring{M}$ consists exactly of those points in $M^{m}$ whose $T^{n}$-stabilizer is minimal and $\mu \vert_{\mathring{M}^{m}}: \mathring{M}^{m} \to \mathring{P}^{m}$ is a principal torus bundle.

\begin{remark}
    The linear map $\iota_{Q}^*$ induces a bijection $P^{m} \to \iota_{Q}^*(P^{m})$, so $\iota_{Q}^*(P^{m})$ is a polytope in $\fot_{Q}^*$.
    Note that $\iota_{Q}^*(P^{m})$ may have dimension strictly less than that of $\fot_Q^*$.
\end{remark}

Define the quotient tori $\widetilde{K} := K / H_K$ and $\widetilde{T} := T^{n} / H$. Also define
\[
    \vol_{\widetilde{T}} := q_*(\vol_T),
\]
where $q: T \to \widetilde{T}$ is the quotient map. This gives a $T^{n}$-invariant volume form on $\widetilde{T}$. Let $\vol_{\omega_{m, K}}$ denote the Liouville volume form on $(\check{M}^m/K, \omega_{m, K} \vert_{\check{M}^m/K})$ and let $\vol_{P^{m}}$ be an Euclidean volume form on $P^{m}$. The following proposition provides a decomposition of $\vol_{K}^{m} |_{\mathring{M}^{m}}$, where $\vol_{K}^{m}$ is a volume form on $\check{M}^{m}$ constructed analogously to the regular-value case.

\begin{proposition}\label{decomp-vol}
    Equip the open subset $\mathring{M}^{m} / K \subset \check{M}^{m} / K$ with the restriction of the symplectic form $\omega_{m, K}$. 
    Then the principal $\widetilde{T} / \widetilde{K}$-bundle $\widetilde{\mu}: \mathring{M}^{m} / K \to \mathring{P}^{m}$ defined by the commutative diagram
        \begin{center}
            \begin{tikzcd}
                \mathring{M}^{m} \ar[d, "\pi \vert_{\mathring{M}^{m}}"'] \ar[rd, "\mu \vert_{\mathring{M}^{m}}"]\\
                \mathring{M}^{m} / K \ar[r, "\widetilde{\mu}"'] & \mathring{P}^{m}
            \end{tikzcd}
        \end{center}
        is a Lagrangian torus bundle. In particular, there exists a $\widetilde{K}$-invariant volume form $\mathrm{v}$ on $\fot_{\widetilde{K}}$, which defines a volume form on $\check{M}^{m}$ (see Remark \ref{Remark: abbreviated notation}):
        \[
            \vol_K^{m} := \pi^{*}\!\left(\vol_{\omega_{m, K}}\right)\wedge \mathrm{v},
        \]
        such that on $\mathring{M}^{m}$ this volume form decomposes as
        $$\vol_{K}^{m} |_{\mathring{M}^{m}} = \vol_{\widetilde{T}} \wedge \left( \mu \vert_{\mathring{M}^m} \right)^* \left( \vol_{P^{m}} \right).$$
\end{proposition}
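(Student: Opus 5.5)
The proof will go through symplectic reduction in stages, working entirely in action--angle coordinates on $\mathring{M}^{m}$. By Lemma \ref{prin-bdl}, $\mu|_{\mathring{M}^{m}}:\mathring{M}^{m}\to\mathring{P}^{m}$ is a principal $\widetilde{T}$-bundle, and the $T^{n}$-stabilizer $H$ is constant along $\mathring{M}^{m}$, with $H_K=H\cap K$. Hence $\widetilde K=K/H_K$ embeds in $\widetilde T$. Dividing by $K$ then makes $\widetilde\mu:\mathring{M}^{m}/K\to\mathring{P}^{m}$ a principal $\widetilde T/\widetilde K$-bundle, and the diagram commutes because $\mu$ is $K$-invariant.

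\textbf{Lagrangian fibres.} The fibre of $\widetilde\mu$ over $x$ is $\mu^{-1}(x)/K$. Since $\mu^{-1}(x)$ is an isotropic $T^{n}$-orbit, $\omega|_{\mu^{-1}(x)}=0$, and therefore $\pi^*\omega_{m,K}$ vanishes on it. Thus the fibres are isotropic. Counting dimensions: $\dim \mathring{P}^{m}=\dim P^{m}=:r$, the fibre has dimension $\dim\widetilde T-\dim\widetilde K$, and $\dim(\mathring{M}^{m}/K)=r+\dim\widetilde T-\dim\widetilde K$. I need to check that $\dim\widetilde T-\dim\widetilde K=r$. This holds because the tangent space of $P^{m}$ at a relative interior point is the annihilator of $\mathfrak t_K$ inside the tangent space of the open face of $P$ containing $\mathring{P}^{m}$, and that tangent space is $\mathfrak h^{\perp}$. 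The dimension count therefore reduces to checking that $\mathfrak t_K$ and $\mathfrak h$ meet only in $\mathfrak h_K$, which holds by definition of $H_K$. So the fibres are Lagrangian.

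\textbf{Decomposition of the volume form.} I would use local action--angle coordinates. On $\mathring{M}^{m}$, choose coordinates $(x,\theta)$, where $x$ ranges over affine coordinates on the open face (with $r$ of them along $P^{m}$ and the remaining ones being the $\mathfrak t_K$-moment coordinates), and $\theta$ are angles on $\widetilde T$ split compatibly as $\widetilde K\times(\widetilde T/\widetilde K)$ using the fixed splitting $Q$. There the restriction of $\omega$ is $\sum dx_j\wedge d\theta_j$ summed over the $P^{m}$ directions, because the $K$-moment coordinates are constant on $M^{m}$. Hence $\omega_{m,K}=\sum_{j=1}^{r}dx_j\wedge d\theta_j$ in the induced coordinates, and $\vol_{\omega_{m,K}}=\pm\,dx_1\wedge\cdots\wedge dx_r\wedge d\theta_1\wedge\cdots\wedge d\theta_r$. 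Wedging with $\iota_{\mathrm v}\alpha^{k}$ contributes the $\widetilde K$-angle form $\mathrm v(\xi_1,\dots)\,d\theta_{K}$. The product is then a constant multiple of $(\mu|_{\mathring{M}^m})^*\vol_{P^m}\wedge d\theta_{\widetilde T}$.

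\textbf{Normalization and independence of choices.} Both sides are $T^{n}$-invariant and of the form (pullback of a top form on $\mathring{P}^{m}$)$\,\wedge\,$(invariant angular form). Each is therefore determined by a positive constant times Lebesgue measure, so the identity holds once the constant is matched. I would choose $\mathrm v$ to absorb this constant, together with the lattice covolume relating $\vol_{P^{m}}$ to $dx$ and $\vol_{\widetilde T}=q_*\vol_T$ to $d\theta$. The constant is independent of the point because the coordinates are global affine on the face. Extending $\vol_K^{m}$ from $\mathring{M}^{m}$ to $\check{M}^{m}$ is automatic, since the defining formula makes sense on the whole principal bundle $\check{M}^{m}$.

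\textbf{Main obstacle.} The hard step is the dimension and lattice bookkeeping. Showing that $\widetilde K\subset\widetilde T$ is a subtorus acting with the right stabilizer, and that the constant is uniform, requires care with $H_K=H\cap K$ and possibly nonconnected stabilizers. I would try first to reduce to the case where $H$ is connected, which holds for Delzant polytopes since the stabilizers are subtori.
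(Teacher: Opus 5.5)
Your proposal is correct and follows essentially the same route as the paper: isotropy of the torus orbits plus a dimension count gives the Lagrangian property, and action--angle coordinates split the Liouville form so that $\mathrm{v}$ can be chosen to match the constants. Your identity $TP^{m}=(\mathfrak{h}+\mathfrak{t}_K)^{\perp}$ makes explicit the rank comparison that the paper reads off from the residual Hamiltonian $Q$-action obtained by reduction in stages. One small correction: you should split $\widetilde T\cong\widetilde K\times(\widetilde T/\widetilde K)$ using an arbitrary lattice complement of $\widetilde K$ in $\widetilde T$, not the splitting $Q$. The image of $Q$ in $\widetilde T=T^{n}/H$ need not be complementary to $\widetilde K$, but nothing else in your argument depends on this choice.
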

\begin{proof}
    By the reduction-in-stages construction of Section 4 in \cite{SL}, 
    together with the fact that $\mathring{M}^{m}$ is a $T^{n}$-invariant open subset of $\check{M}^{m}$, the symplectic quotient $\mathring{M}^{m} / K$ carries a residual Hamiltonian $Q$-action. With respect to this action, the map
    $$\widetilde{\mu}: \mathring{M}^{m} / K \to \mathring{P}^{m}$$
    is a moment map. The $Q$-orbits in $\mathring{M}^{m} / K$ coincide with the fibres of $\widetilde{\mu}$. Comparing the rank of $\widetilde{\mu}$ with the dimension of the $Q$-orbits shows that these fibres are isotropic of maximal possible dimension; hence they are Lagrangian tori. 
    
    Since the base $\mathring{P}^{m}$ is contractible, the fibration $\widetilde{\mu}$ admits global action--angle coordinates. It follows that the Liouville volume form on $\mathring{M}^{m} / K$ decomposes as
    $$\vol_{\omega_{m, K}} |_{\mathring{M}^{m} / K} = \widetilde{\mu}^*\vol_{P^{m}} \wedge \mathrm{v}'$$
    for some $\widetilde{T}/\widetilde{K}$-invariant volume form $\mathrm{v}'$ on $\widetilde{T}/\widetilde{K}$. Choose a $\widetilde{K}$-invariant volume form $\mathrm{v}$ on $\mathfrak{t}_{\widetilde{K}}$ so that $\mathrm{v}' \wedge \mathrm{v} = (-1)^{m\widetilde{n}} \vol_{\widetilde{T}}$, where $\widetilde{n}$ is the dimension of $\widetilde{T}$. Pulling the above identity back via $\pi$ gives the desired decomposition.
\end{proof}

The next lemma gives an explicit relation between the basis elements
$\sigma_{t}^{m}$ defined in \eqref{eqn:sigma_t^m} and $\sigma^{m}$. 
Fix $m \in P$. To analyze the flow $e^{t\hat{\varphi}}$ acting on a $J$-holomorphic weight-$m$ section of $L$, we introduce the real-valued function on $\fot_{Q}^*$:
\[
    x \mapsto \phi(x) - \nabla \phi \cdot (x - \iota_Q^*(m)).
\]
Here, $\nabla \phi \cdot (x - \iota_Q^*(m)) = \sum_{j=1}^q \frac{\partial \phi}{\partial x_j} (x_j - \iota_Q^*(m)_j)$, where $x = (x_1, ..., x_q)$ are the coordinates on $\mathfrak{t}_{Q}^*$ induced by an arbitrary basis $\{\xi_{1},\dots,\xi_{q}\}$ of $\mathfrak{t}_{Q}$ and $\iota_Q^*(m) = (\iota_Q^*(m)_1, ..., \iota_Q^*(m)_q)$ in these coordinates. 
This real-valued function attains a unique maximum at $x=\iota_Q^*(m)$ and the critical point $x=\iota_Q^*(m)$ is non-degenerate.
\begin{lemma}\label{lem-img} Let $m \in P \cap \fot^{*}_{\ZZ}$. 
For all $t >0$, we have 
\begin{equation}\label{eq: action of weight-m section}
    e^{t\hat{\varphi}} \sigma^{m}= e^{t \mu_Q^* \left(\phi - \nabla \phi \cdot (x - \iota_Q^*(m)) \right)} \sigma^{m}.
\end{equation}
\end{lemma}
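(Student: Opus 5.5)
The plan is to compute $\hat{\varphi}\sigma^{m}$ explicitly, observe that it equals $\sigma^{m}$ times a $T^{n}$-invariant function annihilated by $\nabla_{X_{\varphi}}$, and then sum the Lie series exactly as in Lemma~\ref{local coordinate lem}.

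\emph{Step 1 (weight condition).} Since $\sigma^{m}$ is a weight-$m$ section of the $T^{n}$-equivariant prequantum bundle, the Kostant--Souriau prequantization of the moment map components gives, for each $\xi$ in the Lie algebra of $Q$,
\[
-i\nabla_{\xi^{\#}}\sigma^{m}+\langle\mu,\xi\rangle\sigma^{m}=\langle m,\xi\rangle\sigma^{m}.
\]
This is the infinitesimal form of the statement that $T^{n}$ acts on $\sigma^{m}$ by the character $m$. Sign conventions must be matched to $F_{\nabla}=-i\omega$ and to the linearization of the action on $L$, and checking them is where care is needed. In the coordinates $x=(x_{1},\dots,x_{q})$ on $\fot_{Q}^{*}$ this reads $-i\nabla_{\xi_{j}^{\#}}\sigma^{m}=(\iota_{Q}^{*}(m)_{j}-\mu_{Q,j})\sigma^{m}$.

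\emph{Step 2 (one application of $\hat{\varphi}$).} As in the proof of Lemma~\ref{local coordinate lem}, $X_{\varphi}=\sum_{j}\mu_{Q}^{*}(\partial\phi/\partial x_{j})\,\xi_{j}^{\#}$. Hence
\[
\hat{\varphi}\sigma^{m}=\Bigl(\mu_{Q}^{*}\phi+\sum_{j}\mu_{Q}^{*}\bigl(\tfrac{\partial\phi}{\partial x_{j}}\bigr)(\iota_{Q}^{*}(m)_{j}-\mu_{Q,j})\Bigr)\sigma^{m}=F\,\sigma^{m},
\qquad F:=\mu_{Q}^{*}\bigl(\phi-\nabla\phi\cdot(x-\iota_{Q}^{*}(m))\bigr).
\]

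\emph{Step 3 (iteration).} The function $F$ is a pullback of a function on $\fot_{Q}^{*}$ via $\mu_{Q}$, so it is $Q$-invariant, and therefore $X_{\varphi}F=0$. It follows that
\[
\hat{\varphi}(F^{r}\sigma^{m})=-iX_{\varphi}(F^{r})\sigma^{m}+F^{r}\hat{\varphi}\sigma^{m}=F^{r+1}\sigma^{m}.
\]
By induction, $\hat{\varphi}^{r}\sigma^{m}=F^{r}\sigma^{m}$ for all $r$.

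\emph{Step 4 (summing the series).} Since $M$ is compact, $F$ is bounded. The series $\sum_{r}\frac{t^{r}}{r!}F^{r}\sigma^{m}$ therefore converges absolutely and uniformly on compact subsets of $M\times\RR$, and its sum is $e^{tF}\sigma^{m}$, which is \eqref{eq: action of weight-m section}.

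The main obstacle is Step 1: getting the sign of the weight relation right, that is, whether the relation involves $\langle m,\xi\rangle-\langle\mu,\xi\rangle$ or its negative. The correct sign is pinned down by the requirement that $e^{t\hat{\varphi}}\sigma^{m}$ concentrates at $\mu^{-1}(m)$, consistent with $F$ having its maximum at $\iota_{Q}^{*}(m)$, as noted before the lemma. I would verify it in the local trivialization $w^{m}\sigma_{D}$ on $\mathring{M}$, where $|w^{m}\sigma_{D}|^{2}=e^{\langle m,y\rangle-h}$ and $x=\partial h/\partial y$. The remaining steps are routine.
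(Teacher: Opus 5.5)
Your proposal is correct and follows the paper's proof essentially step for step. The paper uses the same weight-$m$ relation $-i\nabla_{\xi_j^{\#}}\sigma^m=-\mu_Q^*(x_j-\iota_Q^*(m)_j)\,\sigma^m$ with exactly your sign, writes $X_\varphi=\sum_j\mu_Q^*(\partial\phi/\partial x_j)\,\xi_j^{\#}$ to obtain $\hat{\varphi}\sigma^m=F\sigma^m$, and iterates using that $F$ is $T^n$-invariant and hence annihilated by $X_\varphi$. Your Step 4, where boundedness of $F$ on the compact $M$ gives convergence of the series, is a detail the paper leaves implicit.
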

 
\begin{proof} 
Pick a basis $\{\xi_{1},\dots,\xi_{q}\}$ of $\mathfrak{t}_{Q}$ and write $\iota_Q^*(m) = (\iota_Q^*(m)_1, ..., \iota_Q^*(m)_q)$ in the induced linear coordinates $(x_1, ..., x_q)$ on $\mathfrak{t}_{Q}^*$. Since $\sigma^{m}$ is of weight $m$ with respect to the $T^{n}$-equivariant structure on $L$ determined by the moment map $\mu$, for each $j = 1, ..., q$, the infinitesimal action of $\xi_{j}$ on $\sigma^{m}$ is given by
\[
    -i\nabla_{\xi_{j}^{\#}} \sigma^{m} + (x_{j} \circ \mu_Q) \sigma^{m} = \iota_Q^*(m)_j \sigma^{m},
\]
where $\xi_{j}^{\#}$ is the fundamental vector field generated by $\xi_{j}$. Equivalently, we have
\[
    -i\nabla_{\xi_{j}^{\#}} \sigma^{m} = -\mu_Q^*(x_{j} - \iota_Q^*(m)_j) \sigma^{m}.
\]
Since the Hamiltonian vector field associated with the function $\varphi = \phi \circ \mu_Q$ can be expressed as $X_{\varphi}
=
\sum_{j=1}^{q} \mu_Q^* \left( \frac{\partial\phi}{\partial x_{j}} \right) \xi_{j}^{\#}$, 
we obtain
\begin{align*}
    \hat{\varphi} (\sigma^{m})
    = & \sum_{j=1}^{q} \mu_Q^* \left( \frac{\partial\phi}{\partial x_{j}} \right) \cdot (-i \nabla_{\xi_{j}^{\#}} \sigma^{m}) + \varphi \sigma^{m}\\
    = & \mu_Q^* \left(\phi - \nabla \phi \cdot (x - \iota_Q^*(m)) \right) \sigma^{m}.
\end{align*}
The function $\mu_Q^*\left( \phi - \nabla \phi \cdot (x - \iota_Q^*(m)) \right)$
is $T^{n}$-invariant, and is hence annihilated by $X_{\varphi}$. It implies that multiplication by this function commutes with the operator $\hat{\varphi}$. Therefore, by induction, for any non-negative integer $k$, we have:
$$\hat{\varphi}^{k}(\sigma^{m})
=
\Bigl( \mu_Q^*\left( \phi - \nabla \phi \cdot (x - \iota_Q^*(m)) \right) \Bigr)^{k}\sigma^{m}.$$
Consequently, we obtain the desired formula \eqref{eq: action of weight-m section}.
\end{proof}

By Theorem \ref{imaginary-flow},
 $\{\delta_{K,t}^{m}:=e^{t\hat{\varphi}}\delta_{K}^{m}\}_{m\in P\cap \mathfrak{t}_{\ZZ}^{*}}$ form a basis of $\shH_{K,t}$,
where $\delta_{K,t}^{m}$ is defined by
\begin{equation}\label{dis-basis}
\delta_{K,t}^{m}(\tau)
:=
\int_{\check{M}^{m}} \bigl\langle \sigma_{t}^{m},\tau\bigr\rangle\,
\vol_{K}^{m}
\end{equation}
for every test section $\tau\in \Gamma_c(M,L^{-1})$, where $\sigma_{t}^{m}$
is given by \eqref{eqn:sigma_t^m} and $\vol_K^{m}$ is the volume form given in Proposition \ref{decomp-vol}. Notice that $\mathring{M}^{m}$ is an open dense subset of $\check{M}^{m}$, hence
\begin{equation}\label{dis-basis2}
\delta_{K,t}^{m}(\tau)
= \int_{\mathring{M}^{m}} \bigl\langle \sigma_{t}^{m},\tau\bigr\rangle\,
\vol_{K}^{m}.
\end{equation}
Thus, both $\shH_{\RR}$ and $\shH_{K,t}$ admit canonical bases indexed by the
integral points of the moment polytope, namely
$\{\delta^{m}\}_{m\in P\cap \mathfrak{t}_{\ZZ}^{*}}$ and
$\{\delta_{K,t}^{m}:=e^{t\hat{\varphi}}\delta_{K}^{m}\}_{m\in P\cap \mathfrak{t}_{\ZZ}^{*}}$, respectively. In particular,
$$
\dim(\shH_{K,t})=\dim(\shH_{\RR}) \quad \text{for all $t\ge 0$}.
$$
We are now in a position to show that ``$\lim_{t\rightarrow \infty}\shH_{K,t}=\shH_{\RR}$'' in the
following sense:
$$
\Span_{\CC}\Bigl\{\lim_{t\to\infty} C_t^m\,\delta_{K,t}^m\Bigr\}_{m\in P\cap \mathfrak{t}_{\ZZ}^{*}}
\;=\;
\Span_{\CC}\{\delta^m\}_{m\in P\cap \mathfrak{t}_{\ZZ}^{*}},
$$
where for each $m\in P\cap \mathfrak{t}_{\ZZ}^{*}$ and $t>0$, the normalization constant $C_t^m$ is
\begin{equation}\label{eq-famconst}
C_t^m
=
\Bigl\| e^{-t \mu_Q^*\left(\phi - \nabla \phi \cdot (x-i_Q^*(m)) \right)} \Bigr\|_{L^1}^{-1}.
\end{equation}
Here, $\| \, \cdot \, \|_{L^1}$ denotes the $L^1$-norm on functions on $\check{M}^{m}$ with respect to the volume form $\vol_{K}^{m}$.

\begin{theorem}\label{thm-hlim}
Under assumption $(*)$, for any $m \in P \cap \fot^{*}_{\ZZ}$, the family of
distributional sections $\{C_{t}^{m}\delta_{K,t}^{m}\}_{t>0}\subset
\Gamma_{c}(M,L^{-1})'$ converges weakly to $C^{m}\delta^{m}$, that is: for
every test section $\tau\in \Gamma_{c}(M,L^{-1})$, one has
\begin{equation}
\lim_{t\to\infty} C_{t}^{m}\delta_{K,t}^{m}(\tau)= C^{m} \delta^{m}(\tau),
\end{equation}
where $C_{t}^{m}$ is as in \eqref{eq-famconst} and $C^{m} := |\sigma^{m}|(m)$.
\end{theorem}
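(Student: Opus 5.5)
We plan a Laplace-type concentration argument along the fibres of $\mu$, using the structure of $\mathring{M}^m$ from Proposition \ref{decomp-vol}.

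\textbf{Step 1: reduce to an integral over $\mathring{P}^m$.} By \eqref{dis-basis2} and Lemma \ref{lem-img},
\[
\delta_{K,t}^{m}(\tau)=\int_{\mathring{M}^{m}} e^{t\mu_Q^*F_m}\,\langle\sigma^{m},\tau\rangle\,\vol_{K}^{m},
\qquad F_m(x):=\phi(x)-\nabla\phi(x)\cdot(x-\iota_Q^*(m)).
\]
Set $f_m:=F_m-F_m(\iota_Q^*(m))\le 0$. Its unique zero is at $\iota_Q^*(m)$, and this maximum is nondegenerate, since $\nabla F_m(x)=-\mathrm{Hess}\,\phi(x)\,(x-\iota_Q^*(m))$.

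By Proposition \ref{decomp-vol}, $\vol_K^m=\vol_{\widetilde{T}}\wedge\mu^*\vol_{P^m}$ on $\mathring{M}^m$, and $\mu$ restricted there is a principal $\widetilde{T}$-bundle. We therefore integrate first along the fibres $\mu^{-1}(x)$ for $x\in\mathring P^m$. The factor $e^{t\mu_Q^*F_m}$ is constant on each fibre.

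\emph{Normalization check.} The fibre integral of $\vol_{\widetilde T}$ and the volume form \eqref{fibre-volume} should agree up to a fixed constant; both equal 1 after pushing forward $\vol_{T^n}$. Granting this, we obtain
\[
C_t^m\delta_{K,t}^m(\tau)=\frac{\int_{\mathring P^m} e^{t f_m(\iota_Q^*x)}\,g_\tau(x)\,\vol_{P^m}}{\int_{\mathring P^m} e^{t f_m(\iota_Q^*x)}\,\vol_{P^m}},
\qquad g_\tau:=\int_\mu\langle\sigma^m,\tau\rangle .
\]
The common factor $e^{tF_m(\iota_Q^*(m))}$ cancels between numerator and $C_t^m$, which by \eqref{eq-famconst} is the reciprocal of the same integral with $g_\tau$ replaced by $1$.

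\textbf{Step 2: apply an approximate-identity argument.} By Lemma \ref{lem-fib-int}, $g_\tau$ is continuous on $P$, hence bounded on the compact set $P^m$. By \eqref{rescale}, $g_\tau(m)=|\sigma^m|(m)\,\delta^m(\tau)=C^m\delta^m(\tau)$.

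The measures $\nu_t:=e^{tf_m\circ\iota_Q^*}\vol_{P^m}/\!\int e^{tf_m\circ\iota_Q^*}\vol_{P^m}$ are probability measures on $P^m$. Since $\iota_Q^*|_{P^m}$ is an affine injection, $f_m\circ\iota_Q^*$ attains its strict maximum $0$ on $P^m$ only at $x=m$, and $m\in P^m$. Given $\epsilon>0$, choose a neighbourhood $U$ of $m$ in $P^m$ on which $|g_\tau-g_\tau(m)|<\epsilon$. The difference $|\int g_\tau\,d\nu_t-g_\tau(m)|$ is then at most $\epsilon+2\sup|g_\tau|\,\nu_t(P^m\setminus U)$.

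\textbf{Step 3: show the measures concentrate at $m$ (main obstacle).} We must show $\nu_t(P^m\setminus U)\to0$. The difficulty is that $m$ may lie on the boundary of $P^m$, even at a vertex, so the standard interior Laplace asymptotics do not apply directly.

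The plan is an elementary two-sided estimate. Off $U$, compactness gives $f_m\circ\iota_Q^*\le-\eta<0$, so the numerator is $O(e^{-t\eta})$. For the denominator, nondegeneracy gives $f_m\circ\iota_Q^*(x)\ge -c|x-m|^2$ near $m$. A convex polytope contains a cone at $m$ of positive solid angle, so the intersection of $P^m$ with the ball of radius $t^{-1/2}$ has volume at least $c' t^{-d/2}$, where $d=\dim P^m$. Hence the denominator is at least $c''t^{-d/2}$, and the ratio tends to $0$.

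If $d=0$, then $P^m=\{m\}$ and the identity $C_t^m\delta^m_{K,t}(\tau)=g_\tau(m)$ holds trivially. Combining Steps 2 and 3 and letting $\epsilon\to0$ gives $\lim_{t\to\infty}C_t^m\delta_{K,t}^m(\tau)=C^m\delta^m(\tau)$.
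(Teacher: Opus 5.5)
Your proposal is correct and follows essentially the same route as the paper. You reduce to an integral over $P^m$ via the fibre decomposition of $\vol_K^m$ (Proposition \ref{decomp-vol}, Lemma \ref{lem-fib-int}), then use Laplace-type concentration at $m$ (the paper's Lemma \ref{conv-refine}) together with the normalization \eqref{rescale}. Your Step 3 is in fact more careful than the paper's proof of Lemma \ref{conv-refine}: the compactness bound $f_m\circ\iota_Q^*\le-\eta$ off $U$ and the cone estimate for $\mathrm{Vol}(P^m\cap B_{t^{-1/2}}(m))$ correctly handle $m$ lying on the boundary of $P^m$. The paper glosses over this case by bounding below with $\mathrm{Vol}(B_r(x_0))$ and applying the local quadratic bound globally.
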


\begin{proof}
Recall that $\{\sigma^{m}\}_{m\in P\cap \mathfrak{t}_{\ZZ}^{*}}$ denotes a basis of the quantum Hilbert space $\shH_{J}$ associated with $\shP_{J}$ such each each $\sigma^{m}$ is of weight $m$. Also recall from \cite[Theorem~3.7]{LW3} that the family $\{\sigma_{t}^{m}:=e^{t\hat{\varphi}}\sigma^{m}\}_{m\in P\cap \mathfrak{t}_{\ZZ}^{*}}$
is a basis of the quantum Hilbert space $\shH_{t}$ associated with
$\shP_{t}$. Now, by \eqref{dis-basis2}, we obtain
\begin{align*}
\left( \lim_{t \rightarrow \infty}{C_{t}^{m}}\delta_{K,t}^{m} \right)(\tau)
=\lim_{t \rightarrow \infty}{C_{t}^{m}} \int_{\mathring{M}^{m}}
\langle \sigma_{t}^{m}, \tau \rangle  \vol_{K}^{m}.
\end{align*}
Using the principal bundle structure of $\mu \vert_{\mathring{M}^{m}}: \mathring{M}^{m} \to \mathring{P}^{m}$ (see Lemma \ref{prin-bdl}) and the decomposition of the volume form $\vol_{K}^{m}$ in Proposition \ref{decomp-vol}, we may express the integral as an iterated integral over the base $P^{m}$ and the torus fiber:
\begin{align*}
    \int_{\mathring{M}^{m}} \langle \sigma_{t}^{m}, \tau \rangle  \vol_{K}^{m} = & \int_{\mathring{M}^{m}} e^{t \mu_{Q}^*\left( \phi - \nabla \phi \cdot (x - \iota_Q^*(m)\right)} \langle \sigma^{m}, \tau \rangle \cdot \vol_{\widetilde{T}} \wedge (\mu \vert_{\mathring{M}^{m}})^*(\vol_{P^{m}})\\
    = & \int_{\mathring{P}^{m}} e^{t \left( \phi - \nabla \phi \cdot (x - \iota_Q^*(m)\right) \circ \iota_{Q}^*} \left( \int_\mu \langle \sigma^{m}, \tau \rangle \right) \vol_{P^{m}}\\
    = & \int_{P^{m}} e^{t \left( \phi - \nabla \phi \cdot (x - \iota_Q^*(m)\right) \circ \iota_{Q}^*} \left( \int_\mu \langle \sigma^{m}, \tau \rangle \right) \vol_{P^{m}}.
\end{align*}
Here, $\int_\mu \langle \sigma^{m}, \tau \rangle$ is defined in \eqref{eqn-fib-int}, and is continuous on $P^{m}$. Similarly,
\begin{align*}
(C_{t}^{m})^{-1} = & \int_{\mathring{M}^{m}} e^{t \mu_Q^*\left( \phi - \nabla \phi \cdot (x - \iota_Q^*(m)\right)} \vol_{K}^m\\
= & \int_{P^{m}} e^{t \left( \phi - \nabla \phi \cdot (x - \iota_Q^*(m)\right) \circ \iota_Q^*} \vol_{P^{m}} =: \Bigl\| e^{t \left( \phi - \nabla \phi \cdot (x - \iota_Q^*(m)\right) \circ \iota_Q^*} \Bigr\|_{1}.
\end{align*}

By transversality, the restriction $\iota_Q^* |_{P^{m}}: P^{m} \to \fot_{Q}^*$ is injective, so the restriction of
\[
    \left( \phi - \nabla \phi \cdot (x - \iota_Q^*(m)\right) \circ \iota_Q^*
\]
to a neighbourhood of polytope $P^{m}$ in its affine hull has a unique maximum at $m \in P^{m}$ and $m$ is a non-degenerate critical point. Applying Lemma \ref{conv-refine} and \eqref{rescale} then yields
\begin{align*}
    \left(\lim_{t \rightarrow \infty}{C_{t}^{m}}\delta_{K,t}^{m} \right)(\tau) = & 
    \lim_{t \rightarrow \infty} \int_{P^{m}} \frac{e^{t \left( \phi - \nabla \phi \cdot (x - \iota_Q^*(m)\right) \circ \iota_{Q}^*}}{\Bigl\| e^{t \left( \phi - \nabla \phi \cdot (x - \iota_Q^*(m)\right) \circ \iota_{Q}^*} \Bigr\|_{1}} \left( \int_\mu \langle \sigma^{m}, \tau \rangle \right) \vol_{P^{m}}\\
    = & \left( \int_\mu \langle \sigma^{m}, \tau \rangle \right) (m)\\
     = & | \sigma^m |(m) \cdot \delta^m(\tau),
\end{align*}
which establishes weak convergence.
\end{proof}

\phantomsection

\appendix

\section{Lemmas for the proof of Theorem \ref{thm-hlim}}
\label{appendix}

Let $m \in P \cap \fot^{*}_{\ZZ}$. The following lemma holds.

\begin{lemma}\label{prin-bdl}
    There exists a unique subtorus $H < T^{n}$ with the following properties:
    \begin{itemize}
        \item $\mathring{M}^{m}$ is precisely the set of points in $M^{m}$ whose $T^{n}$-stabilizer equals $H$;
        \item $H$ is contained in the $T^{n}$-stabilizer of every point of $M^{m}$.
    \end{itemize}
    Consequently, $\mu \vert_{\mathring{M}^{m}}: \mathring{M}^{m} \to \mathring{P}^{m}$ is a principal $T^{n} / H$-bundle.
\end{lemma}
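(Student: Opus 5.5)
The plan is to reduce everything to the combinatorics of the polytope $P$ and its faces, using the standard description of stabilizers in a symplectic toric manifold. For $y \in M$ with $\mu(y)$ in the relative interior of a face $F$ of $P$, the $T^{n}$-stabilizer of $y$ is the subtorus $H_F$ whose Lie algebra is spanned by the normals $\nu_r$ of the facets containing $F$. In particular, the stabilizer depends only on the open face containing $\mu(y)$. Moreover, $F \subset F'$ implies $H_{F'} \subset H_F$. Both facts follow from the Delzant local model around a point of $\mu^{-1}(F)$.

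First, identify the relevant face. $P^{m} = P \cap A$ with $A = (\iota_K^*)^{-1}(\iota_K^*(m))$ an affine subspace. Let $F_0$ be the smallest face of $P$ containing $P^{m}$. I claim that $\mathring{P}^{m}$ lies in the relative interior of $F_0$, and that points of $P^{m} \setminus \mathring{P}^{m}$ lie in faces that are proper faces of $F_0$ or at least contained in $F_0$. Indeed, the faces of the convex set $P^{m}$ are exactly the sets $P^{m} \cap G$ for faces $G$ of $P$. A point of the relative interior of $P^{m}$ lies in a unique open face $\mathring{G}$ of $P$. The face $G \cap A$ of $P^{m}$ then contains a relative interior point of $P^{m}$, so it is all of $P^{m}$. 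Hence $P^{m} \subset G$, which forces $G \supset F_0$. Conversely, $G \subset F_0$ because the point lies in $F_0$ and in the open face $\mathring{G}$. So $G = F_0$.

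Next, a point of $P^{m}$ not in $\mathring{P}^{m}$ lies in a proper face of $P^{m}$. That face has the form $P^{m} \cap G'$ with $G' \cap F_0$ a proper face of $F_0$; otherwise $P^{m} \cap G'$ would be all of $P^{m}$. Set $H := H_{F_0}$. By the monotonicity above, every point of $M^{m} = \mu^{-1}(P^{m})$ has stabilizer containing $H$, which gives the second bullet. Points over $\mathring{P}^{m}$ have stabilizer exactly $H$. Points over the relative boundary lie over a strictly smaller face, so their stabilizer is strictly bigger, since the dimension of $H_F$ equals the codimension of $F$. This gives the first bullet. Uniqueness is immediate: $H$ is the stabilizer of any point of $\mathring{M}^{m}$, which is nonempty.

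Finally, for the bundle statement, $\mu^{-1}(\mathring{F}_0) \to \mathring{F}_0$ is a principal $T^{n}/H_{F_0}$-bundle. This comes from the local normal form: over the open face, $\mu^{-1}(\mathring{F}_0)$ is a symplectic toric manifold with free $T^{n}/H$-action and moment map a submersion onto $\mathring{F}_0$, with the fibres being single orbits. Restricting this bundle to the submanifold $\mathring{P}^{m} \subset \mathring{F}_0$ gives the claimed principal $T^{n}/H$-bundle $\mathring{M}^{m} \to \mathring{P}^{m}$.

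The main obstacle is the convex-geometric step, namely showing that the relative interior of $P^{m} = P \cap A$ lies in a single open face of $P$ and that the relative boundary lies over strictly smaller faces. I would handle it with the face correspondence for intersections with affine subspaces described above, and be careful about the degenerate case where $P^{m}$ is a single point.
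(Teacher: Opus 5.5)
Your proposal is correct and follows essentially the same route as the paper. You take the smallest face $F$ of $P$ containing $P^{m}$, show $\mathring{P}^{m} = P^{m} \cap \mathring{F}$, read off the stabilizers from the orbit--face correspondence, and obtain the principal $T^{n}/H$-bundle over $\mathring{P}^{m}$. The only minor difference is in the convex-geometric step: you prove the relative-interior identity using the fact that faces of $P \cap A$ are traces of faces of $P$, whereas the paper's Lemma~\ref{rel-int} argues with affine hulls and by extending a segment past a point on a facet of $F$.
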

\begin{proof}
    Let $F$ be the intersection of all faces of $P$ containing $P^{m}$; then $F$ is itself a face of $P$. Denote the relative interior of $F$ by $\mathring{F}$.
    By Lemma \ref{rel-int} (to be proved below), the relative interior $\mathring{P}^{m}$ of $P^{m}$ satisfies
    $$\mathring{P}^{m} = P^{m} \cap \mathring{F}.$$
    By the orbit–face correspondence, points of $M$ lying over $\mathring{F}$ have minimal $T^{n}$-stabilizer among all points lying over $F$. It follows that $\mathring{M}^m = \mu^{-1}(\mathring{P}^{m})$ consists exactly of those points in $M^{m} = \mu^{-1}(P^{m})$ whose $T^{n}$-stabilizer is minimal. In particular, this stabilizer is constant on $\mathring{M}^{m}$; denote it by $H$. By minimality, $H$ is contained in the stabilizer of every point of $M^{m}$, and uniqueness is immediate.

    Since the $T^{n}$-action on $\mathring{M}^{m}$ has constant stabilizer $H$, the induced action of $T^{n}/H$ is free. Therefore, $\mu \vert_{\mathring{M}^{m}}$ is a principal $T^{n} / H$-bundle.
\end{proof}

\begin{lemma}\label{rel-int}
    Let $P^{m}, \mathring{P}^{m}$ and $\mathring{F}$ be as above. Then $\mathring{P}^{m} = P^{m} \cap \mathring{F}$.
\end{lemma}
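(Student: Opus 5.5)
We need to prove that $\mathring{P}^{m} = P^{m} \cap \mathring{F}$, where $P^{m} = P \cap A$ with $A := \{x : \iota_K^*(x) = \iota_K^*(m)\}$ an affine subspace, and $F$ is the smallest face of $P$ containing $P^{m}$. The argument is convex geometry: we use the facet description of $P$ by the inequalities $l_r \ge 0$ from \eqref{eqn:l_r}, and we prove the two inclusions separately.

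First, we write $F = \{x \in P : l_r(x) = 0 \text{ for } r \in I\}$, where $I$ is the set of indices $r$ with $l_r \equiv 0$ on $P^{m}$. This holds because every face of $P$ is cut out by setting some of the $l_r$ to zero, so the faces containing $P^{m}$ are exactly those whose defining index set lies in $I$, and their intersection is the face with index set $I$. Then $\mathring{F} = \{x \in F : l_r(x) > 0 \text{ for } r \notin I\}$. Consequently
\[
P^{m} \cap \mathring{F} = \{x \in P^{m} : l_r(x) > 0 \ \forall r \notin I\}.
\]
For each $r \notin I$, $l_r$ is not identically zero on $P^{m}$, so there is a point $x_r \in P^{m}$ with $l_r(x_r) > 0$. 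Averaging these points gives $\bar{x} \in P^{m}$ with $l_r(\bar{x}) > 0$ for all $r \notin I$.

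For the inclusion $\mathring{P}^{m} \subset P^{m} \cap \mathring{F}$, take $x \in \mathring{P}^{m}$ and $r \notin I$. Since $x$ lies in the relative interior, the point $x + \epsilon(x - \bar{x})$ stays in $P^{m}$ for small $\epsilon > 0$. Hence $0 \le l_r(x + \epsilon(x - \bar{x})) = (1+\epsilon)l_r(x) - \epsilon l_r(\bar{x})$. This gives $l_r(x) \ge \tfrac{\epsilon}{1+\epsilon} l_r(\bar{x}) > 0$, so $x \in \mathring{F}$.

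For the reverse inclusion, take $x \in P^{m}$ with $l_r(x) > 0$ for all $r \notin I$. Note that $\mathrm{aff}(P^{m}) \subset A \cap \{l_r = 0 : r \in I\}$. Let $y$ be a point of $\mathrm{aff}(P^{m})$ near $x$. Then:
\begin{itemize}
\item for $r \in I$, $l_r(y) = 0$ by this containment;
\item for $r \notin I$, $l_r(y) > 0$ by continuity.
\end{itemize}
Hence $y \in P \cap A = P^{m}$. So a neighborhood of $x$ in $\mathrm{aff}(P^{m})$ lies in $P^{m}$, which means $x \in \mathring{P}^{m}$.

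The main obstacle is the first identification: that the minimal face containing $P^{m}$ has the stated index description and that its relative interior is given by strict inequalities for $r \notin I$. We handle this with the standard facts that faces of $P$ correspond to index sets of vanishing $l_r$, and that relative interiors of faces are where the remaining $l_r$ are positive. The degenerate case where $P^{m}$ is a single point (for instance $m$ a vertex) is covered by the same argument, since the relative interior of a point is the point itself.
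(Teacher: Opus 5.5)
Your proof is correct and follows essentially the same route as the paper. Both arguments prove the inclusion $P^{m}\cap\mathring{F}\subset\mathring{P}^{m}$ by local openness inside $\operatorname{Aff}(P^{m})$, using $\operatorname{Aff}(P^{m})\subset\{\iota_K^*(x)=\iota_K^*(m)\}$, and prove $\mathring{P}^{m}\subset\mathring{F}$ by extending a segment of $P^{m}$ slightly beyond $x$ to violate a defining inequality of $F$. The difference is only in presentation: you work directly with the facet inequalities $l_r$, the index set $I$ and an averaged point $\bar{x}$, while the paper argues coordinate-free and by contradiction, using a facet $F'$ of $F$ and a supporting affine function.
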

\begin{proof}
    First, we claim that
    $$P^{m} = \operatorname{Aff}(P^{m}) \cap F \subset \operatorname{Aff}(F),$$
    where $\operatorname{Aff}(P^{m})$ and $\operatorname{Aff}(F)$ denote the affine hulls of $P^{m}$ and $F$, respectively. The reason is as follows. Clearly, $P^{m} \subset \operatorname{Aff}(P^{m}) \cap F$. On the other hand, $P^{m}$ is contained in the affine subspace $\{ x \in \mathfrak{t}^*: \iota_K^*(x) = \iota_K^*(m) \}$, hence $\operatorname{Aff}(P^{m}) \subset \{ x \in \mathfrak{t}^*: \iota_K^*(x) = \iota_K^*(m) \}$. Then
    \[
        \operatorname{Aff}(P^{m}) \cap F \subset \{ x \in \mathfrak{t}^*: \iota_K^*(x) = \iota_K^*(m) \} \cap P = P^{m},
    \]
    therefore our claim holds.
    
    Now, we show that $P^{m} \cap \mathring{F} \subset \mathring{P}^{m}$. If $x \in P^{m} \cap \mathring{F}$, then there exists an open set $B \subset \operatorname{Aff}(F)$ with $x \in B \subset F$. It follows that $\operatorname{Aff}(P^{m}) \cap B$ is open in $\operatorname{Aff}(P^{m})$ and contained in $\operatorname{Aff}(P^{m}) \cap F = P^{m}$, hence $x \in \mathring{P}^{m}$.

    Finally, we prove $\mathring{P}^{m} \subset P^{m} \cap \mathring{F}$ by contradiction. If $x \in \mathring{P}^{m}$ and $x \not\in P^{m} \cap \mathring{F}$, then $x$ lies on a facet $F'$ of $F$. By the minimality of $F$, $P^m \not\subset F'$. Pick $x' \in P^m$ such that $x' \not\in F'$, and choose an affine function $f$ on $\operatorname{Aff}(F)$ and $c \in \mathbb{R}$ such that $f \leq c$ on $F$ and $F' = f^{-1}(\{c\}) \cap F$. Since $x' \in F \backslash F'$, $f(x') < c$. Now, define a vector $\epsilon = x' - x$. By the condition $x \in \mathring{P}^m$, we can choose a sufficiently small constant $\lambda > 0$ such that $x - \lambda \epsilon \in P^m$. However, since $f$ is an affine function, we must have $f(x - \lambda \epsilon) > c$, implying that $x - \lambda \epsilon \not\in F$. It contradicts the condition that $P^m \subset F$, concluding the proof.
\end{proof}

\begin{lemma}\label{conv-refine}
    Let $\triangle \subset \mathbb{R}^{\ell}$ be an $\ell$-dimensional polytope and $\psi$ be a smooth function on a neighbourhood of $\triangle$ with a unique minimum at $x_0 \in \triangle$ such that the critical point $x_0$ of $\psi$ is non-degenerate. For $t > 0$, define
    \begin{equation}
        \lVert e^{-t\psi} \rVert_{1} := \int_{\triangle} e^{-t\psi} dx,
    \end{equation}
    where $dx$ is the Euclidean volume form on $\mathbb{R}^{\ell}$. Then for any continuous function $f$ on $\triangle$,
    \begin{equation}
        \lim_{t \rightarrow \infty} \frac{1}{\lVert e^{-t\psi} \rVert_{1}} \int_{\triangle} f e^{-t\psi} dx = f(x_0).
    \end{equation}
\end{lemma}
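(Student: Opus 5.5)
The plan is a Laplace-type concentration argument. We localize near $x_0$ using continuity of $f$ and the uniqueness of the minimum. Replacing $\psi$ by $\psi-\psi(x_0)$ multiplies numerator and denominator by the same factor $e^{t\psi(x_0)}$, so we may assume $\psi(x_0)=0$ and $\psi>0$ on $\triangle\setminus\{x_0\}$. Since the quotient of $\int_\triangle e^{-t\psi}\,dx$ by itself is $1$, it suffices to show
\[
\frac{1}{\lVert e^{-t\psi}\rVert_1}\int_\triangle |f-f(x_0)|\,e^{-t\psi}\,dx\longrightarrow 0 .
\]

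First, fix $\varepsilon>0$ and use continuity of $f$ to choose $r>0$ with $|f-f(x_0)|<\varepsilon$ on $\triangle\cap B_r(x_0)$. Split the integral into the parts over $\triangle\cap B_r(x_0)$ and over $\triangle\setminus B_r(x_0)$. The near part contributes at most $\varepsilon$ after normalization, because its integrand is bounded by $\varepsilon\, e^{-t\psi}$. For the far part, compactness of $\triangle\setminus B_r(x_0)$ and uniqueness of the minimum give $\psi\ge c>0$ there. Hence the far part is at most $2\sup|f|\cdot\vol(\triangle)\, e^{-tc}$.

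The main step is a lower bound on the denominator that beats $e^{-tc}$, and this is where non-degeneracy and the geometry of $\triangle$ at $x_0$ enter. Since $x_0$ is a non-degenerate critical point and a minimum, the Hessian is positive definite, so Taylor's theorem gives $\psi(x)\le C|x-x_0|^2$ near $x_0$. (Only the upper quadratic bound is needed, together with $\psi$ being smooth on a neighbourhood of $\triangle$ so that Taylor applies even when $x_0\in\partial\triangle$.) Because $\triangle$ is an $\ell$-dimensional convex polytope, it contains a cone near $x_0$: there are $\rho>0$ and $\kappa>0$ with $\vol(\triangle\cap B_s(x_0))\ge\kappa s^\ell$ for $s\le\rho$. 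Concretely, take the convex hull of $x_0$ and a small ball inside $\triangle$ and scale it down. Then, for $t$ large,
\[
\lVert e^{-t\psi}\rVert_1\ge\int_{\triangle\cap B_{t^{-1/2}}(x_0)}e^{-tC|x-x_0|^2}dx\ge e^{-C}\kappa\, t^{-\ell/2}.
\]
The far part after normalization is therefore $O\bigl(t^{\ell/2}e^{-tc}\bigr)\to 0$.

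Combining the two pieces, the $\limsup$ of the normalized error is at most $\varepsilon$, and $\varepsilon$ was arbitrary, which proves the lemma. The only delicate point is the case $x_0\in\partial\triangle$, which the paper needs when $m$ is a boundary point of $P^m$. There the cone-volume estimate replaces the usual full-ball Gaussian estimate, and I expect verifying it cleanly to be the main obstacle. Precise asymptotics of the denominator are never needed, only the polynomial lower bound.
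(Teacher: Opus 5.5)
Your argument is correct and is essentially the paper's Laplace-type concentration argument: you split off a small ball around $x_0$, bound the far part by an exponentially small term, and bound $\lVert e^{-t\psi}\rVert_1$ from below using the quadratic upper bound on $\psi$ near $x_0$. The differences are minor. The paper uses a fixed radius $r$ with $\alpha r^2<c\varepsilon^2$, which gives an exponential lower bound on the denominator rather than your polynomial one. Your cone-volume estimate and your compactness bound $\psi\ge c>0$ on $\triangle\setminus B_r(x_0)$ also make precise two points the paper glosses over: it writes $\operatorname{Vol}(B_r(x_0))$ where $\operatorname{Vol}(\triangle\cap B_r(x_0))$ is needed when $x_0\in\partial\triangle$, and it applies the local lower quadratic bound on the whole far region.
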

\begin{proof}
    The proof is standard and follows the same argument as that of \cite[Lemma 3.7]{BFMN}. We include the details for completeness.

    Since $\psi$ is smooth and has a minimum at $x_0$, we know that $\nabla \psi (x_0) = 0$ and the Hessian $H(x_0) := D^2 \psi(x_0)$ is positive semidefinite. Using a second-order Taylor expansion around $x_0$:
    $$ \psi(x) = \psi(x_0) + \tfrac{1}{2} (x - x_0)^T H(x_0)(x - x_0) + o \lVert x - x_0 \rVert^2.$$
    Since $x_0$ is a non-degenerate critical point of $\psi$, the Hassian $H(x_0)$ is actually positive definite. So we have:
    $$\exists c_{min}, c_{max} > 0 \text{ such that } c_{min} \lVert x - x_0 \rVert^2 \leq (x - x_0)^T H(x_0) (x - x_0) \leq c_{max} \lVert x - x_0 \rVert^2.$$
    By the meaning of the $o (\lVert x - x_0 \Vert^2)$-term, there exist constants $c, \alpha > 0$ such that, for $x$ sufficiently close to $x_0$,
    \begin{equation}
        \label{inequ}
        \psi(x_0) + c \lVert x - x_0 \rVert^2 \leq \psi(x) \leq \psi(x_0) + \alpha \lVert x - x_0 \rVert^2.
    \end{equation}

    We show that the functions
    $$ \zeta_s := \frac{e^{-s\psi}}{\lVert e^{-s\psi} \rVert_{1}}.$$
    form a Dirac sequence. (We actually show convergence as measures on $\triangle$.) It is clear from the definition that $\zeta_s > 0$ and $\lVert \zeta_s \rVert _{1} = 1$, so it remains to show that the norms concentrate around the minimum, that is, given any $\varepsilon, \varepsilon' > 0$ we have to find a number $s_0$ such that
    $$\forall s \geq s_0: \int_{\Delta \cap B_\varepsilon(x_0)} \zeta_s(x) dx \geq 1 - \varepsilon'.$$
    Choose $r > 0$ such that inequality \eqref{inequ} holds for all $x \in B_r(x_0)$ and $- c \varepsilon^2 + \alpha r^2 < 0$. Observe that
    $$ \lVert e^{-s\psi} \rVert_{1} = \int_{\triangle} e^{-s\psi(x)} dx \geq \int_{\Delta \cap B_r(x_0)} e^{-s\psi(x)} dx \geq \operatorname{Vol}(B_r(x_0)) \cdot e^{-s\psi(x_0) - s\alpha r^2}.$$
    On the other hand,
    $$\int_{\triangle \backslash B_\varepsilon(x_0)} e^{-s\psi(x)} d x \leq \int_{\triangle \backslash B_\varepsilon(x_0)} e^{-s\psi(x_0) - sc \lVert x - x_0 \rVert^2} d x \leq \operatorname{Vol}(\triangle) \cdot e^{-s\psi(x_0) - c \varepsilon^2}.$$
    Therefore,
    $$ \int_{\triangle \backslash B_\varepsilon(x_0)} \zeta_s(x) d x \leq \frac{\operatorname{Vol}(\triangle) \cdot e^{- sc \varepsilon^2 + s\alpha r^2 }}{\operatorname{Vol}(B_r(x_0))}.$$
    The right hand side goes to zero as $s \to \infty$ and the result follows.
\end{proof}

\providecommand{\bysame}{\leavevmode\hbox to3em{\hrulefill}\thinspace}
\providecommand{\MR}{\relax\ifhmode\unskip\space\fi MR }
\providecommand{\MRhref}[2]{%
  \href{http://www.ams.org/mathscinet-getitem?mr=#1}{#2}
}
\providecommand{\href}[2]{#2}

\end{document}